\newtheorem{thm}{Theorem}[section]
\newtheorem{lem}[thm]{Lemma}
\newtheorem{cor}[thm]{Corollary}
\newtheorem{hyp}[thm]{Hypothesis}
\newtheorem{prop}[thm]{Proposition}
\theoremstyle{definition}
\newtheorem{rem}[thm]{Remark}
\newcommand{\bpict}{\begin{picture}}
\newcommand{\epict}{\end{picture}}
\newtheorem{rems}[thm]{Remarks}
\numberwithin{equation}{thm}
\newcommand{\wH}{{\widetilde H}}
\newcommand{\grExt}{\text{\rm ext}}
\newcommand{\grHom}{\text{\rm hom}}
\newcommand{\head}{\text{\rm head}}
\newcommand{\sR}{{\mathcal R}}
\newcommand{\Fr}{F}
\newcommand{\q}{{\bold q}}
\newcommand{\gr}{\text{\rm gr}}
\newcommand{\Ext}{{\text{\rm Ext}}}
\newcommand{\fa}{{\mathfrak a}}
\renewcommand{\arraystretch}{1.125}
\newcommand{\id}{\operatorname{Id}}
\newcommand{\Amod}{A\mbox{--mod}}
\newcommand{\Bmod}{B\mbox{--mod}}
\newcommand{\fbb}{{\mathfrak b}}
\newcommand{\Agrmod}{A\mbox{--grmod}}
\newcommand{\Hom}{\text{\rm Hom}}
\newcommand{\End}{\operatorname{End}}
\newcommand{\ch}{\operatorname{ch}}
\newcommand{\soc}{\operatorname{soc}}
\newcommand{\rad}{\operatorname{rad}}
\newcommand{\Jan}{\Gamma_{\text{\rm Jan}}}
\newcommand{\Res}{\Gamma_{\text{\rm res}}}
\newcommand{\Resreg}{\Gamma_{\text{\rm res,reg}}}
\newcommand{\Reg}{\Gamma_{\text{\rm reg}}}
\newcommand{\Gmod}{G{\text{\rm --mod}}}
\newcommand{\blist}{\begin{list}{\rom{(\roman{enumi})}}{\setlength
{\leftmarg in}{0em} \setlength{\itemindent}{7ex}
\setlength{\labelsep}{2ex}\setlength{\listparindent}{\parindent}
\usecounter{enumi}}}
\newcommand{\elist}{\end{list}}
\newcommand{\Image}{\text{\rm Im}}
\newcommand{\Forget}{\text{\rm Forget}}
\newcommand{\famod}{{\mathfrak a}{\text{\rm --mod}}}
\newcommand{\fagrmod}{{\mathfrak a}{\text{\rm --grmod}}}
\newcommand{\Extone}{{\rm Ext}^n_{A}(\Delta^r,L)}
\newcommand{\Exttwo}{{\rm Ext}^{n+1}_{A}(\Delta^{0,r},L)}
\newcommand{\Extthree}{{\rm Ext}^n_{\mathfrak{a}}\!(\Delta'^{r}\!,L)}
\newcommand{\Extfour}{{\rm Ext}^{n+1}_{\fa}(\Delta'^{0,r},L)}
\newcommand{\Extfive}{\displaystyle{\bigoplus_{s \geq r+n}}\!\! {\grExt}^n_{\mathfrak{a}}\!(\Delta'^{r}\!,L(s))}
\newcommand{\Extsix}{\displaystyle{\bigoplus_{s \leq (r-1)+n+1}}\!\!\!{\grExt}_{\mathfrak{a}}^{n+1}\!(\Delta'^{0,r}\!,L(s))}
\newcommand{\Extseven}{{\rm Ext}^n_{A}(\Delta^{r+1},L)}
\newcommand{\Exteight}{
{\rm Ext}^{n+1}_{A}(\Delta^{0,r+1},L)}
\newcommand{\Extnine}{
{\rm Ext}_{\mathfrak{a}}^{n}(\Delta'^{r+1},L)}
\newcommand{\Extten}{
{\rm Ext}^{n+1}_{\mathfrak{a}}(\Delta'^{0,r+1},L)}
\newcommand{\Exteleven}{
\displaystyle{\bigoplus_{s \geq r+1+n}}\!\!\!{{\grExt}}_{\mathfrak{a}}^{n}\!(\Delta'^{r+1}\!,L(s))}
\newcommand{\Exttwelve}{
\displaystyle{\bigoplus_{s \leq r+n+1}}\!\!\!{{ \grExt}}_{\mathfrak{a}}^{n+1}\!(\Delta'^{0,r+1}\!,L(s))}
\newcommand{\grAmod}{{\text{\rm gr}}A{{\text{\rm --mod}}}}
\dedicatory{We dedicate this paper to the memory of Ed Cline.}
\begin{document}

 \title[A new approach to the Koszul property in representation theory]{A new approach to the Koszul property in representation theory using graded subalgebras}
\author{Brian J. Parshall}\address{Department of Mathematics \\
University of Virginia\\
Charlottesville, VA 22903} \email{bjp8w@virginia.edu {\text{\rm
(Parshall)}}}
\author{Leonard L. Scott}
\address{Department of Mathematics \\
University of Virginia\\
Charlottesville, VA 22903} \email{lls2l@virginia.edu {\text{\rm
(Scott)}}}

\thanks{Research supported in part by the National Science
Foundation}

\subjclass{Primary 17B55, 20G; Secondary 17B50}

\medskip
\medskip
\medskip \maketitle

\begin{abstract} Given a quasi-hereditary algebra $B$, we present conditions which guarantee that the
algebra $\gr B$ obtained by grading $B$ by its radical filtration is Koszul and at the same time inherits the quasi-hereditary
property and other good Lie-theoretic properties that $B$ might possess. The
method involves working with a pair $(A,\fa)$ consisting of a quasi-hereditary algebra $A$ and a (positively) graded subalgebra $\mathfrak a$. The algebra $B$ arises as a quotient $B=A/J$ of $A$ by a defining ideal $J$ 
of $A$. Along the way, we also show that
the standard (Weyl) modules for $B$ have a structure as graded modules for $\fa$. These results are applied to obtain new information about the
finite dimensional algebras (e.~g., the $q$-Schur algebras) which arise as quotients of quantum enveloping algebras. Further applications, perhaps the most penetrating, yield
 results for the
finite dimensional algebras associated to semisimple algebraic groups in positive characteristic $p$. These
results require, at least presently, considerable restrictions on the size of $p$. 
\end{abstract}


\section{Introduction}  In the study of rational representations of a reductive group $G$ over an
algebraically closed field $K$ of characteristic $p>0$, the finite
dimensional
modules with composition factors having highest weights in some fixed finite saturated
set (poset ideal) $\Gamma$ of dominant weights identify (by means of a category equivalence) with the finite
dimensional modules for a finite
dimensional algebra $B$. The homological algebra of $B$ closely parallels
that of $G$; for example, given $M,N\in \Bmod$, $\Ext^\bullet_B(M,N)\cong
\Ext^\bullet_G(M,N)$, where, on the right-hand side, $M,N$ are identified with rational $G$-modules. By
varying $\Gamma$, the representation theory of $G$ can be largely recaptured from that of the algebras $B$.

The algebras $B$ are examples of quasi-hereditary
algebras, which have been extensively studied in their own right, but often with an eye toward
applications to representation theory.  When  $G=GL_n$, the famous Schur algebras arise
 this way \cite{GES}. The representation/cohomology
theory of a (Lusztig) quantum enveloping algebra $U_\zeta$ at a root of unity also can be studied by
means of similarly defined finite dimensional algebras. The $q$-Schur algebras of
Dipper-James \cite{DJ} (see also  \cite{Donkin}, \cite{DDPW}  for exhaustive treatments) are the most
 well-known examples.

This paper presents new results on the structure of the graded algebras 
$$ \gr B :=\bigoplus_{n\geq 0}\rad^nB/\rad^{n+1}B$$
 attached to some of the important algebras $B$ alluded to in the previous
two paragraphs.   One
dominant theme of our own work, often with Ed Cline,
has been a search for Koszul structures in the quasi-hereditary
algebras
 of interest in modular representation theory of algebraic groups. Koszul
algebras
(see  \cite{Priddy}, \cite{BGS},  and also \cite {PS0}) have very strong
homological properties, intertwined
with very strong grading properties. In particular,
a Koszul algebra $B$ is positively graded, and
$B\cong \gr B$. However, it is very difficult, in general, to obtain a good
positive grading
on an algebra $B$, or establish an isomorphism $B\cong \gr B$. Indeed, it is possible
for a finite dimensional algebra to enjoy many standard Lie-theoretic
properties, including all the known homological consequences of Koszulity, yet fail to be Koszul or even to have a positive grading consistent 
with its radical filtration, see \cite[3.2]{CPS5} for a specific example. Fortunately,
in the example given there, the algebra $\gr B$ is Koszul. With this and other insights from \cite{CPS5}
as a starting point, this paper considers the
general question of what good properties
 present in  a finite dimensional algebra $B$ imply that
the graded algebra
 $\gr B$ is Koszul, requiring at the same time that $\gr B$ keep these good
properties (e.~g., quasi-heredity, parity
 conditions,...). Of course, in many circumstances, results about $\gr B$
can then be translated back to $B$.
    Although our methods are entirely algebraic (in the classical sense), the reader may be reminded
of Mumford's notion of ``projective methods" in \cite[p. 137]{Mumford}, where he viewed
the process of forming graded algebras from filtrations (and extracting
useful information from the process) one of the cornerstones of his view of
algebraic geometry. The idea has, of course, occurred to classical
algebraists, too, but it has  been very difficult to say much about $
\gr B$. This is still true of the results in this paper, which often require
 difficult proofs, though we do provide a general framework for
them.

One new ingredient in this general framework   is the use
(when they exist) of  Koszul subalgebras. This 
approach is developed in detail in Part I, where it forms the major theme.  The reader is referred to the introduction of Part I for
a more detailed discussion. Although $B$ itself does often have a Koszul subalgebra, it turns out to be more
flexible to require that such a subalgebra lies inside an algebra $A$ which has $B$ has a homomorphic
image. The basic ingredients in this setup include
 the following: 

\begin{itemize}
\item[(1)] $B$ is a quasi-hereditary algebra and $\Bmod$ has an abstract Kazhdan-Lusztig theory in
the sense of \cite{CPS-1}.
\item[(2)] $B=A/J$ is the quotient of a quasi-hereditary algebra $A$ by a certain defining ideal $J$.  Thus, $B$ has weight poset $\Gamma$, a poset ideal  in the weight poset $\Lambda$ of $A$;
\item[(3)] $A$ has a Koszul subalgebra $\fa$ such that $(\rad \fa)A=\rad A$.
\end{itemize}
The goal, carried out in \S\S6,7, under additional hypotheses, is to show that  $\gr B$ is Koszul, and that a large list of properties of $B$
transfer to $\gr B$. These include the properties of being quasi-hereditary, and of having a module category with an abstract Kazhdan-Lusztig theory.   Furthermore,
$B$ and $\gr B$ share the same Kazhdan-Lusztig polynomials and have the same homological
dual $B^!:=\Ext^\bullet_{B}(B/\rad B, B/\rad B)$. The multiplicities $[\rad^n \Delta(\lambda)/\rad^{n+1}\Delta(\lambda):L(\nu)]$ of irreducible modules $L(\nu)$ in the radical layers of standard (Weyl) modules $\Delta(\lambda)$ for $B$ agree
with the corresponding multiplicities for $\gr B$, and the latter can be calculated as coefficients of
inverse Kazhdan-Lusztig polynomials. In fact, the graded module category of $\gr B$ has the very strong
property of having a graded Kazhdan-Lusztig theory in the sense of \cite{CPS-1}, and the calculation
of graded multiplicities is one of its many consequences.

Another result is the elementary Proposition \ref{KosFact},
which takes $B=A$ (with no quasi-heredity or Kazhdan-Lusztig theory assumption) and proves Koszulity directly for $\gr B$ under simple hypotheses on the subalgebra $\fa$. While these hypotheses do 
not hold for our applications in Part II, the result provides an interesting model, and should at least be of interest
to mathematicians working with finite dimensional algebras.

In our applications, starting with the finite dimensional algebra $B$ associated to an quantum enveloping
algebra or a simple simply connected group, quite a bit of effort is required to properly choose the
pair $(A,\fa)$ with all the required properties (which include (1) -- (3) above as well as  further
homological conditions). This project is carried out in Part II. For example, the Koszulity of the subalgebra $\fa$
comes about from work on Andersen-Jantzen-Soergel \cite{AJS} (and more recently Riche \cite{Riche}).
See the introduction to Part II for more discussion.

The proofs in this paper have independent interest, apart from the results themselves, because of the algebraic
methods involved.\footnote{Using geometric methods, quoting results asserted in \cite{ABG}, it seems likely that the finite dimensional algebras $B$ studied in the quantum case (see \S8 in Part II),
as well as the algebras $\gr B$, can be shown to be Koszul in the regular weight case. However, it seems first necessary to complete some of the arguments in \cite{ABG}. We will return to this in \cite[\S7]{PS6}. These geometric methods are insufficient, however, to obtain the positive characteristic results of \S10. Even in the quantum
case, simply knowing Koszulity of $B$ does not imply any connection with the Koszul algebra
structure  of
$\fa$. In our approach, this is built in, and provides an important theme for this paper and subsequent
work \cite{PS5}. {\it Added Oct. 7, 2011:}  Recently, Shan-Vasserot-Varagnolo (arXiv: 1107.0146) have proved a parabolic-singular duality theorem in the affine Lie algebra case which shows, at least, that the $q$-Schur
algebras are Koszul.  It also likely implies (using \cite[Appendix, \S9]{PS6}) that the algebras $B$ above (of \S8, Part II) are Koszul, even
in the singular case. The proofs are a mixture of algebraic and geometric methods. 
}
In addition, this paper lays the groundwork for several sequels. For example, it plays an
essential role in \cite{PS6}, giving new results on filtrations of $q$-Specht modules for
Hecke algebras.  Similarly, some of the theorems and methods of this paper are used in obtaining new
graded quasi-hereditary algebras over DVRs in \cite{PS5.5}, which in turn are essential in applications to
$p$-filtrations for Weyl modules of algebraic groups planned in \cite{PS5}.

Recently, there has been considerable activity regarding graded algebras. See \cite{BS}, \cite{Turner1},
and \cite{BGS}, together with their references.  All these papers deal with either Koszul or
at least positively graded algebras. For the $\mathbb Z$-graded case, see, for example, \cite{BK}
and its references. 
Our studies here are independent of these works, as is, to some extent, the main thread of our interests.  The authors have had a long involvement in the study of positively graded quasi-hereditary
algebras and corresponding homological questions, see, e.~g., \cite{CPS1a}, \cite{CPS1}, \cite{CPS2}, 
\cite{CPS5}, \cite{PS0}, \cite{PS1}. These papers were part of a general program to understand in an algebraic way some
of the geometric ingredients which led to a proof of the Kazhdan-Lusztig conjecture \cite{BK}, \cite{BB},
and the positive characteristic Lusztig conjecture for large primes \cite{AJS}, \cite{KL}, and \cite{KT}.  Some
of our work was inspired by a preprint of Beilinson-Ginzburg on Koszul algebras, an early version of
\cite{BGS}, which had algebraic consequences beyond the Kazhdan-Lusztig conjecture.  Our efforts hit an obstacle, however, in paper \cite{CPS5}, when we discovered the example \cite[3.2]{CPS5}
mentioned above.  The approach of this paper in some sense pushes beyond that obstacle, and raises 
the prospect of further progress in this program.

We heartily thank the referee for suggestions on improvement of the manuscript. In particular, the
referee suggested changing the original title (``New graded methods in representation theory") to
a more descriptive one. We have tried to do this. As discussed above, either title refers to the process of transferring good properties of $B$ to the graded algebra $\gr B$ (e.~g., quasi-heredity), and then working with this positively graded
algebra to go further.  The existence of the subalgebra $\fa$ makes it possible for this line of attack 
to succeed.  

\vskip.2in
\begin{center}{{\large\bf Part I: Theory}}\end{center}
Part I, which consists of \S\S 2---7, develops some new theoretical techniques for studying the representation
theory of certain finite dimensional algebras (all taken over some fixed based field $K$). As a guide to the reader, we now describe
the approach in Part I in more detail.  In this discussion, and in Part I itself,  $A$ often
denotes a general finite dimensional algebra, which could be either the algebra $A$ or $B$ above, or
even, in some cases, $\fa$.

 As noted in the
introduction, given a finite
dimensional algebra $A$, the positively graded algebra $\gr\,A$ is formed simply by using the
radical series of $A$; see (\ref{natiuralisofortightlygraded}) below. Given a finite dimensional $A$-module $M$, there is an analogous finite dimensional
graded $\gr\,A$-module $\gr\, M$; see (\ref{grMODULE}) below. \S2 introduces this and related elementary
concepts.  If $A$ 
is a (positively) graded algebra, and if $X,Y$ are graded $A$-modules, then the extension groups $\Ext^n_A(X,Y)$, $n=0,1,2,\cdots$, are
computed in the category $A$--mod of {\it ungraded} $A$-modules, while the extension groups $\grExt^n_A(X,Y)$, $n=0,1,2,\cdots$,
are computed in the category $A$--grmod of {\it graded} $A$-modules. (This notation is taken from the
standard reference \cite{BGS}.)   Connecting
the graded ext-groups to the ungraded Ext-groups is easy; just use the simple formula
(\ref{Extgrouprelationstograded}) below.  

Unfortunately, the functor, $\gr:\Amod\to\grAmod$, $M\mapsto\gr M$, from $A$-modules to $\gr A$-modules is not exact. Nevertheless, it is sometimes possible to extend the functor $\gr$ so that it is defined on
extensions, i.~e., elements of $\Ext^1_A(M,N)$. This process, which we call the ``$\gr$-construction"
is studied in \S3.  As far as we know, this idea and the results of this section are completely new.  

Some explanation of the notation in these first sections is required, since the results are
often cast in terms of a finite dimensional algebra denoted $A$ and sometimes in terms of a finite
dimensional algebra denoted $\fa$. The reason for this apparent dichotomy is that, for later applications,
$\fa$ will often be a (very special) subalgebra of $A$. We will need the results of \S3 for both $\fa$
and $A$, and we have tried to state results for the algebra (i.~e., for $\fa$ or $A$) for which the
later application is relevant.

Usually, $\fa$ will be a normal subalgebra of $A$. This means simply that $\fa$ is an augmentated
algebra with augmentation ideal $\fa_+$ such that $A\fa_+=\fa_+A$ (necessarily
a two-sided ideal of $A$). In this case, the quotient algebra $A/\fa_+A$ is usually denoted
$A//\fa$. 

In later results, $\fa$ will also be positively graded, tightly graded (i.~e., $\fa\cong\gr\fa$),
or (especially) Koszul.  The basic
conceptual underpinning of this work aims to study the representation theory of a finite
dimensional algebra $A$ (often with a focus on a quotient algebra $B$) which has a normal Koszul subalgebra $\fa$ which is large in the sense
that $(\rad\,\fa) A=\rad A$. In particular, this means that $A//\fa$ is semisimple. Various other conditions
will be imposed on the pair $(A,\fa)$, but as we see in Part II, these conditions can be verified
in applications.

From \S4 on, the algebra $A$ is often taken to be quasi-hereditary in the sense of \cite{CPS-1}. A more
recent account of the basic material on quasi-hereditary algebras is contained in \cite[Appendix C]{DDPW}.  
The section lays
out the essential parity conditions---originally developed in \cite{CPS1}, \cite{CPS2}, and \cite{CPS5}---that later play an important role.  The most basic condition requires that, if $A$ is a quasi-hereditary
algebra with weight poset $\Lambda$ having a ``length" function $l:\Lambda\to\mathbb Z$, then, stating
just one of the two dual conditions (see \S4),  
  $$\leqno{{\text{\rm (KL)}}} \,\,\,\,\,\Ext^n_A(\Delta(\lambda),L(\nu))\not=0\implies l(\lambda)-l(\nu)\equiv n\,\,\text{\rm mod}\,2,\quad \forall \lambda,\nu\in \Lambda, n\in\mathbb N.$$
 This (KL) condition is a special case of a much stronger condition 
$$\leqno{\text{\rm (SKL$'$)}}\,\,\,\,\, \Ext^n_A(\rad^i\Delta(\lambda), L(\nu))\not=0\implies l(\lambda)-l(\nu)-i\equiv n\,\,{\text{\rm mod}}\,\,2,\quad\forall \lambda,\nu\in\Lambda, n,i\in\mathbb N. $$
In these expressions, $\Delta(\lambda)$ (resp., $L(\nu)$) is the standard (resp., irreducible) module
associated to the weight $\lambda$ (resp., $\nu$) in $\Lambda$. As indicated in \S4, there are dual versions of
these two conditions, involving the irreducible modules $L(\nu)$  and costandard modules $\nabla(\lambda)$, which are automatic when the above versions hold and the category $\Amod$ has a duality.\footnote{A duality
on an abelian category $\mathcal C$ is a contravariant equivalence ${\mathfrak d}:{\mathcal C}
\to{\mathcal C}$. When $L\in\mathcal C$ is irreducible, we usually require ${\mathfrak d}L\cong L$.
In categories which allow for ${\mathbb Z}$-gradings of objects, we require instead that 
${\mathfrak d}L\cong L$ when $L$ is irreducible of pure grade 0. Also assume, if $M$ is any graded object, then
${\mathfrak d}M(n)=({\mathfrak d}M)(-n)$,
  where $M \mapsto M(n)$
is the ``shift" operator for the graded structure (given in our context by
$M(n)_i =M_{n-i}$, for integers $n, i$).}
  Of course, for an arbitrary quasi-hereditary algebra, the properties (KL) or (SKL$^\prime$) may not hold.  In fact, the
validity of (KL), or of (SKL$'$), has very strong consequences. For example, in standard examples involving
semisimple algebraic groups in positive characteristic, condition
(KL) is equivalent to the validity of the Lusztig character formula; see \cite{CPS1}. Also, a main result, proved in 
\cite{CPS5}, shows that when  (SKL$'$) holds, then the algebra $\gr A$ is Koszul, quasi-hereditary, and retains the parity properties enjoyed by $A$, as well as its Kazhdan-Lusztig polynomials. The algebra $\gr A$ has
a graded Kazhdan-Lusztig theory in the sense of \cite{CPS-1}.

\S6 concerns a pair $(A,\fa)$ in which $A$ is quasi-hereditary with weight poset $\Lambda$ and $\fa$
is a tightly graded subalgebra. The conditions on this pair are laid out at the start of \S6. (The material
in \S5 is a technical digression, capitalizing on the fact that, in our applications, the algebras are homomorphic
images of certain Hopf algebras.)   
Theorem \ref{MainInfinitesimalThm} establishes that certain of the standard
modules $\Delta(\lambda)$ for $A$, as well as other $A$-modules, have a natural structure as graded $\fa$-modules, generated by
their terms in grade 0. The main tool in establishing this important result is the difficult
 Lemma \ref{bigdiagramlemma}, which inductively approaches $\fa$-gradings one radical layer at a time. Theorem \ref{grBisQHA}  establishes that, if $\Gamma$ is a poset
 ideal in $\Lambda$ and if $B=A/J$ is the associated quotient quasi-hereditary algebra,  then
 $\gr B$ is quasi-hereditary (subject to a condition on $\Gamma$).

Finally, \S7 is the heart of Part I.  In the main result, Theorem \ref{Koszultheorem}, we have a pair $(A,\fa)$
as above, but with $\fa$ Koszul. There, $B$ is a quasi-hereditary quotient algebra of $A$ associated to
a poset ideal $\Gamma$. It is assumed that $\Bmod$ satisfies the  (KL) property above and a further
homological condition stated in Hypothesis \ref{section7hypothesis}(2) is needed. The conclusion is that
$B$ satisfies the (SKL$'$) condition, and, so $\gr B$ is Koszul and has other strong properties
as discussed two
paragraphs above (as well as the introduction). See Part II
(including its introduction) for further discussion of how the Koszulity property hypothesized for $\fa$ is known in the cases of interest there.
 
\vskip.2in
\section{Preliminaries on gradings}
We work with finite dimensional algebras $A$ (or $\fa$) over a field $K$.  Let $A$-mod be the category of left $A$-modules which are finite dimensional over $K$. (All ``$A$-modules" in this paper will be assumed to 
be finite dimensional, unless otherwise indicated.) There is given a set $\Lambda$ in bijection with the set Irr($A$) of isomorphism
 classes of irreducible $A$-modules.   For $\lambda \in \Lambda$, let $L(\lambda)$ be an irreducible module whose isomorphism class corresponds to $\lambda$.

 Often  $A$ is positively graded, i.~e.,
 $$A=\bigoplus_{n\in\mathbb N} A_n,\quad A_mA_n\subseteq A_{m+n}, \quad (m,n\in{\mathbb N}).$$
 (The term ``graded algebra" will always mean ``positively graded algebra.")
 Let $A$--grmod be the category of left $A$-modules $M$ which are finite dimensional $K$-modules and which are $\mathbb Z$-graded, i.~e.,
$ M=\bigoplus_{n\in
 \mathbb Z}M_n$ over $A$.
 Thus, $M_n=0$ if $|n| >\!\!> 0$, and, for $m,n\in\mathbb Z$, $A_mM_n\subseteq M_{m+n}$. If  $M,N\in A$--grmod,
 $\Hom_{A{\text{\rm{--grmod}}}}(M,N)\equiv \grHom _A(M,N)$ is the $R$-module of $A$-module homomorphisms $f:M\to N$ such that $f(M_n)\subseteq N_n$, for all integers $n$.

 Given a graded $A$-module $M$, $M_n$ is called the $n$th grade of $M$; it is
 naturally an $A_0$-module, and, therefore, can be regarded as an $A$-module through the surjective map $A\twoheadrightarrow A/(\sum_{i>0}A_i)\cong A_0$. For $M\in A$--grmod and $r\in \mathbb Z$, let $M(r)\in A$--grmod be the graded $A$-module obtained by
 shifting the grading on $M$ by $r$ places to the right; i.~e., $M(r)_n=M_{n-r}$.  Then
 \begin{equation}\label{iso1}\Hom_A(M,N)=\bigoplus_{r\in\mathbb Z}\grHom _A(M,N(r)).\end{equation}
An irreducible $A$-module $L$ is regarded as an irreducible graded $A$-module concentrated in grade $0$.

We say $M\in\Agrmod$ is generated in grade $i$ if $AM_i=M$.  Let
 $\Agrmod_i$ be the full subcategory of $\Agrmod$ consisting of modules generated in grade $i$.

When $A$ is graded, the category $A$--grmod is an abelian category with enough projective objects.
The bifunctors $\grExt^n_A(-,-):\Agrmod\times\Agrmod\to K{\text{\rm --mod}}$, $n=0,1,\cdots$,  are therefore defined. As with (\ref{iso1}), these
functors are related to the bifunctors $\Ext^\bullet_A(-,-)$ in $\Amod$ by an isomorphism
\begin{equation}\label{Extgrouprelationstograded}
\Ext^\bullet_A(M,N)\cong\bigoplus_{r\in\mathbb Z}\grExt^\bullet_A(M,N(r)),\quad M,N\in\Agrmod\end{equation}
preserving homological degrees.

Given a graded algebra $A$, let $\Forget=\Forget_A:\Agrmod\to\Amod$ be the exact ``forgetful functor" which assigns to any $M\in\Agrmod$ the underlying $A$-module
in which the grading is ignored.\footnote{While useful in this preliminary section, we will generally be more informal with the forgetful
 functor, using the same symbol $M$ for both a graded or ungraded module, as determined by context.} For $P\in\Agrmod$, $P$ is projective if and only if $\text{Forget}\, P$ is projective in $\Amod$.

\smallskip
 Let $A$ be a finite dimensional $K$-algebra. We do not necessarily assume that $A$ is graded. Let $\rad\,A$ be the radical of $A$ and, for  $r\in\mathbb N$, write $\rad^rA=(\rad A)^r$. Similarly, if $M\in A$-mod, put
$\rad^rM=\rad_A^rM:=(\rad^rA)M$. More generally, given integers
$r,s$ satisfying $0\leq r\leq s$, put
\begin{equation}\label{Mrsnotation} M^{r,s}=\rad^rM/\rad^sM\in
\Amod.\end{equation}
Thus, $M^{0,s}=M/\rad^sM$. We sometimes write $M^r=M^{r,\infty}$ for $\rad^rM$.
Also, let
$0=\soc_0M\subseteq\soc_{-1} M\subseteq \cdots$
be the socle series of $M$. Thus, $\soc_{-1}M$ is the socle of $M$, while for $i>1$, $\soc_{-i}M\subseteq M$ is the unique submodule containing $\soc_{-i+1}M$ such that $\soc_{-i}M/\soc_{-i+1}M=\soc(M/\soc_{-i+1}M)$.

A graded algebra $A=\bigoplus_{n\in\mathbb N}A_n$ is
tightly graded if $A_0$ is semisimple, and, for $n\geq 1$,
$A_n=A_1^n$. It is Koszul provided that, if $L,L'$ are irreducible modules with grading concentrated in grade 0, then
$$\grExt^n_A(L,L'(r))\not=0\implies n=r.$$ Equivalently, if $P_\bullet\twoheadrightarrow L$ is a minimal projective resolution in $\Agrmod$, then the
head of $P_i$, $i\geq 0$, is generated in grade $i$. Any Koszul algebra is tightly graded.

 For any positively
graded algebra $A$, $M\in A$-grmod $_0$ is said be linear, or to have a linear resolution, if there graded projective resolution $P_\bullet\twoheadrightarrow M$ in which each $P_i$ generated in grade $i$.  (Such
a resolution is necessarily minimal.)  See \cite{Mazor} for more on this topic. Thus, $A$ is
Koszul provided every irreducible module $L$ (viewed as a graded module concentrated in grade 0) is linear.
Even if $A$ is Koszul, not every $M\in A$-grmod$_0$ need be linear. However, linearity for $M$ is equivalent
to an ext-vanishing condition as stated for $L$.

Returning to the general finite dimensional algebra $A$, the associated graded algebra
\begin{equation}\label{gradedalgebra} \gr A  :=\bigoplus_{i\geq 0}\rad^iA/\rad^{i+1}A=\bigoplus_{i\geq 0} A^{i,i+1}\end{equation}
is tightly graded. If $A=\bigoplus_{n\geq 0} A_n$ is itself tightly graded, there is a natural isomorphism
\begin{equation}\label{natiuralisofortightlygraded} \iota_A:A\overset\sim\longrightarrow\gr A\end{equation}
of graded algebras which send $x\in A_i\subseteq\rad^iA$ to its image $[x]$ in $\rad^iA/\rad^{i+1}A$. It induces  a category
 equivalence $\iota_{A *}:\gr A{\text{--\rm grmod}}\overset\sim\longrightarrow \Agrmod$.
If $M\in\Amod$, putting
\begin{equation}\label{grMODULE}\gr M=\gr_AM:=\bigoplus_{n\geq 0}\rad^nM/\rad^{n+1}M\in\gr\Agrmod_0\end{equation}
defines an additive functor $\gr:\Amod\to\gr A{\text{\rm{--grmod}}}_0$, i.~e., a functor from the category of $A$-modules (generated in grade 0) to the category of graded modules for
the tightly graded algebra $\gr A$. While not left or right exact, $\gr$ preserves surjections. Additionally, if $P\twoheadrightarrow M$ is a projective cover in $\Amod$, then $\gr P\twoheadrightarrow\gr M$ is a projective cover in $\gr\Agrmod$. Since $(\gr A)/(\rad\gr A)\cong A/\rad A$, the module categories $\gr\Amod$ and $\Amod$ for the algebras
$\gr A$ and $A$ have the same irreducible modules (and completely reducible)  modules.

Also, define
$\gr M=\gr^\bullet_AM:=\bigoplus_{i\leq 0} \soc_{i-1}M/\soc_iM\in\gr\Agrmod.$
Clearly, $\gr^\bullet:\Amod\to\gr\Agrmod$ is  an additive functor, preserving injections. Both $\gr $ and $\gr^\bullet$ make sense for right $A$-modules $M$ (resulting in graded
right $\gr A$-modules). Since the linear dual $(-)^*$ on modules reverses left and right modules,  $(\gr M)^* = \gr^\bullet M^*$ and $(\gr^\bullet M)^*=\gr M^*$, for any left or right $A$-module $M$.

A short exact sequence $0\to L\to M\to N\to 0$ in $\Amod$ defines a short exact
 sequence $0\to\gr^\#L\to \gr M\to \gr N\to 0$ in $\gr\Agrmod$, in which $\gr^\#L=\gr^\#_{M}L$ (after regarding $L$ as
 a submodule of $A$) is given explicitly as
\begin{equation}\label{grpounddefined} \gr^\#L= \bigoplus_{s\geq 0}(L\cap\rad^sM)/(L\cap\rad^{s+1}M)\in\gr\Agrmod.\end{equation}
 If $L$ is completely reducible, then $\Forget(\gr^\#L)\cong L$, although the isomorphism is not canonical.

\begin{prop}\label{basictightprop}
 Suppose that $\fa$ is a tightly graded algebra.
 There is a natural isomorphism
 $$\iota_{\fa *}\circ\gr\circ\Forget|_{\fagrmod_0}\cong\id_{\fa{\text{\rm{--grmod}}}_0}$$
 of functors. In other words (and more informally), if $M\in\fagrmod_0$, and if $\gr M$ is given a graded $\fa$-structure via the isomorphism
$\iota_{\fa}:\fa\overset\sim\to\gr\, \fa$, then $M\cong\gr M$.\end{prop}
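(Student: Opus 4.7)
The plan is to reduce the statement to a compatibility between two filtrations on $M$: the (ungraded) radical filtration of $\Forget M$, and the obvious filtration $F^nM := \bigoplus_{k \geq n} M_k$ coming from the grading. Since $\iota_\fa$ sends $\fa_i$ onto $\rad^i\fa/\rad^{i+1}\fa$, an isomorphism of graded $\fa$-modules $M \cong \gr(\Forget M)$ (with the latter given its $\iota_{\fa*}$-transferred $\fa$-module structure) is the same thing as a graded vector space isomorphism $M_n \cong \rad^n(\Forget M)/\rad^{n+1}(\Forget M)$ for each $n \geq 0$ intertwining the actions of $\fa_k$ and of $[\rad^k\fa]$ in the obvious way. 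So it suffices to prove $F^nM = \rad^n(\Forget M)$ for all $n$, since then $F^nM/F^{n+1}M = M_n$ identifies tautologically with the $n$-th graded piece of $\gr(\Forget M)$, and the $\fa$-action on both sides is by the same formula $y\cdot x = yx$ for $y \in \fa_k$, $x \in M_n$.

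First I would observe that, because $\fa_0$ is semisimple and $\bigoplus_{k \geq 1}\fa_k$ is a nilpotent (two-sided) ideal of $\fa$, we have $\rad\fa = \bigoplus_{k \geq 1}\fa_k$. Tightness $\fa_k = \fa_1^k$ then upgrades this to
\[
\rad^n\fa \;=\; \bigoplus_{k \geq n}\fa_k \qquad (n \geq 0),
\]
by the easy inclusions $\rad^n\fa \subseteq \bigoplus_{k \geq n}\fa_k$ (degree count) and $\fa_k = \fa_1^k \subseteq (\rad\fa)^k \subseteq \rad^n\fa$ for $k \geq n$. Applying this identity to $M$ and using that $M$ is generated in grade $0$ (so $M_k = \fa_k M_0$), one gets
\[
\rad^nM \;=\; (\rad^n\fa)M \;=\; \bigoplus_{k \geq n}\fa_k M_0 \;=\; \bigoplus_{k \geq n} M_k \;=\; F^n M,
\]
which is exactly the desired compatibility of filtrations.

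Given this, the canonical $K$-linear isomorphism $\phi_M : M \overset\sim\to \gr(\Forget M)$ sending $x \in M_n$ to its class in $\rad^n M/\rad^{n+1}M$ is a graded map, and its $\fa$-equivariance (for the structure pulled back along $\iota_\fa$) is immediate from the definitions: for $y \in \fa_k$ and $x \in M_n$, $\iota_\fa(y)\cdot\phi_M(x)$ is by definition $[yx] \in \rad^{n+k}M/\rad^{n+k+1}M = \phi_M(yx)$. Finally, naturality of $\phi_M$ in $M \in \fagrmod_0$ is a direct diagram chase: a morphism $f : M \to N$ in $\fagrmod_0$ carries $F^nM$ into $F^nN$, hence $\rad^n(\Forget M)$ into $\rad^n(\Forget N)$, and $\phi_N \circ f$ agrees with $\gr(\Forget f)\circ\phi_M$ on each grade $M_n$ by construction.

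The only real content is the identification $\rad^nM = \bigoplus_{k \geq n}M_k$, and within that the only slightly subtle point is the use of tightness to upgrade $\rad\fa = \bigoplus_{k \geq 1}\fa_k$ to an analogous description of all its powers; once that is in hand, the rest is bookkeeping about how $\iota_\fa$ transfers the $\fa$-structure.
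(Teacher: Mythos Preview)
Your proof is correct and follows essentially the same approach as the paper. The paper's proof is a brief sketch: it defines the map $M\to\gr M$ by $m\in M_j\mapsto [m]_j$, checks equivariance via the same computation you give, and declares bijectivity to be ``clear.'' You fill in what the paper leaves implicit, namely the identification $\rad^n M=\bigoplus_{k\geq n}M_k$ (which is what makes the map well-defined and bijective), and you also verify naturality explicitly, which the paper does not.
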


\begin{proof} The isomorphism $\iota_\fa:\fa\to\gr\,\fa$ sends $x\in \fa_i$ to $[x]_i\in\rad^i\fa/\rad^{i+1}\fa$. Define a map $M\to\gr M$ similarly.
For $m\in M_j$, we have $xm\in M_{i+j}$, and
$xm\mapsto [xm]_{i+j}=[x]_i[m]_j=x[m_j],\forall i,j\in\mathbb Z.$
So our map is a homomorphism. It is clearly bijective, and the lemma is proved.\end{proof}

\begin{cor} Let $\fa$ be a tightly graded algebra. If $M,M'\in\fagrmod_0$ satisfy $\Forget(M)\cong\Forget(M')$ (i.~e., if $M$ and $M'$ are isomorphic as
ungraded $\fa$-modules),  then $M\cong M'$ in $\fagrmod$.  \end{cor}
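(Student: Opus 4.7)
The plan is an immediate deduction from Proposition \ref{basictightprop}. Suppose we are given an ungraded $\fa$-module isomorphism $f\colon\Forget(M)\overset{\sim}{\to}\Forget(M')$. Since $\gr$ is an additive functor on $\famod$, it carries $f$ to a morphism $\gr f\colon \gr\Forget(M)\to \gr\Forget(M')$ in $\gr\fagrmod$, with two-sided inverse $\gr(f^{-1})$; hence $\gr f$ is an isomorphism. That $\gr f$ is automatically degree-preserving follows from the definition of $\gr$ via the radical filtration: since $f(\rad^n\Forget(M))=\rad^n\Forget(M')$ for every $n\geq 0$, $f$ induces isomorphisms on each radical subquotient, and these assemble into $\gr f$.

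Next, transport this isomorphism through the category equivalence $\iota_{\fa *}\colon\gr\fagrmod\overset{\sim}{\to}\fagrmod$ induced by the isomorphism $\iota_\fa\colon\fa\overset{\sim}{\to}\gr\fa$ of graded algebras. Because $M,M'\in\fagrmod_0$, Proposition \ref{basictightprop} identifies $\iota_{\fa *}(\gr\Forget(M))$ with $M$ in $\fagrmod$, and likewise identifies $\iota_{\fa *}(\gr\Forget(M'))$ with $M'$. Composing these two identifications with $\iota_{\fa *}(\gr f)$ produces the desired isomorphism $M\cong M'$ in $\fagrmod$.

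There is essentially no obstacle: the corollary is a one-line consequence of the proposition together with functoriality of $\gr$ on ungraded modules. The only point that deserves explicit mention, as above, is that an ungraded $\fa$-module map between objects in $\fagrmod_0$ automatically respects the radical filtration, so its image under $\gr$ lives in the graded category and can be pulled back through $\iota_{\fa *}$.
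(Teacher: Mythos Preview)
Your proof is correct and is precisely the argument the paper has in mind: the corollary is stated immediately after Proposition~\ref{basictightprop} with no separate proof, since it is the evident consequence of applying the functor $\iota_{\fa *}\circ\gr$ to the ungraded isomorphism and invoking the natural identification $\iota_{\fa *}\circ\gr\circ\Forget|_{\fagrmod_0}\cong\id$. The paper also notes that the result holds without the tightness hypothesis, citing \cite[Thm.~4.1]{GG} for an alternative argument.
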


For another argument for this result, without the hypothesis of tight grading, see \cite[Thm. 4.1]{GG}.

\section{The $\gr$-functor and extensions}

We work with two finite dimensional algebras $\fa$ and $A$ over a field $K$, with $\fa$ usually a subalgebra of $A$.   The functor $\gr=\gr_\fa:\famod\to\gr\,\fagrmod$
defined in \S2, can, in favorable circumstances,
be defined on extensions, i.~e., on elements in $\Ext^1_A(M,L)$ for certain $A$-modules $M,L$.

Let $M\in\Amod$ and let $0\to\Omega\to P\overset\nu\to M\to 0$  be a short exact sequence in which $\nu$ is a
projective cover of $M$. We obtain a commutative diagram with exact rows in $\gr A$-grmod:

\begin{equation}\label{commdiagram}
\begin{array}{ccccccccc}

0 & \longrightarrow & \mbox{gr}^\#\Omega & \longrightarrow & \mbox{gr }P & \stackrel{{\rm gr \,}\nu}{\longrightarrow} & \mbox{gr } M & \longrightarrow & 0

\\[1mm]

&& \begin{picture}(5,20)

\put(0,20){\vector(0,-1){20}}

\put(0,20){\vector(0,-1){16}}

\end{picture}

&&

\begin{picture}(5,20)

\put(0,20){\vector(0,-1){20}}

\put(0,20){\vector(0,-1){16}}

\end{picture}

&&

\begin{picture}(2,20)

\put(0,20){\line(0,-1){20}}

\put(2,20){\line(0,-1){20}}

\end{picture}

&& \\[1mm]

0 & \longrightarrow & \mbox{gr}^\#(\Omega/\mbox{rad }\Omega) & \longrightarrow & \mbox{gr}(P/\mbox{rad }\Omega) & \longrightarrow & \mbox{gr } M & \longrightarrow & 0

\\[1mm]

&&

\begin{picture}(16,20)

\put(0,10){$\wr$}

\put(5,20){\vector(0,-1){20}}

\put(8,10){$\omega$}

\end{picture}

&&&&&&

\\[1mm]

&& \Omega/\mbox{rad } \Omega

&&&&&&

\end{array}
\end{equation}

\noindent
in which ${\gr\nu}$ is a projective cover map.  Here $(\gr^\#\Omega)_i=(\Omega\cap\rad^iP)/(\Omega\cap\rad^{i+1}P)$, $i\geq 0$, and $\gr^\#(\Omega/\rad\Omega)$ is defined similarly. The second row is induced by the exact sequence
$0\to\Omega/\rad\Omega\to P/\rad\Omega\to M\to 0$.
The surjection $\gr P\twoheadrightarrow \gr(P/\rad\Omega)$ induces the top left vertical surjection.
    The isomorphism $\omega$ is fixed, but is not canonical. We forget the gradings, and regard (\ref{commdiagram}) as a
    diagram in $\gr\Amod$. Thus, for any completely reducible $A$-module $L$,  there is an injection
\begin{equation}\label{injection}\Ext^1_A(M,L)\cong\Hom_A(\Omega,L)\cong\Hom_A(\Omega/\rad\Omega,L)\hookrightarrow\Hom_{\gr A}(\gr^\#\Omega,L)\cong\Ext^1_{\gr A}(\gr M,L),\end{equation}
natural in $L$. Thus, we obtain a  natural transformation
\begin{equation}\label{naturaltransformation} \gr=\gr_{A,M,\nu,\omega}:\Ext^1_A(M,-)|_{\text{\rm completely reducible}}\longrightarrow\Ext^1_{\gr A}(\gr M,-)|_{\text{\rm completely reducible}}\end{equation}
of functors on the category of completely reducible $A$-modules; it depends
on the projective cover $\nu:P\twoheadrightarrow M$ and the isomorphism $\omega$ in (\ref{commdiagram}).

\begin{prop}\label{grProp1} Suppose that $\fa$ is a subalgebra of a finite dimensional algebra $A$ with $(\rad\fa)A=\rad A$, and $M$ is an $A$-module with a projective cover
$P\overset\nu\twoheadrightarrow M$ in which $P$ is also projective as an $\fa$-module. Then the choices $\nu,\omega$ above for $A$ and $M$ may also be used
for $\fa$ and the restriction of $M$ to $\fa$. For each completely reducible $A$-module $L$, we have a commutative diagram
 $$\begin{CD}  \Ext^1_A(M,L) @>{\gr}>> \Ext^1_{\gr A}(\gr M,L) \\
@VVV @VVV\\
  \Ext^1_\fa(M,L) @>{\gr}>>  \Ext^1_{\gr\fa}(\gr N,L) \end{CD}$$
where the horizontal maps arise from the $\gr$-construction and the vertical maps are induced by ``restriction"
through the algebra homomorphisms $\fa\to A$ and $\gr\,\fa\to\gr\,A$.\end{prop}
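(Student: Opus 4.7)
The plan is to show that the data $(P,\nu,\Omega,\omega)$ used to define the $A$-version of the $\gr$-construction serves without modification as the data for the $\fa$-version, whereupon commutativity of the square becomes nearly tautological. The crucial observation is that the hypothesis $(\rad \fa) A = \rad A$ forces the $A$-radical and $\fa$-radical filtrations of any $A$-module to agree: for $N\in\Amod$ one computes
$$\rad_A N \,=\, (\rad A)\,N \,=\, (\rad \fa)(A N) \,=\, (\rad \fa)\,N \,=\, \rad_\fa N,$$
and then inductively $\rad^s_A N = \rad^s_\fa N$ for every $s\geq 0$. Applied to $P$, $\Omega$, and $M$, this says that $\gr M$, $\gr P$ and $\gr^\#\Omega$ are literally the same graded $K$-spaces in both settings, with the $\gr\fa$-action obtained from the $\gr A$-action by restriction along the graded algebra map $\gr\fa\to\gr A$ induced by $\fa\hookrightarrow A$.

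Next I would verify that $\nu:P\twoheadrightarrow M$ is a projective cover in $\famod$ as well as in $\Amod$: projectivity of $P$ over $\fa$ is given, and essentiality holds because $\ker\nu\subseteq\rad_A P=\rad_\fa P$. The completely reducible module $\Omega/\rad\Omega$ and the chosen isomorphism $\omega$ to $\gr^\#(\Omega/\rad\Omega)$ then transport verbatim to the $\fa$-setting, so the $\fa$-version of diagram (\ref{commdiagram}) is just the $A$-version with all structures restricted through $\fa\hookrightarrow A$ and $\gr\fa\to\gr A$. Since $L$ is annihilated by $\rad A\supseteq\rad\fa$, it remains completely reducible as an $\fa$-module, so the bottom row of the square in the proposition fits into exactly the same $\gr$-construction framework used for the top row.

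Commutativity then reduces to a routine diagram chase. By (\ref{injection}), a class $\xi\in\Ext^1_A(M,L)$ corresponds to an $A$-homomorphism $\bar f:\Omega/\rad\Omega\to L$, and $\gr_A(\xi)$ is represented by $\bar f\circ\omega^{-1}:\gr^\#\Omega\to L$. Restricting $\xi$ to $\Ext^1_\fa(M,L)$ reinterprets $\bar f$ as an $\fa$-map, after which $\gr_\fa$ produces $\bar f\circ\omega^{-1}$ now regarded as a $\gr\fa$-map, which is precisely the image of $\gr_A(\xi)$ under the right-hand vertical restriction. The mild obstacle I anticipate is the careful bookkeeping of what ``the same'' data means as one passes between $A$- and $\fa$-module structures and between $\gr A$ and $\gr\fa$; every step, however, rests on the single nontrivial input $(\rad\fa)A=\rad A$, which identifies the two radical filtrations, after which the commutativity is formal.
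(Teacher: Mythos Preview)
Your proposal is correct and follows essentially the same approach as the paper: both arguments rest on deducing from $(\rad\fa)A=\rad A$ that the $A$- and $\fa$-radical filtrations of every $A$-module agree, whence the data $(P,\nu,\Omega,\omega)$ and the diagram (\ref{commdiagram}) are literally the same in both settings, and commutativity follows from naturality of restriction on $\Hom$-spaces. You give slightly more explicit detail on essentiality of $\nu$ over $\fa$ and on the final diagram chase, but there is no substantive difference in method.
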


\begin{proof} The condition $(\rad\fa) A=\rad A$ implies by induction on $n$ that $\rad^nA=\rad^n\fa A:=(\rad\,\fa)^nA$. (Observe that $\rad^2A=(\rad\fa) A (\rad A)=(\rad\fa)(\rad A)
 =(\rad\fa)(\rad\fa)A=(\rad\fa)^2A$, etc.) Therefore, given an $A$-module $M$, $\rad^n_AM=\rad^n_\fa N$, so
that $\gr_AM=\gr_\fa M$. In particular, a completely reducible $A$-module remains completely reducible upon restriction to $\fa$. Also, if $M\in\Amod$,
then the head of $M$ as an $A$-module becomes upon restriction to $\fa$ the head of $M|_\fa$.\footnote{
Applied to extensions of $M$ by $L$ (or of $\gr M$ by $L$), this property of heads implies both
vertical restriction maps in the display of the proposition are injective. See Proposition \ref{elemresult}.} 

 Thus, since $\nu:P\twoheadrightarrow M$ is a projective cover in $\Amod$ and  $P|_\fa$ is projective, $\nu$
is also a projective cover in $\famod$. It follows that if the isomorphism $\omega$ in (\ref{commdiagram}) is chosen for $\Amod$, this isomorphism  also works for $\famod$. The commutative diagram thus follows
from the naturality of the restriction maps $\Hom_A(-,-)\to\Hom_\fa(-,-)$ and $\Hom_{\gr\, A}(-,-)\to
\Hom_{\gr\,\fa}(-,-)$.\end{proof}

Let $\fa$ be tightly graded and $M\in\fagrmod_0$.  Taking the projective
cover $P\overset\nu\longrightarrow M$ in $\fagrmod$, both $\Omega$ and $\Omega/\rad\Omega$ belong to $\fagrmod$, and
the isomorphism $\omega$ in (\ref{commdiagram}) can be assumed to be in $\fagrmod$. Also, $\Omega\cong\gr^\#\Omega$ in $\fagrmod$. By Proposition
\ref{basictightprop}, there are isomorphisms $P\cong\gr P$ and $M\cong\gr P$ compatible with the isomorphism $\iota_\fa$. Putting all this together,
we get the following result.
\begin{prop}\label{grProp2} Suppose that the finite dimensional algebra $\fa$ is tightly graded and that $M\in\fagrmod_0$.  For each completely reducible $\fa$-module $L$ with
a graded $\fa$-module structure concentrated in grade 0, and each integer $s$, we have a commutative diagram

$$\begin{CD} \Ext^1_\fa(M,L) @>{\sim}>> \Ext^1_{\gr\fa}(\gr M,L) \\
@AAA @AAA \\
\grExt^1_\fa(M,L(s)) @>{\sim}>> \grExt^1_{\gr\fa}(\gr M,L(s))\end{CD}
$$
in which the vertical maps are the natural inclusions defined by (\ref{Extgrouprelationstograded}), the top horizontal map is an isomorphism given by the $\gr$-construction (determined by $\Omega$ and $\nu$ described above), and the bottom
horizontal map is the natural identification induced by Proposition \ref{basictightprop}. \end{prop}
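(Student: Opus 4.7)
The plan is to show that in the tightly graded setting each identification in the construction (\ref{injection}) of the $\gr$-transformation becomes a canonical \emph{graded} isomorphism, from which both the isomorphism assertion and the commutativity of the square follow. First I would choose a projective cover $\nu\colon P\twoheadrightarrow M$ in the graded category $\fagrmod$, with $P$ generated in grade $0$ and graded kernel $\Omega\subseteq P$. Forgetting gradings, $\nu$ is still a projective cover in $\famod$, so it is admissible data for the construction of (\ref{commdiagram}).

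The decisive computation is that $\gr^\#\Omega\cong\Omega$ as graded $\fa$-modules, where the $\gr\fa$-action on $\gr^\#\Omega$ is transported to $\fa$ via $\iota_\fa$. Because $\fa$ is tightly graded and $P$ is generated in grade $0$, the radical filtration of $P$ coincides with its grade filtration: $\rad^sP=\bigoplus_{i\geq s}P_i$. Since $\Omega$ is graded, $\Omega\cap\rad^sP=\bigoplus_{i\geq s}\Omega_i$, hence $(\Omega\cap\rad^sP)/(\Omega\cap\rad^{s+1}P)\cong\Omega_s$, and a direct check shows the $\gr\fa$-action matches the $\fa$-action via $\iota_\fa$. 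The same reasoning yields $\gr P\cong P$ (consistent with Proposition \ref{basictightprop}) and $\gr^\#(\Omega/\rad\Omega)\cong\Omega/\rad\Omega$, so the auxiliary isomorphism $\omega$ in (\ref{commdiagram}) can be chosen graded, and the whole diagram (\ref{commdiagram}) lifts to $\fagrmod$.

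With these identifications in hand, the injection $\Hom_\fa(\Omega/\rad\Omega,L)\hookrightarrow\Hom_{\gr\fa}(\gr^\#\Omega,L)$ from (\ref{injection}) becomes an isomorphism, because $\gr^\#\Omega\cong\Omega$ has the same head $\Omega/\rad\Omega$ and $L$ is completely reducible; this gives the top horizontal map of the proposition as an isomorphism. Commutativity is then a direct trace: represent $\xi\in\grExt^1_\fa(M,L(s))$ by a degree-$0$ graded map $f\colon\Omega\to L(s)$ factoring through $\Omega/\rad\Omega$; both the ``up-then-right'' path (forget the grading on $L(s)$, then apply the $\gr$-construction) and the ``right-then-up'' path (identify via $\iota_\fa$, then forget) produce, via the canonical graded iso $\gr^\#\Omega\cong\Omega$, the same underlying $\fa$-homomorphism $\gr^\#\Omega\to L$, hence the same class in $\Ext^1_{\gr\fa}(\gr M,L)$. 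The main obstacle is the bookkeeping of compatible graded identifications—the iso $\gr^\#\Omega\cong\Omega$, the graded choice of $\omega$, the natural inclusions of (\ref{Extgrouprelationstograded}), and $\iota_\fa$—to ensure they fit together coherently so that the two routes around the square coincide; no new technique beyond Proposition \ref{basictightprop} and the general $\gr$-construction is needed.
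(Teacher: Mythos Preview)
Your proposal is correct and follows essentially the same approach as the paper. The paper's argument (given in the paragraph immediately preceding the statement of the proposition) likewise takes a graded projective cover in $\fagrmod$, notes that $\Omega\cong\gr^\#\Omega$ in $\fagrmod$, that $\omega$ may be chosen graded, and invokes Proposition~\ref{basictightprop} for $P\cong\gr P$ and $M\cong\gr M$; your write-up simply supplies the explicit verification that $\rad^sP=\bigoplus_{i\ge s}P_i$ and hence $(\gr^\#\Omega)_s\cong\Omega_s$, which the paper leaves to the reader.
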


\begin{rem}\label{grRem1} The proposition provides a (largely obvious) interpretation of the $\gr$-construction in the  special case in which $\fa$ is tightly graded, $M\in\fagrmod_0$, and the extension in $\Ext^1_\fa(M,L)$ arises from
a graded extension  $0\to L(s)\to E\overset\mu\longrightarrow M\to 0$. It says that the result of applying the
$\gr$-construction to this element of $\Ext^1_\fa(M,L)$ is (if nonzero) just the element of $\Ext^1_{\gr\fa}(\gr M,L)$ arising from the
extension
 $0\to L(s)\longrightarrow \gr E\overset{\gr\mu}\longrightarrow \gr M\to 0$
in $\gr\fagrmod$. This is one of the few explicit interpretations we have for the $\gr$-construction. Also, no projective
cover is needed, nor are any other choices. Simply apply $\gr$ to the (ungraded                                               version of) $\mu$, giving a surjection;  the $\gr$-construction is obtained passing to
a natural kernel diagram
using $L$.
(This works  even when the original extension is split, though $E$ will not be generated in
degree $0$ when $s \neq 0$, in the split case.) One cannot hope for such a simple interpretation in general, since this procedure always results in an extension obtained by forgetting the grading on a graded  extension. No  such functor could be additive, if  extensions
  in different degrees by the same irreducible were to exist in its    image, for                                                                                  a fixed $M$   and $\gr M$.    \end{rem}

\begin{thm}\label{grThm1} Suppose $A$ is a finite dimensional algebra over a field and that $P\in\Amod$ is projective. Then, for each non-negative integer $r$ and each completely
reducible $A$-module $L$, the $\gr$-construction induces an isomorphism
$$\Ext^1_A(P/\rad^rP,L)\overset\sim\longrightarrow\Ext^1_{\gr A}(\gr(P/\rad^rP),L).$$
\end{thm}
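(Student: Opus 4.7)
The case $r=0$ is trivial since $P/\rad^0 P = 0$. For $r \geq 1$, the plan is to apply the framework of (\ref{commdiagram}) with $M := P/\rad^r P$ and $\nu: P \twoheadrightarrow M$ the tautological surjection. Since $r \geq 1$, the kernel $\Omega := \rad^r P$ lies in $\rad P$, so $\nu$ is a projective cover.

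Because $P$ is projective and $L$ is completely reducible, the long exact sequence for $0 \to \Omega \to P \to M \to 0$ collapses: $\Hom_A(P,L) \to \Hom_A(\Omega,L)$ vanishes (any map $P \to L$ kills $\rad P \supseteq \Omega$), and $\Ext^1_A(P,L) = 0$. Hence $\Ext^1_A(M,L) \cong \Hom_A(\Omega,L) \cong \Hom_A(\Omega/\rad\Omega, L)$. The same reasoning applies in $\gr A$-grmod to the top row of (\ref{commdiagram}), since $\gr P$ is projective there; it yields $\Ext^1_{\gr A}(\gr M, L) \cong \Hom_{\gr A}(\gr^\#\Omega, L)$, contingent on the essentiality condition $\gr^\#\Omega \subseteq \rad \gr P$, which will be immediate from the computation below. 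Thus the theorem reduces to showing that the natural injection in (\ref{injection}),
$$\Hom_A(\Omega/\rad\Omega, L) \hookrightarrow \Hom_{\gr A}(\gr^\#\Omega, L),$$
is also surjective.

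The key step is an explicit computation of $\gr^\#\Omega$ via (\ref{grpounddefined}). Since $\Omega = \rad^r P \subseteq \rad^s P$ for $s \leq r$ while $\rad^s P \subseteq \Omega$ for $s \geq r$, one reads off
$$\gr^\#\Omega = \bigoplus_{s \geq r} \rad^s P/\rad^{s+1}P,$$
concentrated in grades $\geq r$, with grade-$r$ component equal to $\Omega/\rad\Omega$. Using $\rad^{s+1}P = (\rad A)(\rad^s P)$, multiplication by $\rad \gr A$ surjects the grade-$s$ summand onto the grade-$(s+1)$ summand for every $s \geq r$. Consequently $\gr^\#\Omega$ is generated as a $\gr A$-module by its grade-$r$ part, $\rad\gr^\#\Omega = \bigoplus_{s \geq r+1} \rad^s P/\rad^{s+1}P$, and the head $\gr^\#\Omega/\rad\gr^\#\Omega$ is canonically $\Omega/\rad\Omega$.

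With this in hand, any graded $\gr A$-map from $\gr^\#\Omega$ to the completely reducible $L$ factors through its head, so both Hom spaces collapse to $\Hom_{A/\rad A}(\Omega/\rad\Omega, L)$ and the map is an isomorphism. The principal obstacle, and the only genuinely new input beyond the formalism of (\ref{commdiagram}), is recognizing that $\gr^\#\Omega$ is concentrated in grades $\geq r$ and generated in the single grade $r$; this is exactly what upgrades the injection (\ref{injection}) to a bijection, and it is what singles out the quotients $P/\rad^r P$ of a projective among arbitrary modules $M$.
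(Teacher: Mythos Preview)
Your proof is correct and follows essentially the same approach as the paper's: identify $\Omega=\rad^rP$, recognize $\gr^\#\Omega$ as $(\gr\Omega)(r)=\bigoplus_{s\geq r}\rad^sP/\rad^{s+1}P$, and observe that its head is $\Omega/\rad\Omega$, so the map in (\ref{injection}) is an isomorphism. You spell out more of the details (the $r=0$ case, why $\gr^\#\Omega$ is generated in grade $r$) than the paper does, but the argument is the same.
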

\begin{proof} The projective cover $P\overset\nu\twoheadrightarrow P/\rad^r P$ has kernel $\Omega=\rad^rP$. Clearly, in this case, the kernel
 $\gr^\#\Omega$ of $\gr\nu$ is isomorphic to $(\gr\Omega)(r)$. Note that $(\Omega/\rad\Omega)(r)$ is the head of
 $(\gr\Omega)(r)$. Consequently,  the surjection $\gr^\#\Omega\twoheadrightarrow\gr^\#(\Omega/\rad\Omega)\overset\omega\longrightarrow\Omega/\rad\Omega$ induces
 an isomorphism on heads, and so an isomorphism $\Hom_A(\Omega,L)\overset\sim\longrightarrow\Hom_{\gr A}(\gr^\#\Omega,L)$, proving the theorem.\end{proof}

\begin{cor}\label{grCor2} Let $A$ be any finite dimensional algebra, and suppose that $D$ and $L$ are completely reducible $A$-modules. Then the $\gr$-construction
gives an isomorphism
$\gr:\Ext^1_{A}(D,L)\overset\sim\to\Ext^1_{\gr A}(D,L).$
\end{cor}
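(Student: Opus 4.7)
The plan is to deduce this as a direct specialization of Theorem \ref{grThm1}, taking $r=1$ and choosing $P$ to be a projective cover of $D$.

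Concretely, since $D$ is a finite dimensional completely reducible $A$-module, it admits a projective cover $\nu: P \twoheadrightarrow D$ in $\Amod$ whose kernel is $\rad P$; equivalently, $D \cong P/\rad P$. Substituting $r=1$ into Theorem \ref{grThm1} then yields a $\gr$-construction isomorphism
\begin{equation*}
\Ext^1_A(P/\rad P, L) \overset{\sim}{\longrightarrow} \Ext^1_{\gr A}\bigl(\gr(P/\rad P), L\bigr).
\end{equation*}

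It remains only to identify the two sides with $\Ext^1_A(D,L)$ and $\Ext^1_{\gr A}(D,L)$ respectively. The left-hand identification uses the isomorphism $D \cong P/\rad P$ above. For the right-hand side, observe that $P/\rad P$ is completely reducible, hence has zero radical, so $\gr(P/\rad P)$ collapses to $P/\rad P$ concentrated in grade $0$; under the identification of irreducibles for $A$ and for $\gr A$ (noted just after formula (\ref{grMODULE})), this is precisely $D$ viewed as a $\gr A$-module. Combining these identifications gives the desired isomorphism $\gr:\Ext^1_A(D,L)\overset{\sim}{\to}\Ext^1_{\gr A}(D,L)$.

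There is essentially no obstacle here: the content is already packaged in Theorem \ref{grThm1}, and the only thing to check is that $\gr$ commutes with the two evident identifications, which is immediate from the explicit construction of $\gr$ via diagram (\ref{commdiagram}) applied to the projective cover $P\twoheadrightarrow P/\rad P = D$. (One may also note that, by additivity of both sides in $D$, it would suffice to handle the case of a single irreducible $D=L(\lambda)$, but this reduction is unnecessary since the argument above works uniformly for any completely reducible $D$.)
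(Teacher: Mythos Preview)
Your proposal is correct and matches the paper's intended argument: the corollary is stated without a separate proof precisely because it is the immediate specialization of Theorem~\ref{grThm1} to the case $r=1$, $P$ the projective cover of $D$, together with the observation that $\gr(P/\rad P)=P/\rad P\cong D$.
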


\begin{rems}\label{grRem2} (a) Any choice of the $\gr$-construction may be used above.  We have used a cover $\nu$ with kernel $\Omega=\rad^rP$, but all such covers are isomorphic as maps. Thus, the heads of $\Omega$ and $\gr^\#\Omega$ must be isomorphic for the kernel
 $\Omega$ of any cover $\nu$, and any surjection
$\gr^\#\Omega\to\gr^\#(\Omega/\rad\Omega)\overset\omega\longrightarrow\Omega/\rad\Omega$ must induce an isomorphism on heads, leading to the isomorphism of
the theorem.

(b) We also have an isomorphism
$\Ext^1_{\gr A}(\gr(P/\rad^rP),L)\cong\grExt^1_{\gr A}(\gr (P/\rad^rP),L(r)),$
if $L$ is given a pure graded structure in grade $0$. (This follows because $\gr^\#\Omega$ is generated in grade $r$.)

(c) The proof shows that $\omega:\gr^\#\Omega/\rad\Omega\to\Omega/\rad\Omega$ may be taken to be the identity map on $\rad^rP/\rad^{r
+1}P$. Thus, if $0\to L\to E\overset\mu\longrightarrow P/\rad^rP\to 0$ is an extension, then its image under the $\gr$-construction is
represented by an extension $0\to L\to\gr E\overset{\gr\mu}\longrightarrow\gr(P/\rad^rP)\to 0$. (Indeed, if the first extension is viewed as a push-out of
$f:\Omega\to L$, the second is the push-out of $\gr f$, if the gradings are ignored.) A similar phenomenon occurs in Remark \ref{grRem1}. In both cases, the extensions
over $\gr A$ that are involved are graded, with $L$ given some pure grade. (See (b) above.) Another similar situation to   the present one   occurs in Proposition \ref{grProp3}(c); see the remark following its proof.
\end{rems}

 In the following result,  part (b) shows that, while the algebras $\fa$ and $\gr A$ need not be Morita
equivalent,  $\fa$ can strongly influence the homological algebra of $\gr A$. This is a dominant theme in this paper.

\begin{thm}\label{movinglemma} Let $\fa$ be a tightly graded subalgebra of the finite dimensional algebra $A$ such that $(\rad\fa)A=\rad A$. Let $M\in\Amod$ be such that $M|_\fa\in\fagrmod_0$, and assume that $M$ has a projective cover $P\overset\nu\twoheadrightarrow M$ in $\Amod$ in which $P|_{\fa}$ is projective.  Also, assume that $A_0$ is a Wedderburn complement for $A$ containing $\fa_0$, and that
$M_0$ is $A_0$-stable. Let $L$ be a completely reducible $A$-module.

(a) The projective cover $P\overset\nu\twoheadrightarrow M$ can be assumed, upon restriction to $\fa$, to be a projective cover in $\fagrmod$. Also,
$P|_{\fa}\in\fagrmod_0$.

(b) Any choice of the $\gr$-construction gives an isomorphism
$$\gr:\Ext^1_A(M,L)\overset\sim\to\Ext^1_{\gr A}(\gr M,L).$$\end{thm}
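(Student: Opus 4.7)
My plan is to prove (a) by constructing a compatible graded structure on $P$, and then deduce (b) by computing the head of $\gr^\#\Omega$ as a $\gr A$-module, where $\Omega=\ker\nu$.

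For part (a), the identity $(\rad\fa)A=\rad A$ inductively yields $\rad^s_AP=\rad^s_\fa P$ for all $s$ (as in the proof of Proposition \ref{grProp1}), so $\nu$ is also a projective cover of $M|_\fa$ in $\famod$. To produce the grading, I would invoke the Wedderburn complement: since $A_0\cong A/\rad A$ is semisimple, the short exact sequence $0\to\Omega\to\nu^{-1}(M_0)\to M_0\to 0$ of $A_0$-modules splits (note $\nu^{-1}(M_0)\cap\rad P=\Omega$, because $M_0\cap\rad M=0$ and $\Omega\subseteq\rad P$). Choose an $A_0$-complement $P_0\subseteq\nu^{-1}(M_0)$; then $P_0\overset{\sim}{\to}M_0$ via $\nu$ and $P_0\overset{\sim}{\to}P/\rad P$ via the quotient map. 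Because $\fa$ is tightly graded and $P|_\fa$ is a projective cover of $P_0$, we may identify $P|_\fa\cong\fa\otimes_{\fa_0}P_0$ as $\fa$-modules, and the assignment $P_i:=\fa_iP_0$ puts $P$ into $\fagrmod_0$. Since $\nu(P_i)=\fa_i\nu(P_0)=\fa_iM_0=M_i$, the map $\nu$ is graded, hence a projective cover in $\fagrmod$.

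For part (b), $\Omega$ inherits a graded $\fa$-structure $\Omega=\bigoplus_{s\geq 1}\Omega_s$ with $\Omega_s=\Omega\cap P_s$ (and $\Omega_0=0$, since $\nu|_{P_0}$ is an isomorphism). As $\rad^s P=\bigoplus_{i\geq s}P_i$, there is a natural graded $\fa$-isomorphism $\gr^\#\Omega\cong\Omega$, sending grade $s$ on the left to $\Omega_s$. The substantive step, which I expect to be the main obstacle, is to verify that under this identification the head of $\gr^\#\Omega$ as a $\gr A$-module coincides with its head as an $\fa$-module, both being $\Omega/\rad\Omega$. Equivalently, $(\rad\gr A)\gr^\#\Omega=\rad\Omega$. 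The inclusion $(\rad\gr\fa)\gr^\#\Omega=(\rad\fa)\Omega=\rad\Omega\subseteq(\rad\gr A)\gr^\#\Omega$ is automatic from the graded algebra map $\gr\fa\to\gr A$. For the reverse direction, take $a\in\rad^iA$ with $i\geq 1$ and $y\in\Omega_s$; invoking $\rad^iA=(\rad\fa)^iA$, write $a=\sum_k r_kx_k$ with $r_k\in\rad^i\fa$ and $x_k\in A$. Then $ay=\sum_k r_k(x_ky)\in(\rad\fa)\Omega=\rad\Omega$, so every graded component of $ay$---in particular the grade-$(s+i)$ component representing $[a][y]$ in $\gr^\#\Omega$---lies in $\rad\Omega$.

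With this head computation in hand, the theorem follows formally. The $\gr$-construction $\Ext^1_A(M,L)\to\Ext^1_{\gr A}(\gr M,L)$ factors, via the composite (\ref{injection}), through the injection $\Hom_A(\Omega/\rad\Omega,L)\hookrightarrow\Hom_{\gr A}(\gr^\#\Omega,L)$. Since $L$ is completely reducible and the head of $\gr^\#\Omega$ as a $\gr A$-module is $\Omega/\rad\Omega$, every $\gr A$-homomorphism $\gr^\#\Omega\to L$ factors through $\Omega/\rad\Omega$; hence this injection is also surjective, and so the $\gr$-construction is an isomorphism. By Remark \ref{grRem2}(a), the result is independent of the choices made in constructing $\gr$. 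The whole argument hinges on the identity $(\rad\fa)A=\rad A$, which is what allows the $\gr A$-action on $\gr^\#\Omega$ to be collapsed to the tightly graded $\fa$-action.
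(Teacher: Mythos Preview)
Your proof is correct and follows essentially the same approach as the paper's. In part (a), the paper works abstractly with the specific cover $A\otimes_{A_0}M_0\to M$ and shows the natural map $\fa\otimes_{\fa_0}M_0\to A\otimes_{A_0}M_0$ is an isomorphism of $\fa$-modules (surjective onto a projective of the same head, hence split and bijective); you instead lift $M_0$ to $P_0\subseteq P$ and use the multiplication map $\fa\otimes_{\fa_0}P_0\overset\sim\to P$ to grade $P$ in place. These are the same idea. In part (b), the paper's argument is terse---it asserts that, once one reads diagram (\ref{commdiagram}) at the level of graded $\fa$-modules, it is ``obvious'' that the kernel of $\gr^\#\Omega\twoheadrightarrow\gr^\#(\Omega/\rad\Omega)$ is $\rad(\gr^\#\Omega)$---and your explicit verification that $(\rad\gr A)\gr^\#\Omega$ corresponds to $\rad\Omega$ under the identification $\gr^\#\Omega\cong\Omega$ (using $\rad^iA=(\rad^i\fa)A$ to push the $\gr A$-action through $\fa$) is exactly what that ``obvious'' hides.
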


\begin{proof} We first prove (a). Observe that
$$A\otimes_{A_0}M_0\longrightarrow M,\quad a\otimes m
\mapsto am,$$
 is a projective
cover of $M$ in $\Amod$. Similarly, $\fa\otimes_{\fa_0}M_0\to M$, $x\otimes m\mapsto xm$, is a $\fa_0$-projective cover. However, the map $\fa\otimes_{\fa_0}M_0\to A\otimes_{A_0}M_0$ is surjective, since it covers the head of $A\otimes_{A_0}M_0$.
But, since some projective cover $P$ of $M$ as an $A$-module is also $\fa$-projective, all such covers must be $\fa$-projective. Hence, the map
$\fa\otimes_{\fa_0}M_0\to A\otimes_{A_0}M_0$ is a split surjection, and thus an isomorphism of $\fa$-modules (since the $\fa$-heads of $A\otimes_{A_0}M_0$ and
$\fa\otimes_{\fa_0}M_0$ are isomorphic
to $M_0$).

Now consider (b). Since any $\gr:\Ext^1_A(M,L)\to\Ext^1_{\gr A}(\gr M,L)$ is an injection, it will be an isomorphism if
and only if it is an isomorphism for some particular choice of $\nu$ and $\omega$. Thus, by (a), we can assume that $P\overset\nu\to
M$ restricts to a projective cover in $\fagrmod$. In the commutative diagram (\ref{commdiagram}), we can take $\gr$ to be
either $\gr_\fa$ or $\gr_A$, and $\rad$ to be $\rad_\fa$ or $\rad_A$, without changing the spaces. But at the level
of graded $\fa$-modules, it is obvious that the kernel of $\gr^\#\Omega\twoheadrightarrow\gr^\#(\Omega/\rad\Omega)$ is precisely
$\rad(\gr^\#\Omega)$. Thus, the mapping $\gr:\Ext^1_A(M,L)\to\Ext^1_{\gr A}(\gr M,L)$ is an isomorphism.
 \end{proof}

We conclude this section by recording the elementary result mentioned in footnote 3. It is interesting that
the hypotheses (and thus the conclusions) remain valid when $A$ is replaced by an algebra $B$ which
is a homomorphic image of $A$.

\begin{prop}\label{elemresult} Let $\fa\to A$ be a homomorphism of algebras such that, if $A$ is regarded
as a left $\fa$-module, then $(\rad \fa)A=\rad A$. Then, for $A$-modules $M,L$ with $L$ completely
reducible, the natural map $\Ext^1_A(M,L)\to\Ext^1_\fa(M,L)$ is injective. \end{prop}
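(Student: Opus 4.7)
The plan is to translate the injectivity statement into a splitting statement: an element of $\Ext^1_A(M,L)$ that dies in $\Ext^1_\fa(M,L)$ is represented by a short exact sequence $0\to L\to E\to M\to 0$ of $A$-modules that splits after restriction to $\fa$, and I want to show it already splits over $A$. So I fix such an $E$ with an $\fa$-module decomposition $E=L\oplus N$, and try to construct an $A$-submodule complement to $L$ in $E$.

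The first observation is the identity $\rad_A E=\rad_\fa E$ as subsets of $E$. Indeed, the hypothesis $(\rad\fa)A=\rad A$ gives, for any $A$-module $V$,
$$\rad_A V \;=\; (\rad A)V\;=\;(\rad\fa)A\cdot V\;=\;(\rad\fa)V\;=\;\rad_\fa V.$$
In particular this applies to $V=L$: since $L$ is completely reducible as an $A$-module, $(\rad A)L=0$, so $(\rad\fa)L=0$ and $L$ is completely reducible as an $\fa$-module as well. Applied to $V=E$, it gives $\rad_A E=\rad_\fa E$.

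Next I use the $\fa$-splitting. In the decomposition $E=L\oplus N$ of $\fa$-modules,
$$\rad_\fa E \;=\;\rad_\fa L\;\oplus\;\rad_\fa N\;=\;0\oplus\rad_\fa N\;\subseteq\; N,$$
so $L\cap\rad_\fa E=0$, and hence $L\cap\rad_A E=0$ by the previous paragraph. Consequently the composition $L\hookrightarrow E\twoheadrightarrow E/\rad_A E$ is injective, and the induced sequence
$$0\longrightarrow L\longrightarrow E/\rad_A E\longrightarrow M/\rad_A M\longrightarrow 0$$
is a short exact sequence of $A$-modules in which the middle term is semisimple (being killed by $\rad A$). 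It therefore splits as $A$-modules, producing an $A$-submodule $\bar N\subseteq E/\rad_A E$ complementary to $L$.

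Finally I lift: let $N^\flat\subseteq E$ be the preimage of $\bar N$ under $E\twoheadrightarrow E/\rad_A E$, an $A$-submodule containing $\rad_A E$. Because $\bar N+L=E/\rad_A E$ one has $N^\flat+L=E$, and if $x\in N^\flat\cap L$ then its image lies in $\bar N\cap L=0$, so $x\in L\cap\rad_A E=0$. Hence $E=L\oplus N^\flat$ as $A$-modules, so $\xi=0$ in $\Ext^1_A(M,L)$. The only delicate point is step two, namely, arranging $L\cap\rad_A E=0$; this is precisely where both the hypothesis $(\rad\fa)A=\rad A$ and the $\fa$-splitting are used, and once it is in hand the rest is essentially a semisimple lifting argument. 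Note that the same argument goes through if $A$ is replaced by any algebra $B$ for which $(\rad\fa)B=\rad B$ still holds, for instance a quotient $B=A/J$ with $J\subseteq\rad A$, which accounts for the remark preceding the proposition.
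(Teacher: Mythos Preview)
Your proof is correct and uses the same key identity $\rad_A E=\rad_\fa E$ as the paper. The paper's argument is slightly shorter: it first reduces to $L$ irreducible, after which ``non-split over $A$'' is equivalent to $L\subseteq\rad_A E$, and ``non-split over $\fa$'' is equivalent to $L\subseteq\rad_\fa E$; since the two radicals coincide, injectivity is immediate, and your explicit lifting from $E/\rad_A E$ becomes unnecessary.
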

\begin{proof} It suffices to treat the case in which $L$ is irreducible. If $(^*)\,\,0\to L\to E\to M\to 0$ is a
non-split extension of $M$ by $L$ in $\Amod$, then $L\subseteq\rad E$. But $\rad E=(\rad A)E=
(\rad a)AE=\rad_\fa E$, the radical of $E$ as an $\fa$-module. This means that (*) is a non-split extension
of $M$ by $L$ when the modules are restricted to $\fa$. This proves the required injectivity. \end{proof}

\section{Quasi-hereditary algebras}
 Let $A$ be a quasi-hereditary
algebra (QHA) over a field $K$ with finite (weight) poset $\Lambda$.  For $\lambda\in\Lambda$, let $L(\lambda)$, $\Delta(\lambda)$, $\nabla(\lambda)$ are the corresponding
irreducible, standard, and costandard modules, respectively, in the highest weight category (HWC) $\Amod$. Thus,
$\Delta(\lambda)$ (resp., $\nabla(\lambda)$) has head (resp., socle) isomorphic to $L(\lambda)$, and other composition factors
$L(\mu)$ satisfy $\mu<\lambda$ in the poset $\Lambda$. Assume that $A$ is {\it split}, i.~e.,
$\dim\End_A(L(\lambda))\cong K$ for all $\lambda\in\Lambda$. Let $P(\lambda)\twoheadrightarrow L(\lambda)$
(resp., $\nabla(\lambda)\hookrightarrow Q(\lambda)$) be the projective cover (resp., injective envelope) of $L(\lambda)$.   Let $\Amod(\Delta)$ (resp., $\Amod(\nabla)$) be the full
subcategory of $\Amod$ consisting of modules with a $\Delta$-filtration (resp., $\nabla$-filtration), i.~e., have filtrations
with sections $\Delta(\lambda)$ (resp., $\nabla(\lambda)$), $\lambda\in\Lambda$.   Thus,  $P(\lambda)\in\Amod(\Delta)$ and $Q(\lambda)\in\Amod(\nabla)$.
In any $\Delta$-filtration (resp., $\nabla$-filtration) of $P(\lambda)$ (resp., $Q(\lambda)$), the top (resp., bottom) section
is isomorphic to $\Delta(\lambda)$ (resp., $\nabla(\lambda)$); other sections are isomorphic to $\Delta(\mu)$ (resp., $\nabla(\mu)$)
for some $\mu>\lambda$.

If $\lambda,\mu\in\Lambda$ and $n\in\mathbb N$, then, by \cite[Lemma 2.2]{CPS1},
\begin{equation}\label{elemordering}
 \dim\Ext^n_A(\Delta(\lambda),\nabla(\mu))=\delta_{\lambda,\mu}\delta_{n,0}. \end{equation}

For a nonempty poset ideal  $\Gamma$ in $\Lambda$,  let $\Amod[\Gamma]$ is the full subcategory of the
module category $\Amod$ consisting of
modules all of whose composition factors $L(\gamma)$ satisfy $\gamma\in\Gamma$. There exists an idempotent ideal (sometimes called a defining ideal) $J$ of $A$ such that
$\Amod[\Gamma]$ is Morita equivalent to $A/J$--mod.  In addition, $A/J$ is a QHA. The exact inclusion functor 
$i_*:\Amod[\Gamma]\to\Amod$ (which is just inflation through the map $A\to A/J$) admits a
left adjoint $i^*:\Amod\to\Amod[\Gamma]$ and a right adjoint $i^!:\Amod\to\Amod[\Gamma].$ Explicitly, given $M\in\Amod$, $i^*M$ (resp.,
$i^!M$) is the largest quotient module (resp., submodule) of $M$ lying in $\Amod[\Gamma]$. It will be convenient, given $M\in\Amod$, to denote
$i^*M$ also by $M_\Gamma$. A basic property states that, given $M,N\in\Amod[\Gamma]$, there is a natural isomorphism (preserving grades)
\begin{equation}\label{derivedembedding} \Ext^\bullet_{A/J}(M,N)\cong\Ext^\bullet_A(i_*M,i_*N).\end{equation}
Generally, we denote $i_*M$ simply by $M$, so that $\Ext^\bullet_{A/J}(M,N)\cong\Ext^\bullet_A(M,N)$.

The QHA algebra $A/J$ above is often denoted simply $A_\Gamma$. In fact, it is the largest quotient module
of $A$ (regarded as a left $A$-module) whose composition factors $L(\gamma)$ satisfy $\gamma\in\Gamma$.

\begin{prop}\label{grProp3} Let $P$ be a projective module for a finite dimensional algebra $A$. Let $L\in\Amod$ be completely reducible and
 let $r\geq 0$.

(a) The natural map $\Ext^1_A(P/\rad^rP,L)\to\Ext^1_A(\rad^{r-1}P/\rad^rP,L)$ induced by the
inclusion $\rad^{r-1}P/\rad^rP\hookrightarrow P/\rad^rP$ is injective.

(b) If $A$ is a QHA, $\Gamma$  is a poset ideal in $\Lambda$, and all weights of composition factors of $L$ lie
in $\Gamma$, then the natural map
$\begin{CD}\Ext^1_A(P_\Gamma/\rad^r P_\Gamma,L) @>>> \Ext^1_A(\rad^{r-1}P_\Gamma/\rad^rP_\Gamma,L)\end{CD}$
is an injection.

(c)  Continuing (b), if
$\dim\Ext^1_A(P_\Gamma/\rad^rP_\Gamma,L)=\dim\Ext^1_{\gr A}(\gr(P_\Gamma/\rad^rP_\Gamma),L)$, then
the restriction map $\Ext^1_{\gr A}(\gr(P_\Gamma/\rad^rP_\Gamma),L)\to\Ext^1_{\gr A}(\rad^{r-1}P_\Gamma/\rad^rP_\Gamma,L)$
is an injection.\end{prop}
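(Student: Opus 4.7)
The plan is to attack (a) directly by homological algebra on a short exact sequence, then obtain (b) as a transfer of (a) to the quotient algebra $A/J$, and finally derive (c) from a $2\times2$ commutative diagram whose three ``easy'' sides are already injective.

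For (a), apply $\Hom_A(-,L)$ to the short exact sequence
$$0\longrightarrow\rad^{r-1}P/\rad^r P\longrightarrow P/\rad^r P\overset{q}\longrightarrow P/\rad^{r-1}P\longrightarrow 0.$$
By the long exact sequence, the kernel of the map in (a) equals the image of $q^*\colon\Ext^1_A(P/\rad^{r-1}P,L)\to\Ext^1_A(P/\rad^r P,L)$, so it suffices to prove $q^*=0$. Given a class represented by $\xi\colon 0\to L\to F\to P/\rad^{r-1}P\to 0$, the pullback $q^*\xi$ splits if and only if $q$ lifts to a map $P/\rad^r P\to F$. Using projectivity of $P$ we lift the composite $P\twoheadrightarrow P/\rad^{r-1}P$ to a map $\tilde\psi\colon P\to F$; the question is whether $\tilde\psi(\rad^r P)=0$. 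The crucial observation is that $\rad^r F=0$: indeed, $\rad^{r-1}F$ projects into $\rad^{r-1}(P/\rad^{r-1}P)=0$, hence $\rad^{r-1}F\subseteq L$; then $\rad^r F=(\rad A)\rad^{r-1}F\subseteq(\rad A)L=0$ since $L$ is semisimple. Thus $\tilde\psi(\rad^r P)\subseteq\rad^r F=0$, and $\tilde\psi$ descends to the required lift, splitting $q^*\xi$.

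For (b), apply (a) to the quasi-hereditary algebra $A/J$ and the $(A/J)$-projective module $P_\Gamma=P/JP$ (a quotient of a projective $A$-module, so projective over $A/J$). Because all composition factors of $P_\Gamma$ lie in $\Gamma$, the iterated radical $\rad^r P_\Gamma$ agrees whether computed over $A$ or $A/J$, and the relevant $\Ext^1$ groups agree via (\ref{derivedembedding}). Transplanting the injectivity of (a) back to $A$ gives (b).

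For (c), consider the diagram
$$\begin{CD}\Ext^1_A(P_\Gamma/\rad^r P_\Gamma,L)@>>>\Ext^1_A(\rad^{r-1}P_\Gamma/\rad^r P_\Gamma,L)\\ @VVV @VVV\\ \Ext^1_{\gr A}(\gr(P_\Gamma/\rad^r P_\Gamma),L)@>>>\Ext^1_{\gr A}(\rad^{r-1}P_\Gamma/\rad^r P_\Gamma,L)\end{CD}$$
whose vertical maps come from the $\gr$-construction (\ref{naturaltransformation}), so are injective by (\ref{injection}), and whose commutativity is immediate from the naturality statement preceding (\ref{naturaltransformation}). The top row is injective by (b); the right-hand vertical is an isomorphism because $\rad^{r-1}P_\Gamma/\rad^r P_\Gamma$ is completely reducible (Corollary \ref{grCor2}); and the dimension hypothesis promotes the left-hand vertical from injection to isomorphism. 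Expressing the bottom row as the composite (right vertical)$\,\circ\,$(top row)$\,\circ\,$(left vertical)$^{-1}$ exhibits it as a composition of injections, so it is injective. The main obstacle is really the vanishing $\rad^r F=0$ in part (a): this is the unique non-formal input, since (b) is a transfer and (c) is purely diagrammatic.
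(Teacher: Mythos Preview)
Your arguments for (a) and (b) are correct. For (a) you give a direct splitting argument (showing $\rad^r F=0$ and hence the lift of $P$ descends), while the paper runs an equivalent diagram chase; both are fine, and your version is arguably cleaner. Part (b) is the same as the paper's.

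The gap is in (c). You assert that the square commutes ``from the naturality statement preceding (\ref{naturaltransformation})'', but that naturality is in the second variable $L$, not in the first variable $M$. The $\gr$-construction $\gr_{A,M,\nu,\omega}$ depends on a projective cover $\nu$ and a non-canonical isomorphism $\omega$ chosen separately for each $M$; nothing in \S3 asserts compatibility of these choices under a map $M'\hookrightarrow M$ in the first variable. Your square has the $\gr$-construction for $M=P_\Gamma^{0,r}$ on the left and for $M'=P_\Gamma^{r-1,r}$ on the right, and the horizontal maps come from the inclusion $M'\hookrightarrow M$ (respectively $M'\hookrightarrow\gr M$); to make these commute you would need to lift the inclusion to a map of projective covers and then verify that the induced map on $\gr^\#(\Omega/\rad\Omega)$'s intertwines the two chosen $\omega$'s. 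That is not automatic and is not what the cited naturality says.

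The paper's proof of (c) avoids this issue entirely: it does not use such a square. Instead it constructs, by hand, a single extension $0\to L^{\oplus a}\to E\to P_\Gamma^{0,r}\to 0$ (with $a=\dim\Ext^1_A(P_\Gamma^{0,r},L)$) in which $L^{\oplus a}$ is forced to sit exactly as $\rad^rE$; this uses a nontrivial claim that no composition factor of $\Omega/(\rad\Omega+\rad^rP)$ lies in $\Gamma$. Applying $\gr$ to $E$ then yields a graded extension whose push-outs exhibit a basis of $\Ext^1_{\gr A}(\gr P_\Gamma^{0,r},L)$, each of which visibly stays non-split upon restriction to the degree $r-1$ piece. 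The remark following the paper's proof even notes that reinterpreting this via the $\gr$-construction requires ``an appropriate choice'', which is precisely the subtlety your argument glosses over.
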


\begin{proof} We first prove (a). The commutative diagram
 $$\begin{CD} 0 @>>> \rad^{r-1}P @>>> P @>>> P/\rad^{r-1}P @>>> 0\\
@. @VVV @VVV @VVV @. \\
0 @>>> \rad^{r-1}P/\rad^rP @>>> P/\rad^rP @>>> P/\rad^{r-1}P @>>> 0 \end{CD}
$$
has exact rows, so the long exact sequence of $\Ext^\bullet_A$ provides a commutative diagram
$$\begin{CD}  \Hom_A(\rad^{r-1}P,L) @<\sim<< \Hom_A(\rad^{r-1}P/\rad^rP,L)\\
@VVV   @VV{\alpha}V \\
\Ext^1_A(P/\rad^{r-1}P,L) @= \Ext^1_A(P/\rad^{r-1}P,L)\\
@VVV @VVV\\
\Ext^1_A(P,L) @<<< \Ext^1_A(P/\rad^rP,L)\\
@. @VV{\beta}V \\
@. \Ext^1_A(\rad^{r-1}P/\rad^rP,L)\end{CD}
$$
 in which the columns are exact. To show that $\beta$ is injective, we show that
$\alpha$ is surjective. However, because $L$ is completely reducible, the top horizontal map
is an isomorphism. Since $P$ is projective, $\Ext^1_A(P,L)=0$. The result follows by a diagram
chase.

Now (b) follows from (a), using (\ref{derivedembedding}) and the fact that if $P\in\Amod$ is a projective, then $P_\Gamma:=j^*P\in\Amod[\Gamma]$ is
projective.

To prove (c), write $P_\Gamma^{0,r}$ for $P_\Gamma/\rad^rP_\Gamma$ and $P_\Gamma^{r-1,r}$ for $\rad^{r-1}P_\Gamma/
\rad^rP_\Gamma$. Without loss of generality, we can assume that $L$ is irreducible. Let $\Omega$ be the kernel of the surjection
$P\twoheadrightarrow P_\Gamma^{0,r}$.

\noindent{\underline{Claim.}} No composition factor $L(\nu)$ of $\Omega/(\rad\Omega+\rad^rP)$
satisfies $\nu\in\Gamma$.  Otherwise, there is a nonzero homomorphism $\Omega\to L(\nu)$ with kernel $R\supseteq\rad\Omega+\rad^rP$
and with $\nu\in\Gamma$.
The composition factors of $M:=P/R$ are just those of $P_\Gamma^{0,r}=P/\Omega$ and $L(\nu)$. Thus, all composition factors
of $M$ have highest weight in $\Gamma$. Since $\head M=\head P=\head P_\Gamma$, $M$ is a homomorphic image of $P_\Gamma$.
Since $M$ is a homomorphic image of $P/\rad^rP$ by construction, its radical length is at most $r$. Hence, $M$ is
a homomorphic image of $P_\Gamma/\rad^rP_\Gamma=P_\Gamma^{0,r}$. However, $\dim M >\dim P_\Gamma^{0,r}$, contradicting
the existence of $L(\nu)$, and proving the claim.

Now put $Y=\rad\Omega+\rad^rP$, and write $\Omega/\rad\Omega=X/\rad\Omega\oplus Y/\rad\Omega$, a direct sum of (completely reducible)
$A$-submodules. for some $A$-submodule $X$ of $\Omega$ containing $\rad\Omega$. By the previous paragraph, $L$ is not a composition factor of $X/\rad\Omega$. Let $a=\dim\Ext^1_A(P_\Gamma^{0,r},L)$. Obviously, $\Omega/\rad\Omega$, and thus $\Omega/X$ has a quotient
module $\Omega/Z$ isomorphic to $L^{\oplus a}$ (since $\dim\Hom_A(\Omega,L)=\dim\Ext^1_A(P_\Gamma^{0,r},L)=a$). Put
$E=P/Z$. This gives an exact sequence $0\to L^{\oplus a}\to E\to P_\Gamma^{0,r}\to 0$. Notice
$L^{\oplus a}\cong\Omega/Z=(X+\rad^rP)/Z= (Z+\rad^rP)/Z=(Z+ (\rad^rA)P)/Z\cong \rad^rA(P/Z)$
identifies with $\rad^r E$.  Passage to $\gr E$ gives an exact sequence $0\to (L^{\oplus a})(r)\to \gr E\to
\gr P_\Gamma^{0,r}\to 0$ of graded $\gr A$-modules. Regarding $\gr E$ as an ungraded extension, the push-out $\rho_\pi$ via any projection $\pi:L^{\oplus a}\to L$ is non-split. (It clearly arises from a graded push-out in which $L^{\oplus a}$ and $L$ have degree $r$. Observe that $\gr E$ and, thus, its graded push-out---a homomorphic image---are generated in degree
$0$.) Let $\pi_1,\cdots,\pi_a$ be the standard projections $L^{\oplus a}\to L$, and put $\rho_i=\rho_{\pi_i}$. Let
$c_1,\cdots, c_a\in K$ and let $\pi=\sum c_i\pi_i:L^{\oplus a}\to L$. All maps $L^{\oplus a}\to L$ are obtained in this
way, and $\pi\not=0$ if and only if some $c_i\not=0$. The push-out $\rho_\pi$ is $\sum c_i\rho_{\pi_i}=\sum c_i\rho_i$.
In particular, $\sum c_i\rho=0$ if and only if $\rho_\pi=0$, which occurs if and only if $\pi=0$, or, equivalently,
all $c_i=0$. That is, $\rho_1,\cdots, \rho_a$ are linearly independent in $\Ext^1_{\gr A}(\gr P_\Gamma^{0,r},L)$. By
dimension considerations, these elements form a basis for $\Ext^1_{\gr A}(\gr P_\Gamma^{0,r},L)$.
  Also, if $\pi\not=0$, then $\rho_\pi$ remains non-split upon pull-back through the map $P_\Gamma^{r-1,r}\hookrightarrow (\gr P_\Gamma^{0,r})_{r-1}\subseteq P_\Gamma^{0,r}$.
(Observe that the $\gr A$-module $(\gr E)_{r-1}\oplus (\gr E)_r$ is generated in
 degree $r-1$.) This pull-back is the image of $\rho_\pi$ under the restriction map $\Ext^1_{\gr A}(P_\Gamma^{0,r}\to\Ext^1_{\gr A}(P_\Gamma^{r-1,r},L)$.
 This proves (c).
  \end{proof}

\begin{rem} After establishing the claim in the proof of (c), the proof may
be concluded by using an appropriate choice of the $\gr$-construction. Indeed, the implicit correspondence
of push-outs of $\gr E$ and $E$ in the proof as given can be seen to arise from a $\gr$-construction. The $\gr$ construction behaves here similarly as in the context of
    Theorem \ref{grThm1} and Remarks \ref{grRem2}(b,c). Note also that, by the injectivity (\ref{injection}) of the $\gr$-construction, the equality of dimensions assumed in the hypothesis of (c) is equivalent                                             to an isomorphism via the $\gr$-construction. In practice, this is checked
    using Theorem \ref{movinglemma}(b).
\end{rem}

Suppose that $A=\bigoplus_{n\geq 0}A_n$ is a graded QHA with weight poset $\Lambda$; see \cite{CPS1a}, \cite{CPS1}, \cite{CPS2},  \cite{CPS5}. For each $\lambda\in\Lambda$, $L(\lambda)\in\Agrmod$ denotes the
irreducible $A$-module $L(\lambda)$ viewed as a graded $A$-module concentrated in grade 0.
Also, $\Delta(\lambda),\nabla(\lambda),P(\lambda),Q(\lambda)\in\Agrmod$. We can assume that $\Delta(\lambda)$
and $P(\lambda)$ are generated in grade 0, and that, dually,  $\soc\nabla(\lambda)\subseteq\nabla(\lambda)_0$ and $\soc Q(\lambda)\subseteq Q(\lambda)_0$.
Furthermore, $P(\lambda)$ has a filtration by graded submodules with sections of the form $\Delta(\nu)(s)$, $s\geq 0$. (Thus, the ``top" section of
$P(\lambda)$ is $\Delta(\lambda)$ in $\Agrmod$.) Of course, $P(\lambda)\to\Delta(\lambda)$ is the projective cover in $\Agrmod$. Similar statements
hold for $Q(\lambda)$.

 \begin{prop}\label{gradedfiltrations} Suppose $A$ is a graded QHA over a field $K$. Let $Q$ be a finite dimensional graded $A$-module which has
a filtration by graded submodules with sections  $\nabla(\tau)(s)$, $\tau\in\Lambda$, $-s\in\mathbb N$. For
$\tau\in\Lambda$, $-s\in\mathbb Z$, the number $[Q:\nabla(\lambda)(s)]$  of occurrences of $\nabla(\tau)(s)$ as a section in such a filtration
is given by
$[Q:\nabla(\tau)(s)]=\dim\grHom_{A}(\Delta(\lambda)(s),Q(\mu)).$ \end{prop}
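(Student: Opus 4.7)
The plan is to imitate the standard (ungraded) proof that $[Q:\nabla(\tau)] = \dim\Hom_A(\Delta(\tau),Q)$ for $Q\in\Amod(\nabla)$, which is a formal consequence of the $\Delta$-$\nabla$ Ext-orthogonality (\ref{elemordering}). The only new ingredient needed is the graded refinement of (\ref{elemordering}), after which the result falls out of induction on the length of a graded $\nabla$-filtration of $Q$.

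First I will establish the graded orthogonality
\[
\dim\grExt^n_A(\Delta(\lambda)(s),\nabla(\mu)(t)) \;=\; \delta_{\lambda,\mu}\,\delta_{s,t}\,\delta_{n,0},
\qquad \lambda,\mu\in\Lambda,\ s,t\in\mathbb Z,\ n\in\mathbb N.
\]
Using the shift $\grExt^n_A(\Delta(\lambda)(s),\nabla(\mu)(t))\cong\grExt^n_A(\Delta(\lambda),\nabla(\mu)(t-s))$, together with (\ref{Extgrouprelationstograded}), each such space is a direct summand of the ungraded group $\Ext^n_A(\Delta(\lambda),\nabla(\mu))$, which by (\ref{elemordering}) is $0$ unless $\lambda=\mu$ and $n=0$, in which case it is one-dimensional. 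Moreover, since $\Delta(\lambda)$ is generated in grade $0$ and $\soc\nabla(\lambda)\subseteq\nabla(\lambda)_0$, the canonical map $\Delta(\lambda)\twoheadrightarrow L(\lambda)\hookrightarrow\nabla(\lambda)$ is a nonzero homomorphism of pure grade $0$. It therefore accounts for the one-dimensional summand with $r=t-s=0$, and the remaining summands must vanish.

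Next, proceed by induction on the length $\ell$ of a graded $\nabla$-filtration $0=Q_0\subset Q_1\subset\cdots\subset Q_\ell=Q$, with $Q_i/Q_{i-1}\cong\nabla(\tau_i)(s_i)$ in $\Agrmod$. The base case $\ell=0$ is trivial. For the inductive step, apply the graded functor $\grHom_A(\Delta(\tau)(s),-)$ to the short exact sequence
\[
0\longrightarrow \nabla(\tau_1)(s_1)\longrightarrow Q\longrightarrow Q/Q_1\longrightarrow 0
\]
in $\Agrmod$. Because $\grExt^1_A(\Delta(\tau)(s),\nabla(\tau_1)(s_1))=0$ by the orthogonality just established, the long exact sequence yields
\[
\dim\grHom_A(\Delta(\tau)(s),Q) \;=\; \delta_{\tau,\tau_1}\delta_{s,s_1} + \dim\grHom_A(\Delta(\tau)(s),Q/Q_1).
\]
By induction the second summand equals $[Q/Q_1:\nabla(\tau)(s)]$, so the total equals $[Q:\nabla(\tau)(s)]$, as required. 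The filtration-multiplicity $[Q:\nabla(\tau)(s)]$ is independent of the filtration because the right-hand side is.

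The only real obstacle is the graded orthogonality; once it is in hand, the argument is purely formal. In checking it, the subtle point is identifying \emph{which} shift $r$ carries the one-dimensional contribution to the ungraded Hom. This is fixed by the standing normalization that $\Delta(\lambda)\in\Agrmod_0$ and $\soc\nabla(\lambda)\subseteq\nabla(\lambda)_0$, which forces the canonical map to have pure grade $0$; this same normalization is what makes the proposition's indexing by $(\tau,s)$ unambiguous.
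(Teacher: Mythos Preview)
Your proof is correct and follows essentially the same approach as the paper. The paper's argument is simply the one-line observation that $\dim\grExt^n_{A}(\Delta(\lambda)(r),\nabla(\mu)(s))=\delta_{n,0}\delta_{\lambda,\mu}\delta_{r,s}$ (deduced from (\ref{elemordering}) via (\ref{Extgrouprelationstograded})) ``implies the stated result''; you have supplied the details the paper leaves implicit, namely the identification of the shift carrying the nonzero Hom and the induction on filtration length via the long exact sequence.
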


 By (\ref{elemordering}), $\dim\,\grExt^n_{A}(\Delta(\lambda)(r),\nabla(\mu)(s))=\delta_{n,0}\delta_{\lambda,\mu}\delta_{r,s},$
 which implies the stated result.
If $Q=Q(\mu)$ is the injective envelope in $\Agrmod$ of $L(\mu)$, then
$ [Q(\mu):\nabla(\tau)(s)]=[\Delta(\tau)_{-s}:L(\mu)]$,
a graded form of  Brauer-Humphreys reciprocity; see \cite[Prop. 1.2.4(a)]{CPS2}.

Let $A$ be an arbitrary QHA. As defined in \cite[\S2]{CPS5}, the HWC $\Amod$ (or just $A$)
satisfies (by definition) the (SKL$'$) condition\footnote{A stronger condition, in which $L(\mu)$ is replaced by $\nabla(\mu)$ in the
 first line and by $\Delta(\mu)$ in the second line of (SKL$'$) is called the strong Kazhdan-Lusztig property (SKL). It can be shown that (SKL) $\implies$ (SKL$'$), and that
 (SKL) is equivalent to (SKL$'$) holding for all algebras $eAe$, with $e$ an idempotent associated to a poset coideal in $\Lambda$; see \cite[\S2.4]{CPS5}. Unfortunately,
 not all these idempotents may be chosen from the subalgebra $\fa$ we will consider later, and so we work with (SKL$'$) only, in dealing with
 $A$. The property (SKL) will hold for $\gr A$ once (SKL$'$) is established for $A$ \cite[Thm. 2.4.2]{CPS5}.}
 with respect to a function $l:\Lambda\to\mathbb Z$, provided, that for $\lambda,\mu\in\Lambda$,
$n,i\in{\mathbb N}$,
$$
\begin{cases}(1)\,\,\,\Ext^n_A(\Delta^i(\lambda),L(\mu))\not=0\implies \,n\equiv l(\lambda)-l(\mu)+i; \,{\text{\rm and}} \\
(2)\,\,\,\Ext^n_A(L(\mu),\nabla_{-i}(\lambda))\not=0\implies n\equiv l(\lambda)- l(\mu)+i.\end{cases}
\leqno({\text{\rm SKL}}')$$
Here $\Delta^i(\lambda):=\rad^i\Delta(\lambda)$ and $\nabla_{-i}(\lambda):=\nabla(\lambda)/\soc_{-i}\nabla(\lambda)$.
If $\Gamma$ is a poset ideal in $\Lambda$,  (\ref{derivedembedding}) implies that
if $\Amod$ satisfies (SKL$'$), then  $A/J$-mod also satisfies (SKL$'$) with respect
to $l|_\Gamma$. Indeed, it is only necessary that condition (SKL$'$) holds for $\lambda,\mu\in\Gamma$.
Similar remarks hold for the (KL) property discussed in the next paragraph.

If condition (SKL$'$) holds for  $i=0$, then $\Amod$ is said to satisfy the Kazhdan-Lusztig property (KL) (with respect to $l:\Lambda\to\mathbb Z$). More precisely, the (KL) property holds for $\Amod$ (with respect to $l:\Lambda\to\mathbb Z$)
provided that, for $\lambda,\mu\in\Lambda$, $n\in{\mathbb N}$,
$$
\begin{cases}(1)\,\,\,\Ext^n_A(\Delta(\lambda),L(\mu))\not=0\implies \,n\equiv l(\lambda)-l(\mu); \,{\text{\rm and}} \\
(2)\,\,\,\Ext^n_A(L(\mu),\nabla(\lambda))\not=0\implies n\equiv l(\lambda)- l(\mu).\end{cases}
\leqno({\text{\rm KL}})$$
 The property
(KL) implies surjectivity of the natural map
$\Ext^n_A(L(\lambda),L(\mu))\to\Ext^n_A(\Delta(\lambda),L(\mu))$ $\forall n\in{\mathbb N},$ $\forall \lambda,\mu\in\Lambda$;
 see \cite[Thm. 4.3]{CPS2}. This means that
 $\Ext^n_A(\Delta(\lambda),L(\mu))\to\Ext^n_A(\rad\Delta(\lambda),L(\mu))$ is the zero map 
 $\forall n\in{\mathbb N},  \forall \lambda,\mu\in\Lambda$.
 Thus,
\begin{equation}\label{afterSKL}\Ext^n_A(\Delta(\lambda),L(\mu))\to\Ext^n_A(\Delta^r(\lambda),L(\mu))\quad{\text{\rm is the zero map}},\quad\forall r>0,\forall n\in{\mathbb N},
 \forall \lambda,\mu\in\Lambda.\end{equation}

A graded QHA $A$ (or its graded module category) is said to have a graded Kazhdan-Lusztig
theory, or satisfy the graded Kazhdan-Lusztig property, with respect to $l:\Lambda\to\mathbb Z$, provided the following holds: Given $\lambda,\mu\in\Lambda$, if either $\grExt^n_{A}(\Delta(\lambda),L(\mu)(m))\not=0$ or
$\grExt^n_{A}(L(\mu),\nabla(\lambda)(m))\not=0$, then $m=n\equiv l(\lambda)- l(\mu)$ mod$\,2$. (Equivalently, given property (KL), the standard modules are linear and the costandard modules have a dual property.) 
In this case, $A$ is a Koszul
algebra; see \cite{CPS1}, \cite{CPS2}, \cite{CPS5}.  

Given $\lambda,\nu\in\Lambda$, the  Kazhdan-Lusztig polynomial $P_{\nu,\lambda}$ is defined
as\footnote{Strictly speaking, these are the left Kazhdan-Lusztig polynomial. The analogous right Kazhdan-Lusztig
polynomial $P_{\nu,\lambda}^R$ is defined by the same formula, but replacing $\dim\Ext^n_A(L(\lambda),\Delta(\nu))$
by $\dim\Ext^1_A(\Delta(\nu),L(\lambda))$. However, in all the cases we are interested in the category $\Amod$ has a
duality, so that $P_{\nu,\lambda}=P^R_{\nu,\lambda}$.}
\begin{equation}\label{polynomial}
P_{\nu,\lambda}= t^{l(\lambda)-l(\nu)}\sum_{n=0}^\infty \dim\Ext^n_A(L(\lambda),\Delta(\nu))t^{-n}\in{\mathbb Z}[t,t^{-1}].\end{equation}
In  practice (see \S8), $P_{\nu,\lambda}\in{\mathbb Z}[t^2]$ identifies with the Kazhdan-Lusztig polynomial of a Coxeter group.

\begin{thm} (\cite[Thm. 2.2.1, Rem. 2.2.2(b)]{CPS5})\label{SKLtheorem} If $A$ is a QHA such that ({\text{\rm SKL}}$'$) holds relative to $l:\Lambda\to\mathbb Z$, then $\gr A$ is a graded QHA with poset $\Lambda$. Also, the graded Kazhdan-Lusztig property holds
for $\gr A$, so that  $\gr A$ is a
Koszul algebra. In addition, $A$-mod and $\gr A$-mod have the same Kazhdan-Lusztig polynomials. Finally, the
standard (resp., costandard) module in $\gr A$--mod indexed by $\lambda\in \Lambda$ is $\gr\Delta(\lambda)$ (resp., $\gr\nabla(\lambda)$).\end{thm}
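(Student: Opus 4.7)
The plan is to first check that $\gr A$ is a graded quasi-hereditary algebra whose standard (resp., costandard) objects are $\gr\Delta(\lambda)$ (resp., $\gr\nabla(\lambda)$), viewed as graded $\gr A$-modules generated in grade $0$ (resp., with socle in grade $0$). From the heredity chain of idempotent ideals of $A$ one produces a compatible graded chain in $\gr A$; the composition factors and the highest-weight ordering on $\Lambda$ are inherited from those of $\Delta(\lambda)$ and $\nabla(\lambda)$, while the head (resp., socle) $L(\lambda)$ lives in grade $0$ by construction of $\gr$ (resp., $\gr^\bullet$). The key orthogonality
\[
\grHom_{\gr A}\bigl(\gr\Delta(\lambda),\gr\nabla(\mu)(s)\bigr)=\delta_{\lambda,\mu}\,\delta_{s,0}\,K
\]
will follow from Proposition \ref{gradedfiltrations} combined with (\ref{elemordering}), applied after the QHA structure of $\gr A$ is in place.

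The heart of the argument, and the main obstacle, is to prove the graded Kazhdan–Lusztig property
\[
\grExt^n_{\gr A}\bigl(\gr\Delta(\lambda),L(\mu)(m)\bigr)\neq 0
\;\Longrightarrow\;
m=n\equiv l(\lambda)-l(\mu)\pmod{2},
\]
together with the dual statement for $\gr\nabla(\lambda)$. The parity of $n$ will be supplied by (SKL$'$); the grade concentration $m=n$ is the delicate point. I would argue by induction on $n$. The cases $n=0,1$ follow directly from the $\gr$-construction: Theorem \ref{grThm1}, Corollary \ref{grCor2}, Theorem \ref{movinglemma}(b), and Remark \ref{grRem2}(b) together identify $\grExt^1_{\gr A}(\gr\Delta(\lambda),L(\mu)(m))$ for $m\neq 1$ with Ext-groups that vanish by (SKL$'$) on $A$. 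For the inductive step, apply $\gr$ to the short exact sequences
\[
0\to \rad^{i+1}\Delta(\lambda)\to \rad^{i}\Delta(\lambda)\to \rad^{i}\Delta(\lambda)/\rad^{i+1}\Delta(\lambda)\to 0
\]
producing, by (\ref{grpounddefined}), short exact sequences in $\gr A$-grmod whose rightmost term is a semisimple module concentrated in grade $i$. The long exact sequence of $\grExt_{\gr A}^{\bullet}(-,L(\mu)(m))$, combined with the inductive hypothesis and the ungraded (SKL$'$) parity of $\Ext^\bullet_A(\rad^i\Delta(\lambda),L(\mu))$, forces every nonzero contribution on the left to lie in graded degree $m=n$. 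Dimension counting, by summing (\ref{Extgrouprelationstograded}) over $m$ and comparing to the ungraded $A$-side via the injection (\ref{injection}) (which is an isomorphism on the appropriate radical layers by the base-case version of Theorem \ref{movinglemma}), closes the induction.

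Once the graded Kazhdan–Lusztig property is established, the remaining conclusions follow quickly. Linearity of every $\gr\Delta(\lambda)$ and colinearity of every $\gr\nabla(\lambda)$, combined with the existence of $\Delta$- and $\nabla$-filtrations on projectives and injectives in $\gr A$-grmod, implies Koszulity of $\gr A$ by a standard criterion (see \cite{CPS2}, \cite{BGS}): minimal graded projective resolutions of the irreducibles $L(\lambda)$ in grade $0$ can be assembled from the linear resolutions of the $\Delta(\nu)$'s, showing each $P_i$ is generated in grade $i$. Equality of Kazhdan–Lusztig polynomials then follows from the same parity considerations: linearity of $\gr\Delta(\nu)$ collapses the sum in (\ref{Extgrouprelationstograded}) to the single term
\[
\dim\grExt^n_{\gr A}\bigl(L(\lambda),\gr\Delta(\nu)(n)\bigr),
\]
which by the dimension comparison established in the inductive step agrees with $\dim\Ext^n_A(L(\lambda),\Delta(\nu))$. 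Substitution into (\ref{polynomial}) gives the identification of Kazhdan–Lusztig polynomials, and the last sentence of the theorem records the identification of standard and costandard modules built into paragraph one.

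The principal difficulty throughout is the simultaneous bookkeeping in paragraph two of two separate pieces of data: the parities mod $2$ of Ext-vanishing (supplied by (SKL$'$)) and the exact internal grade shifts $m$ (supplied by the $\gr$-construction, which is only well-understood in Ext-degree $1$). The crucial point that makes the induction feasible is that (SKL$'$) controls $\Ext^n_A(\rad^i\Delta(\lambda),L(\mu))$ for \emph{every} $i\ge 0$, not merely for $i=0$, so that the layer-by-layer filtration of $\Delta(\lambda)$ can be exploited.
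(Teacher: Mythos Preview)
The paper does not prove this theorem at all: it is quoted verbatim from \cite[Thm.~2.2.1, Rem.~2.2.2(b)]{CPS5}, with no argument supplied here. So there is nothing in the present paper to compare your sketch against; the actual proof lives in \cite{CPS5} and uses machinery developed there.

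That said, your sketch has genuine gaps as it stands. Most seriously, you invoke Theorem~\ref{movinglemma}(b) in your base case, but that result requires a tightly graded subalgebra $\fa\subseteq A$ with $(\rad\fa)A=\rad A$, a Wedderburn complement $A_0\supseteq\fa_0$, and that $M|_\fa$ be graded with $M_0$ stable under $A_0$. None of this is available in the hypotheses of Theorem~\ref{SKLtheorem}, which assumes only that $A$ is a QHA satisfying (SKL$'$); the subalgebra $\fa$ is extra structure introduced in later sections for an entirely different purpose. Likewise, Theorem~\ref{grThm1} applies to modules of the form $P/\rad^rP$ with $P$ projective, which $\Delta(\lambda)$ is not in general.

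Your inductive step is also not closed. You propose to filter $\gr\Delta(\lambda)$ by its grade truncations and run a long exact sequence in $\grExt^\bullet_{\gr A}$, feeding in (SKL$'$) for the ungraded $\Ext^\bullet_A(\rad^i\Delta(\lambda),L(\mu))$. But to compare $\grExt^n_{\gr A}$ with $\Ext^n_A$ for $n>1$ you need more than the $\gr$-construction of \S3, which is only defined (and only shown to be injective or an isomorphism) in cohomological degree~1. The ``dimension counting'' you allude to does not go through without an independent argument that $\dim\Ext^n_{\gr A}(\gr\Delta(\lambda),L(\mu))=\dim\Ext^n_A(\Delta(\lambda),L(\mu))$ for all $n$, and that is essentially the content of the theorem. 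The proof in \cite{CPS5} handles this by a more delicate simultaneous induction, and also establishes the quasi-heredity of $\gr A$ directly rather than by passing a heredity chain through $\gr$ (which need not preserve idempotence of ideals).
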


Finally, we end this section with the following important remark.
\begin{rem}\label{LCF} Let $A$ be a QHA with weight poset $\Lambda$, and let $l:\Lambda\to
\mathbb Z$ be given.  It is important to know if the highest weight category $\Amod$ satisfies the (KL)
property with respect to $l$ stated above. In fact, considerable effort has been devoted to conditions
which are equivalent the validity of the (KL) property. Some of these are laid out in \cite[\S5]{CPS1}. 
One of these equivalent conditions is the validity of the Lusztig character formula LCF (suitably formulated).  
Rather than repeat this material (see, in particular, \cite[Thm. 5.3 \& Cor. 5.4]{CPS1}), we mention, looking
ahead to \S10 below (and using the notation there), that the LCF holds for $p$-regular weights in the
Janzten region $\Jan$ for a semisimple simple algebraic group $G$ (over an algebraically closed field
of positive characteristic $p\geq h$) provided, given $\lambda=w\cdot\lambda^-\in \Jan$, $\lambda^-\in C^-$,
the character equality
$$\ch L(\lambda)=\sum_{y\leq x\in W_p} (-1)^{l(w)-l(y)}P_{y,w}(-1)\ch \Delta(y\cdot \lambda^-)$$
is valid. See \cite[\S5]{CPS1} for more details. Similar comments hold for $U_\zeta$-mod discussed
in \S8.
\end{rem}

\section{Hopf algebras and further results on extensions} Let  $U$ be a Hopf algebra over a field $K$ having a normal
Hopf subalgebra $u$. Assume that the antipodes on $U$ and $u$ are surjective.\footnote{This assumption is automatic for  the
  Hopf algebras we consider. } For $U$-modules $M,N$,
$\Hom_K(M,N)$ has a natural $U$-module structure given by  $(u\cdot f)(m)=\sum u_{(1)}f(S(u_{(2)}m)$ for $f\in\Hom_K(M,N)$, $m\in M$, $u\in U$.
(Here $S:U\to U$ is the antipode and $\Delta(u):=\sum u_{(1)}\otimes u_{(2)}$, the image of $u$ under the comultiplication $\Delta:U\to U\otimes U$.) The surjectivity of the
antipode $S$
implies that $\Hom_U(M,N)=\Hom_K(M,N)^U$; see \cite[Prop. 2.9]{APW1}.
Let $U^\dagger:=U//u$, the quotient of $U$ by the Hopf ideal $u_+U=Uu_+$ generated by the augmentation
ideal $u_+$ of $u$. We are given finite dimensional augmented algebras $A$ and $\fa$ over $K$, with $\fa$ a normal (augmented) subalgebra
of $A$.  Assume there are compatible surjective
algebra homomorphisms
\begin{equation}\label{compatable}\renewcommand{\arraystretch}{1.5}\begin{array}[c]{cccc}
u &\twoheadrightarrow &\fa\\
\downarrow &&\downarrow\\
U &\twoheadrightarrow & A\end{array}
\end{equation}
 in which the vertical arrows are inclusions.
 We also assume that
\begin{equation}\label{radicalassumption}
\rad A=(\rad\fa)A.
\end{equation}
Thus, given an $A$-module $L$, then $L$ is completely reducible if and only if $L|_\fa$ is completely reducible.

Then
$\Hom_A(M,N)=\Hom_U(M,N)=\Hom(M,N)^U = \Hom_u(M,N)^{U^\dagger}=\Hom_{\fa}(M,N)^{U^\dagger},$
for $M,N\in A$-mod.
  When $N=L$ is a completely reducible, versions of this identity exist for $\gr A$ and $\gr\,\fa$.
Thus, if $M$ be a $\gr A$-module, then $M/\rad M$ is a $\gr A/(\rad\gr A)\cong A/\rad A$-module, and
 $$\begin{aligned}\Hom_{\gr A}(M,L) &\cong \Hom_{\gr A}(M/\rad M,L) \\
&\cong\Hom_{A}(M/\rad M,L) \\
  & \cong\Hom_{\fa}(M/\rad M,L)^{U^\dagger} \\
&\cong\Hom_{\gr \fa}(M/\rad M,L)^{U^\dagger} \\
& \cong\Hom_{\gr\fa}(M,L)^{U^\dagger}.\end{aligned}$$
The intermediate isomorphisms explain the isomorphism $\Hom_{\gr A}(M,L)\cong\Hom_{\gr\fa}(M,L)^{U^\dagger}$.
Finally, if $M$ is a graded $\gr A$-module and $L$ is a graded completely reducible module,
$$\begin{aligned} \grHom _{\gr A}(M,L) &\cong\grHom _{\gr A}(M/\rad M,L)\\ &\cong\bigoplus_{r\in\mathbb Z}\Hom_{\gr A}((M/\rad M)_r,L_r)\\
&\cong\bigoplus_{r\in\mathbb Z}\Hom_{A}((M/\rad M)_r,L_r)\\
&\cong\bigoplus_{r\in\mathbb Z}\Hom_{\fa}((M/\rad M)_r,L_r)^{U^\dagger}\\
&\cong\grHom _{\gr \fa}(M/\rad M,L)^{U^\dagger}\\
&\cong\grHom _{\gr\fa}(M,L)^{U^\dagger}.\end{aligned}
$$

\begin{prop}\label{degenerateProp1} Assume the setup of (\ref{compatable}) and (\ref{radicalassumption}).
Let $L$ be a completely reducible $A$-module. Let $M$ be an $A$-module satisfying the following
property (*): there is a projective cover $P\overset\nu\longrightarrow M$
in which $P$ is a projective $\fa$-module. Then there are isomorphisms
$\Ext^1_A(M,L)\cong\Ext^1_\fa(M,L)^{U^\dagger}$ and $\Ext^1_{\gr A}(\gr M,L)\cong\Ext^1_{\gr\fa}(\gr M,L)^{U^\dagger}$,
which are natural with respect to morphisms between $A$-modules satisfying (*).
Finally, if $L$ is given a graded structure,
$\grExt^1_{\gr A}(\gr M,L)\cong\grExt^1_{\gr\fa}(\gr M,L)^{U^\dagger},$
which are also natural (giving graded homomorphisms) with respect to  morphisms
in $\Amod$ satisfying (*). (In particular, there are natural actions of $U^\dagger$ on all
the indicated $\Ext$-groups above.)
\end{prop}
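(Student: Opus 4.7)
The plan is to realise each of the three $\Ext^1$-groups on the left-hand side as $\Hom(N,L)$ for a common module $N$, and then invoke the Hopf-algebra $\Hom$-identities displayed immediately before the proposition. Let $\nu : P \twoheadrightarrow M$ be the projective cover from property (*), and put $\Omega := \ker \nu$. Iterating the hypothesis $\rad A = (\rad \fa)A$ gives $\rad^n A = (\rad \fa)^n A$ for every $n \geq 0$ (exactly as in the proof of Proposition~\ref{grProp1}), so $\rad_A P = \rad_\fa P$ and $\rad_A \Omega = \rad_\fa \Omega$. In particular $\Omega \subseteq \rad P$, and $\Omega/\rad\Omega$ is unambiguous as an $A$- or $\fa$-module.

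Since $L$ is completely reducible, any $A$-linear or $\fa$-linear map $P \to L$ factors through $P/\rad P$ and so vanishes on $\Omega$. The long exact $\Ext$-sequences associated to $0 \to \Omega \to P \to M \to 0$ therefore collapse to
\begin{equation*}
\Ext^1_A(M,L) \;\cong\; \Hom_A(\Omega/\rad\Omega, L), \qquad \Ext^1_\fa(M,L) \;\cong\; \Hom_\fa(\Omega/\rad\Omega, L).
\end{equation*}
Applying the displayed identity $\Hom_A(N,L) = \Hom_\fa(N,L)^{U^\dagger}$ with $N = \Omega/\rad\Omega$ gives the first isomorphism of the proposition. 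For the second, I would repeat the scheme with $(\gr A, \gr \fa, \gr M)$ in place of $(A, \fa, M)$: by \S2, $\gr \nu : \gr P \twoheadrightarrow \gr M$ is a projective cover with kernel $\gr^\# \Omega$; the identity $\rad^n A = (\rad \fa)^n A$ also yields $(\gr P)|_{\gr \fa} = \gr_\fa (P|_\fa)$, which is $\gr\fa$-projective because $P|_\fa$ is $\fa$-projective. The same radical-equality argument gives $\rad_{\gr A}(\gr^\# \Omega) = \rad_{\gr \fa}(\gr^\# \Omega)$, so both $\Ext^1_{\gr A}(\gr M, L)$ and $\Ext^1_{\gr \fa}(\gr M, L)$ identify with $\Hom$ from this common quotient into $L$, and the ungraded Hopf identity for the pair $(\gr A, \gr \fa)$ (also displayed before the proposition) yields the second isomorphism. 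The third isomorphism is obtained by the identical argument with $\grHom$ in place of $\Hom$ and the graded Hopf identity in place of its ungraded cousin.

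The one real subtlety is naturality in $M$, because both $\Omega$ and the choice $\nu$ are not canonically functorial under a morphism $f : M \to M'$ in $\Amod$ satisfying (*). The cleanest way around this is to observe that the isomorphism just constructed coincides with the composite of the restriction map $\Ext^1_A(M,L) \to \Ext^1_\fa(M,L)$ followed by the inclusion into $U^\dagger$-invariants; both of these are manifestly functorial in $M$, and the $U^\dagger$-action on $\Ext^1_\fa(M,L)$ arises from the standard $U$-action on $\Hom_K$-complexes (compatible with the surjective antipodes), which is compatible with all the preceding identifications. With this reformulation the construction is independent of $\nu$ and the asserted naturality, together with the existence of natural $U^\dagger$-actions on each of the $\Ext$-groups in the statement, is automatic. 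I expect the main obstacle to lie precisely here, in verifying that the $U^\dagger$-action coming from our syzygy construction agrees with the one induced by the Hopf structure; once that compatibility is in hand, the three isomorphisms and all their naturality assertions are formal consequences of the $\Hom$-identities already established above the statement.
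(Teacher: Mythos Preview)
Your proposal is correct and follows essentially the same route as the paper: identify each $\Ext^1$-group with a $\Hom$-group from the common syzygy $\Omega$ (or $\gr^\#\Omega$), then invoke the $\Hom$-identities $\Hom_A = \Hom_\fa^{U^\dagger}$, $\Hom_{\gr A} = \Hom_{\gr\fa}^{U^\dagger}$, $\grHom_{\gr A} = \grHom_{\gr\fa}^{U^\dagger}$ established just before the statement. The paper does exactly this, only somewhat more tersely (it keeps $\Omega$ rather than passing to $\Omega/\rad\Omega$, which is of course equivalent since $L$ is completely reducible), and simply asserts the naturality at the end.

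Your treatment of naturality is in fact more careful than the paper's, and your observation that the isomorphism $\Ext^1_A(M,L)\cong\Ext^1_\fa(M,L)^{U^\dagger}$ is nothing but the ordinary restriction map (which is manifestly functorial in $M$) is a clean way to see independence of the choice of $\nu$. The ``obstacle'' you flag at the end, however, is not a genuine one: there is only one $U^\dagger$-action in play, namely the one on $\Hom_\fa(\Omega,L)=\Hom_u(\Omega,L)$ coming from the fact that $\Omega$ (being an $A$-module) is a $U$-module. Your ``syzygy construction'' and your ``Hopf structure'' action are the same action by definition, so no compatibility check is needed.
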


\begin{proof} There exist  exact sequences
$$\begin{cases}
0\to \Omega\to P\overset\nu\to M\to 0,\\
0\to\gr^\#\Omega\to\gr P\overset{\gr\nu}\longrightarrow\gr M\to 0,\end{cases}$$
 resulting in isomorphisms
$$\begin{cases}\Ext^1_A(M,L)\cong\Hom_A(\Omega,L),\\
\Ext^1_{\gr A}(\gr M,L)\cong\Hom_{\gr A}(\gr^\#\Omega,L),\\
\Ext^1_\fa(M,L)\cong
\Hom_\fa(\Omega,L),\\ \Ext^1_{\gr \fa}(\gr M,L)\cong\Hom_{\gr\fa}(\gr^\#\Omega,L).\end{cases}$$
 In fact, by (\ref{radicalassumption}), $P\twoheadrightarrow M$ (resp., $\gr P\twoheadrightarrow
\gr M$) is a projective cover of $M$ (resp., $\gr M$) in $\famod$ (resp., $\gr\famod$).
Also, if $L$ has a graded structure, then
$\grExt^1_{\gr A}(\gr M,L)\cong\grHom _{\gr A}(\gr^\#\Omega,L)$ and
$\grExt^1_{\gr \fa}(\gr M,L)\cong\grHom _{\gr\fa}(\gr^\#\Omega,L)$.
We have previously shown that
\begin{itemize}
\item[ (i)]
$\Hom_{\fa}(\Omega,L)$ has a natural $U^\dagger$-structure, and
$\Hom_{A}(\Omega,L)\cong\Hom_{\fa}(\Omega,L)^{U^\dagger};$
\item[(ii)] $\Hom_{\gr \fa}(\gr^\#\Omega,L)$ has a natural $U^\dagger$-structure,
and $\Hom_{\gr A}(\gr^\#\Omega,L)\cong\Hom_{\gr \fa}(\gr^\#\Omega,L)^{U^\dagger};$
\item[(iii)]  $\grHom _{\gr \fa}(\gr^\#\Omega,L)$ has a natural
$U^\dagger$-structure, and
$\grHom _{\gr A}(\gr^\#\Omega,L)\cong\grHom _{\gr\fa}(\gr^\#\Omega,L)^{U^\dagger}.$
\end{itemize}
These isomorphisms are natural in the category of modules satisfying (*), and can be used to
define the $U^\dagger$-action and the isomorphisms in the statement of the proposition. \end{proof}

\section{Quasi-hereditary structures on $\gr B$}

The algebra $B$ of the title of this section will be a introduced above Theorem \ref{grBisQHA}. 
It will be a QHA quotient of a QHA $A$. The algebra $A$ will be part of a pair $(A,\fa)$ satisfying the following conditions.

\begin{hyp}\label{hypothesisofsection6} 
\begin{enumerate}
\item[(1)] $A$ is a split QHA over the field $K$ with poset $\Lambda$. (See \S4)
\item[(2)] $\fa=\bigoplus_{n\geq 0}\fa_n$ is a tightly graded subalgebra of $A$ such that the pair $(A,\fa)$ satisfies the setup given in (\ref{compatable})
for a pair $(U,u)$ of Hopf algebras. In particular, $\fa$ is an augmented, normal subalgebra of $A$.
\item[(3)] $(\rad\fa)A=\rad A$ as in (\ref{radicalassumption}). In particular, $\rad A\subseteq \fa_+A$, so $A/\fa_+A$ is semisimple.
\item[(4)] $\fa_0\subseteq A_0$, a Wedderburn complement of $A$.
\end{enumerate}
\end{hyp}

Condition (4) is automatic when $\fa_0$ and $A/\rad A$ are separable algebras over $K$. It will hold
 when $\fa_0$ and $A/\rad A$ are split semisimple, or in case
$K$ has characteristic 0 or is algebraically closed. (These will be the cases in \S8.)  In fact, the separability of $A/\rad A$ guarantees that $A$
has a subalgebra (Wedderburn complement) $A'_0$ mapping isomorphically onto $A/\rad A$ under the quotient map $\pi:A\twoheadrightarrow A/\rad A$.
If $\fa_0'$ is the subalgebra of $A'_0$ mapping isomorphically to $\pi(\fa_0)$ by $\pi$, then $\fa_0'$ and $\fa_0$ are
both Wedderburn complements for $E:=\pi^{-1}(\pi(\fa_0))$. By the Wedderburn-Malcev theorem, there exists $x\in\rad A=\rad E$
such that $\fa_0=(1-x)\fa_0'(1-x)^{-1}.$ Then $$\fa_0\subseteq A_0:=(1-x)A'_0(1-x)^{-1},$$
as required.

An $(A,A_0)$-module is defined to be a pair $(M,M')$ with $M,M'$ modules for $A,A_0$, respectively, and with $M'\subseteq M|_{A_0}$.
In addition, the pair $(M,M')$ is defined to have a (non-negative) $\fa$-graded structure if $M|_\fa$ is a non-negatively $\fa$-graded module and if $M'=M_0$ is its grade 0-term.

\begin{lem}\label{infinitesimallemma1} Suppose $(M,M_0)$ is a $\fa$-graded $(A,A_0)$-module such that $M_i=0$ for $i\geq r$ for
a given integer $r>0$. Let $L$ be a completely
reducible $A$-module, viewed as a homogeneous $\fa$-module concentrated in grade 0.  Let $E$ be an $A$-extension of $M$ by $L$ which is also an $\fa$-graded extension of
$M$ by $L(r)$. Then the grade 0 term $E_0$ of the $\fa$-grading may be rechosen to be an $A_0$-submodule of $E$, with $E_1,\cdots, E_r$ remaining the same, and
$E_0$ having image $M_0$ in $M$. (Thus, $(E,E_0)$ is an $\fa$-graded $(A,A_0)$-module with $(M,M_0)$ as a natural homomorphic image.)\end{lem}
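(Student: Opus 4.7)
The plan is to modify only the grade $0$ piece of $E$, replacing $E_0$ by an $A_0$-stable complement of $L$ inside the preimage $\pi^{-1}(M_0)$, where $\pi\colon E\twoheadrightarrow M$ is the given $A$-module surjection. The starting observation is that the $\fa$-graded components of $E$ are forced by the graded short exact sequence $0\to L(r)\to E\to M\to 0$: one has $E_i\cong M_i$ for $0\le i<r$, $E_r=L$, and $E_i=0$ for $i>r$. Thus only grade $0$ needs adjusting, and the pieces $E_1,\dots,E_r$ will remain unchanged by construction.

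First I would form the preimage $\pi^{-1}(M_0)$. Because $M_0$ is $A_0$-stable (it is the grade $0$ piece of the $(A,A_0)$-module $(M,M_0)$) and $\pi$ is an $A$-module map, $\pi^{-1}(M_0)$ is an $A_0$-submodule of $E$. Using the grading one checks directly that $\pi^{-1}(M_0)=E_0\oplus L$, since $\pi$ restricts to a $K$-linear isomorphism $E_0\overset{\sim}{\to}M_0$, and since $\ker\pi=L=E_r$. The summand $L$ is itself an $A$-submodule of $E$, hence $A_0$-stable. Because $A_0$ is a Wedderburn complement, it is semisimple, so I can choose an $A_0$-stable complement $\widetilde E_0$ of $L$ inside $\pi^{-1}(M_0)$. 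By construction $\widetilde E_0$ is $A_0$-stable and $\pi$ carries it isomorphically onto $M_0$.

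Next I would verify that declaring $\widetilde E_0$ to be the new grade $0$ piece (while keeping $E_1,\dots,E_r$ as before) still yields an $\fa$-grading on $E$. The vector-space decomposition is immediate: $\widetilde E_0$ is a complement of $E_r$ inside $E_0\oplus E_r$, so $E=\widetilde E_0\oplus E_1\oplus\cdots\oplus E_r$. For the grading action, each element of $\widetilde E_0$ has the form $e_0+\ell$ with $e_0\in E_0$ and $\ell\in L=E_r$. For $i\ge 1$ one has $\fa_i\ell\subseteq E_{r+i}=0$ and $\fa_ie_0\subseteq E_i$, whence $\fa_i\widetilde E_0\subseteq E_i$. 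For $i=0$, the inclusion $\fa_0\subseteq A_0$ combined with the $A_0$-stability of $\widetilde E_0$ gives $\fa_0\widetilde E_0\subseteq\widetilde E_0$. The induced map on grade $0$ pieces is the isomorphism $\widetilde E_0\overset{\sim}{\to}M_0$ already noted, so $(E,\widetilde E_0)$ has $(M,M_0)$ as an $\fa$-graded $(A,A_0)$-module quotient.

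The only delicate point, and the reason the hypothesis $M_i=0$ for $i\ge r$ is imposed, is to ensure that the additional $L$-component picked up by moving from $E_0$ to $\widetilde E_0$ does not contribute to higher grade pieces when acted on by $\fa_{>0}$; this is encoded in the vanishing $E_{r+i}=0$ for $i\ge 1$. Everything else reduces to the semisimplicity of $A_0$, and no further homological input is needed.
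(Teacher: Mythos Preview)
Your proof is correct and follows essentially the same approach as the paper's own proof. You make explicit what the paper states more tersely: the paper simply says ``the extension $E$ of $M$ by $L$ defines an $A_0$-extension of $M_0$ by $L$'' and then splits it using semisimplicity of $A_0$, whereas you spell out that this $A_0$-extension is precisely $\pi^{-1}(M_0)=E_0\oplus L$. Your verification of the $\fa$-grading condition (writing elements of $\widetilde E_0$ as $e_0+\ell$ and using $\fa_i\ell\subseteq E_{r+i}=0$) is exactly the paper's observation ``since $\fa_iE_r=0$ for $i>0$, it follows that $\fa_iE'_0\subseteq E_i$ for $i>0$,'' just unpacked.
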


\begin{proof} Write  $E|_{\fa}=E_0\oplus E_1\oplus\cdots\oplus E_r$, the direct sum of its grades.  Since $M_0$ is $A_0$-stable, the extension $E$ of $M$ by $L$ defines an $A_0$-extension of $M_0$ by $L$. Since $A_0$ is semisimple, there
 is an $A_0$-submodule $E'_0$ of $E$  mapping isomorphically onto $M_0$. Since $\fa_iE_r=0$ for $i>0$, it follows that $\fa_iE'_0
\subseteq E_i$ for $i>0$. Therefore,
$E=E_0'\oplus E_1\oplus \cdots\oplus E_r$ defines a new grading of $E$ as an $\fa$-module. By construction, $(E,E'_0)$ is an $(A,A_0)$-module which is graded
as an $\fa$-module.
\end{proof}

Fix a proper poset ideal $\Theta$ of $\Lambda$, and let $\lambda\in\Theta$. To simplify notation in 
the remainder of this section, we define 
\begin{equation}\label{Deltaequation} \Delta= P_\Theta(\lambda),\end{equation}
the largest quotient of the projective cover $P(\lambda)$ of $L(\lambda)$ in $\Amod$ all of whose
composition factors $L(\theta)$ satisfy $\theta\in\Theta$. Thus, if $\Theta=\{\nu\in\Lambda\,|\,
\nu\leq\lambda\}$, then $\Delta=\Delta(\lambda)$, which motivates the notation.

 In applying the  lemma below to Theorem \ref{MainInfinitesimalThm}, we will assume, inductively, that
$\Delta^{0,r}|_\fa$ has a graded $\fa$-module structure with the $0$-grade term $(\Delta^{0,r})_0$ stable
under the action of $A_0$. In particular,  Theorem \ref{movinglemma}(a) allows Proposition \ref{grProp1} and Proposition \ref{grProp2}, with $M=
\Delta^{0,r}$, to be used simultaneously provided the $A$-projective cover of $\Delta^{0,r}$  (which is also the projective cover of
$\Delta$ and $L(\lambda)$) is $\fa$-projective.

\begin{lem}\label{bigdiagramlemma} Let $\Delta\in\Amod$ be as in (\ref{Deltaequation}) for the fixed
$\lambda\in\Theta$. Let $r\geq 1$. Let $L\in\Amod$ be completely reducible with summands $L(\gamma)$, $\gamma\in\Theta$.
 Assume, for each $\theta\in\Theta$, the 
projective cover $P(\theta)$ of $L(\theta)$ in $\Amod$ is  projective for $\fa$. Finally, assume
 $\Delta^{0,r}|_\fa\in\fagrmod_0$ so that $\Delta^{0,r}_0$ is $A_0$-stable. Then there is a commutative diagram

\begin{equation}\label{bigdiagram}
\begin{array}{ccccc}
\Ext^1_A(\Delta^{0,r},L) &
\bpict(30,0) \put(0,5){\vector(1,0){30}}\put(12,8){$\alpha$}\epict
& \Ext^1_\fa(\Delta^{0,r},L) &
\bpict(30,0) \put(30,5){\vector(-1,0){30}}\put(13,8){$\beta$}\epict
& \grExt^1_\fa(\Delta^{0,r},L(r)) \\[6mm]
& \bpict(30,0) \put(23,-5){\vector(-1,1){17}}
 \put(17,6){$f$}\epict &
\bpict(30,0) \put(20,-5){\vector(0,1){20}}\put(23,3){$\rho$}\epict
&&
\bpict(30,0) \put(20,-5){\vector(0,1){20}}\put(23,3){$\sigma$}\epict
\\[6mm]
\Ext^1_{\rm{gr}A}(\rm{gr}\Delta^{0,r},$L$)  &
\bpict(30,0) \put(30,5){\vector(-1,0){30}}\put(13,8){$\delta$}\put(11,-2){$\sim$}\epict
& \Ext^1_{\rm{gr}\fa}(\rm{gr}\Delta^{0,r},$L$)^{U^\dagger} &
\bpict(30,0) \put(30,5){\vector(-1,0){30}}\put(13,8){$\epsilon$}\epict
& \rm{}\grExt^1_{\rm{gr}\fa}(\rm{gr}\Delta^{0,r},$L(r)$)^{U^\dagger} \\[6mm]
\bpict(30,0) \put(20,15){\vector(0,-1){20}}\put(23,3){$\xi$}\epict
&&
\bpict(30,0) \put(20,15){\vector(0,-1){20}}\put(23,3){$\tau$}\epict
& (I) &
\bpict(30,0) \put(20,15){\vector(0,-1){20}}\put(23,3){$\phi$}\epict
 \\[6mm]
\Ext^1_{\rm{gr} A}(\Delta^{r-1,r},$L$)&
\bpict(30,0) \put(30,5){\vector(-1,0){30}}\put(13,8){$\kappa$}\put(11,-2){$\sim$}\epict
&\Ext^1_{\rm{gr}\fa}(\Delta^{r-1,r},$L$)^{U^\dagger}&
\bpict(30,0) \put(30,5){\vector(-1,0){30}}\put(12,9){$\zeta$}\put(11,-2){$\sim$}\epict
&\rm{}\grExt^1_{\rm{gr}\fa}(\Delta^{r-1,r},$L(r)$)^{U^\dagger}
\\[6mm]
&&
\bpict(30,0) \put(20,15){\vector(0,-1){20}}\put(23,3){$\pi$}\epict
 & (II) &
\bpict(30,0) \put(20,15){\vector(0,-1){20}}\put(23,3){$\theta$}\epict
\\[6mm]
&& \Ext^2_{\rm{gr}\fa}(\rm{gr}\Delta^{0,r-1},$L$) &\bpict(30,0) \put(30,5){\vector(-1,0){30}}\put(12,9){$\iota$}
\epict
& \rm{}\grExt^2_{\rm{gr}\fa}(\rm{gr}\Delta^{0,r-1},$L(r)$)
\end{array}
\end{equation}
in which $\xi,\iota$ are injective, the maps $\zeta,\kappa,\delta, f$ are isomorphisms, and $\ker\pi=\Image{\tau}$ (resp.,
$\ker\theta=\Image{\phi}$). \end{lem}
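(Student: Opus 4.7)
The proof is a long diagram chase combining three ingredients: (i) the Hopf-algebra identity of Proposition~\ref{degenerateProp1}, which supplies horizontal isomorphisms between the ``$A$-column'' and the $U^\dagger$-invariants of the ``$\fa$-column''; (ii) the tight grading of $\fa$ together with the decomposition (\ref{Extgrouprelationstograded}), which relates ungraded Ext in the $\gr\fa$-rows to a single graded component under a degree shift; and (iii) the long exact sequences of $\Ext^\bullet_{\gr\fa}(-,L)$ and $\grExt^\bullet_{\gr\fa}(-,L(r))$ attached to the short exact sequence
\[
0\to \Delta^{r-1,r}\to \gr\Delta^{0,r}\to \gr\Delta^{0,r-1}\to 0
\]
in $\gr\fa$-grmod, which will furnish the $\ker=\Image$ equalities for the pairs $(\pi,\tau)$ and $(\theta,\phi)$.

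The horizontal isomorphisms will be obtained as follows. For $\delta$, apply Proposition~\ref{degenerateProp1} with $M=\gr\Delta^{0,r}$: by Theorem~\ref{movinglemma}(a), the projective cover $P(\lambda)\twoheadrightarrow \Delta$ may be chosen so that $P(\lambda)|_\fa\in\fagrmod_0$ is a graded $\fa$-projective cover of $\Delta^{0,r}|_\fa$; then $\gr P(\lambda)\twoheadrightarrow \gr\Delta^{0,r}$ is a $\gr A$-projective cover whose restriction to $\gr\fa$ is projective (via $\iota_\fa$ and Proposition~\ref{basictightprop}), verifying condition (*) of Proposition~\ref{degenerateProp1}. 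The map $\kappa$ is handled by the same argument applied to the pure-grade module $M=\Delta^{r-1,r}$, which is covered by an appropriate truncation of $\gr P(\lambda)$ whose $\gr\fa$-restriction inherits projectivity. For $\zeta$, both $\Delta^{r-1,r}$ and $L$ are completely reducible as $\gr\fa$-modules, so $\Ext^1_{\gr\fa}(\Delta^{r-1,r},L)$ is controlled by the linear piece $\rad\gr\fa/\rad^2\gr\fa=\gr\fa_1=\fa_1$, which is concentrated in degree $1$; the grade shift aligning $\Delta^{r-1,r}$ (in grade $r-1$) with $L(r)$ (in grade $r$) singles out exactly this degree-$1$ contribution, so that the ungraded Ext coincides with the single graded summand $\grExt^1_{\gr\fa}(\Delta^{r-1,r},L(r))$. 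Finally, $f$ is the composite of the $\gr$-isomorphism provided by Theorem~\ref{movinglemma}(b) with $\delta^{-1}$.

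For the remaining claims: injectivity of $\xi$ follows from Proposition~\ref{grProp3}(c) with $\Gamma=\Theta$ and $P=P(\lambda)$, whose dimension-equality hypothesis $\dim\Ext^1_A(\Delta^{0,r},L)=\dim\Ext^1_{\gr A}(\gr\Delta^{0,r},L)$ is precisely what $f$ supplies. Injectivity of $\iota$ is immediate from (\ref{Extgrouprelationstograded}): $\iota$ is the inclusion of the $s=r$ summand into $\Ext^2_{\gr\fa}(\gr\Delta^{0,r-1},L)=\bigoplus_s\grExt^2_{\gr\fa}(\gr\Delta^{0,r-1},L(s))$. The two $\ker=\Image$ equalities come directly from the long exact sequences mentioned in (iii): taking $U^\dagger$-invariants preserves exactness at the middle term, both because invariants are left exact and because $\Image\tau$ already lies in the $U^\dagger$-invariants by naturality. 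The chief technical obstacle lies in $\kappa$---verifying that $\Delta^{r-1,r}$ admits a graded $\gr A$-projective cover whose $\gr\fa$-restriction is projective---so that the Hopf identity and its $U^\dagger$-invariant version apply uniformly across all three rows of the diagram, aligning the vertical restriction/boundary maps in a commutative way.
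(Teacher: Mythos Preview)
Your plan is essentially the paper's own argument: the same auxiliary diagram built from the $\gr$-construction (Propositions~\ref{grProp1}, \ref{grProp2}, Theorem~\ref{movinglemma}), the Hopf identities of Proposition~\ref{degenerateProp1} for $\delta$ and $\kappa$, the long exact sequences for boxes (I) and (II), the decomposition (\ref{Extgrouprelationstograded}) for $\iota$ and $\zeta$, and Proposition~\ref{grProp3}(c) for the injectivity of $\xi$ (the paper cites (b), but (c) is indeed the relevant part once the dimension equality from Theorem~\ref{movinglemma}(b) is in hand). Your argument for $\zeta$ via the degree-$1$ structure of $\gr\fa$ is in fact slightly cleaner than the paper's, which routes through $\gr A$ and reuses $\kappa$.

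Two remarks. First, your justification for $\ker\pi=\Image\tau$ and $\ker\theta=\Image\phi$ is incomplete: left exactness of $(-)^{U^\dagger}$ only gives $\Image\tau\subseteq\ker\pi$, and your second clause (``$\Image\tau$ already lies in the invariants by naturality'') does not supply the reverse inclusion. One needs either that the relevant $U^\dagger$-modules are semisimple (true in the characteristic-zero applications of \S8, where $U^\dagger\cong U(\mathfrak g)$), or an additional argument. The paper's own proof does not address this point either, so you are in good company; but it is a genuine gap in the write-up as it stands. Second, your instinct that $\kappa$ is the delicate point is correct: Proposition~\ref{degenerateProp1} requires the $A$-projective cover of $\Delta^{r-1,r}$ to be $\fa$-projective, and the lemma's hypothesis only guarantees this for $P(\lambda)$, not for the $P(\gamma)$ covering the other constituents of $\Delta^{r-1,r}$. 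The paper simply invokes Proposition~\ref{degenerateProp1} here without comment; in the applications (Theorems~\ref{grBisQHA} and \ref{Koszultheorem}) the stronger hypothesis that \emph{all} $P(\gamma)$, $\gamma\in\Gamma$, are $\fa$-projective is assumed, which closes the gap.
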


\begin{proof} We begin by establishing the existence of the map $f$ and by defining
the maps $\delta,\epsilon,\rho,\sigma$.
In order to obtain the top two rows of the diagram, consider a smaller diagram
$$\begin{CD} \Ext^1_A(\Delta^{0,r},L) @>{\mu}>> \Ext^1_{\gr A}(\gr\Delta^{0,r},L)\\
@VV{\alpha}V @VV{\delta'}V\\
\Ext^1_\fa(\Delta^{0,r},L) @>{\nu}>> \Ext^1_{\gr\fa}(\gr\Delta^{0,r},L) \\
@AA{\beta}A @AA{\epsilon'}A\\
\grExt^1_\fa(\Delta^{0,r},L(r)) @>{\omega}>> \grExt^1_{\gr \fa}(\gr\Delta^{0,r},L(r))
\end{CD}$$
The three horizontal maps are all induced by the $\gr$-construction, and the vertical maps are the ``obvious" ones. Commutativity is a
consequence of Propositions \ref{grProp1} and Proposition \ref{grProp2}.
A  projective cover of $\Delta^{0,r}$ is also a projective cover of $L(\lambda)$, hence is $\fa$-projective by hypothesis. The maps $\mu,\nu,\omega$ are isomorphisms, by Proposition
 \ref{grProp2} and Theorem
\ref{movinglemma}(b) (applied to $M=\Delta^{0,r}$). Now $\rho$ (resp., $\sigma$, $\epsilon$) is obtained by restricting $\nu^{-1}$ (resp., $\omega^{-1}$, $\epsilon'$) to the
submodule of $U^\dagger$-fixed points. We have defined the upper right-hand square and the top row of (\ref{bigdiagram}).
We now consider the construction of the diagonal map $f$.
 Proposition \ref{degenerateProp1} and its argument show that the images of $\alpha$ and $\delta'$ are $\Ext^1_\fa(\Delta^{0,r},L)^{U^\dagger}$ and $\Ext^1_{\gr\fa}(\gr\Delta^{0,r},L)^{U^\dagger}$, respectively. Both $\alpha$ and $\delta'$ are injective, giving isomorphisms to their
images. Let
$$\delta:\Ext^1_{\gr\fa}(\gr\Delta^{0,r},L)^{U^\dagger}\to\Ext^1_{\gr A}(\gr\Delta^{0,r},L)$$
be the map inverse to $\delta'$ on its image. Define
$$f:\Ext^1_{\gr\fa}(\gr\Delta^{0,r},L)^{U^\dagger}\to\Ext^1_A(\Delta^{0,r},L)$$ to be the composition of $\delta$ with $\mu^{-1}$.

Our construction shows that $\rho=\alpha\circ f$ and $\rho\circ\epsilon=\beta\circ\sigma$, and that $f$ is an isomorphism.
 This establishes
all the claimed interrelationships of the maps in the top two rows of the four row diagram, and the claimed properties of these maps.
The isomorphism $\kappa$ is obtained using Proposition \ref{degenerateProp1}, and  the commutativity of the square to which it belongs (along with the
definitions of $\xi$ and $\tau$) are obvious.

Boxes (I) and (II) {\it without} $(-)^{U^\dagger}$ are commutative as evident parts of long exact sequences of $\Ext$ and $\grExt$, with the maps
$\epsilon,\zeta,\iota$ from right to left standard injections provided by (\ref{Extgrouprelationstograded}).  Also, $\epsilon$ and $\zeta$ have
factorizations fitting into a commutative diagram:
$$\begin{CD} \Ext^1_{\gr\fa}(\gr\Delta^{0,r},L)^{U^\dagger} @<{\supseteq}<< \grExt^1_{\gr A}(\gr\Delta^{0,r},L(r)) @<{\sim}<<
\grExt^1_{\gr\fa}(\gr\Delta^{0,r},L(r))^{U^\dagger} \\
@VVV  @VVV @VVV \\
\Ext^1_{\gr\fa}(\Delta^{r-1,r},L)^{U^\dagger} @<{\supseteq}<< \grExt^1_{\gr A}(\Delta^{r-1,r},L(r)) @<{\sim}<<\grExt^1_{\gr\fa}(\Delta^{r-1,r},L(r))^{U^\dagger}
\end{CD}
$$
in which the right-hand box is commutative by the naturality in Proposition \ref{degenerateProp1}. The commutativity of the left-hand box follows
from naturality in Proposition \ref{degenerateProp1} together with the obvious naturality of (\ref{Extgrouprelationstograded}). The fact that $\epsilon$ and
$\zeta$ are the composites of the top row and bottom row, respectively, is easily seen from the construction of these maps in terms of syzygies.  Note that box
(II) without the $(-)^{U^\dagger}$ gives maps whose restrictions to $\grExt^1_{\gr\fa}(\gr\Delta^{0,r},L(r))^{U^\dagger}$ and $ \grExt^1_{\gr\fa}(\Delta^{r-1,r},L(r))^{U^\dagger}$ are candidates for $\epsilon$ and $\zeta$, and it is really these restrictions that we have shown factor as above
with images in $\Ext^1_{\gr\fa}(\gr\Delta^{0,r},L)^{U^\dagger}$ and $\Ext^1_{\gr\fa}(\gr\Delta^{0,r},L)^{U^\dagger}$, respectively. Thus our argument establishes
the existence of box (II), including the $(-)^{U^\dagger}$.

The $A$-modules $\Delta$ and $\Delta^{0,r}$ have the projective cover $P=P(\lambda)$. It is
assumed that $P|_\fa$ is projective for $\fa$. Thus, the hypothesis of Theorem \ref{movinglemma}(b)
are satisfied with $M=\Delta^{0,r}$. Hence,  any choice of the $\gr$-construction gives an isomorphism
$$\Ext^1_A(\Delta^{0,r},L)\overset\sim\to\Ext^1_{\gr A}(\gr\Delta^{0,r},L),$$
 and so an gives equality of dimensions of these two spaces. Then  Proposition \ref{grProp3}(c) implies
that $\xi$ is injective as required.

The map $\zeta$ is an isomorphism, since the containment
$$\Ext^1_{\gr\fa}(\Delta^{r-1,r},L)^{U^\dagger}\supseteq\grExt^1_{\gr A}(\Delta^{r-1,r},L(r))$$ factors as
$$\Ext^1_{\gr\fa}(\Delta^{r-1,r},L)^{U^\dagger}\cong\Ext^1_{\gr A}(\Delta^{r-1,r},L)\supseteq\grExt^1_{\gr A}(\Delta^{r-1,r},L(r)).$$
Proposition \ref{degenerateProp1} has been used again for the isomorphism. The containment is an equality, since, for the tightly graded algebra $\gr A$, graded extensions between irreducible modules are split unless the grades differ by 1.
\end{proof}

\begin{thm}\label{MainInfinitesimalThm} Assume that Hypothesis \ref{hypothesisofsection6}  holds, and $\Delta=P_\Gamma(\lambda)$, where $\Gamma$ is a poset ideal in $\Lambda$ and $\lambda\in\Gamma$.
Assume that, for each $\gamma\in\Gamma$, the projective cover $P(\gamma)$ in $\Amod$ is projective as an $\fa$-module. Then $\Delta$ has an $\fa$-graded structure generated in grade 0 such that $\Delta_0$ is $A_0$-stable. In particular, these conclusions
hold if $\Delta=\Delta(\lambda)$, provided $P(\lambda)|_\fa$ is projective. \end{thm}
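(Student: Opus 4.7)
The plan is to proceed by induction on $r \geq 1$, showing at each step that the truncation $\Delta^{0,r} = \Delta/\rad^r \Delta$ admits a graded $\fa$-module structure that is generated in grade $0$ and whose grade-$0$ component is $A_0$-stable. Since $\Delta$ has finite radical length, taking $r$ sufficiently large will yield the theorem in full (and its case $\Delta = \Delta(\lambda)$ when $\Gamma = \{\nu \leq \lambda\}$). The base $r = 1$ is trivial: $\Delta^{0,1} = L(\lambda)$ is an $A_0$-module which we place in grade $0$.

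For the inductive step, consider the canonical short exact sequence
$$ 0 \longrightarrow L \longrightarrow \Delta^{0,r+1} \longrightarrow \Delta^{0,r} \longrightarrow 0 $$
in $\Amod$, with $L = \rad^r \Delta/\rad^{r+1} \Delta$ completely reducible and having all composition factors $L(\gamma)$ with $\gamma \leq \lambda$. Let $e \in \Ext^1_A(\Delta^{0,r}, L)$ denote its class. The inductive hypothesis, together with the assumption that $P(\lambda)|_\fa$ is projective, guarantees that the hypotheses of Lemma \ref{bigdiagramlemma} and Theorem \ref{movinglemma}(b) are satisfied. The goal is to show that the restricted $\fa$-extension $\alpha(e) \in \Ext^1_\fa(\Delta^{0,r}, L)$ lies in $\Image \beta$, i.e., admits a graded lift placing $L$ in grade $r$.

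Invoking Lemma \ref{bigdiagramlemma}, write $g = f^{-1}(e) \in \Ext^1_{\gr \fa}(\gr \Delta^{0,r}, L)^{U^\dagger}$. The identities $\alpha(e) = \rho(g)$ and $\rho \circ \epsilon = \beta \circ \sigma$ reduce the task to showing $g \in \Image \epsilon$; via the isomorphism $\delta$ this is equivalent to the assertion that $\mu(e) = \delta(g) \in \Ext^1_{\gr A}(\gr \Delta^{0,r}, L)$ is concentrated in the graded-shift-$r$ summand $\grExt^1_{\gr A}(\gr \Delta^{0,r}, L(r))$. This concentration is the heart of the argument and the principal obstacle. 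The key identification is that $\mu(e)$, the image of $e$ under the $\gr$-construction isomorphism of Theorem \ref{movinglemma}(b), is represented by the graded $\gr A$-extension
$$ 0 \longrightarrow L(r) \longrightarrow \gr \Delta^{0,r+1} \longrightarrow \gr \Delta^{0,r} \longrightarrow 0 $$
extracted from the radical filtration of $\Delta^{0,r+1}$. To establish this in the spirit of Remark \ref{grRem2}(c), fix the projective cover $\nu : P(\lambda) \twoheadrightarrow \Delta^{0,r}$ with kernel $\Omega$; the homomorphism $\Omega \to L$ representing $e$ factors through the natural quotient onto $\rad^r\Delta/\rad^{r+1}\Delta$, and a diagram chase using $\rad^r P(\lambda) \subseteq \Omega$ shows that the induced map on $\gr^\#\Omega$ is homogeneous of degree $r$, placing the resulting extension class in the desired graded summand.

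With $\alpha(e) \in \Image \beta$ secured, transfer the resulting graded $\fa$-extension structure along an $\fa$-module isomorphism to give $\Delta^{0,r+1}|_\fa$ a grading in degrees $0, \ldots, r$ which, modulo $L$, recovers the inductive grading on $\Delta^{0,r}$. Generation in grade $0$ is automatic by Nakayama's lemma: the grade-$0$ part surjects onto $(\Delta^{0,r})_0$, which by induction surjects onto the $A$-head $L(\lambda)$ of $\Delta^{0,r+1}$, and this $A$-head coincides with the $\fa$-head because $(\rad\fa)A = \rad A$. Finally, Lemma \ref{infinitesimallemma1} allows a re-choice of the grade-$0$ summand so that it becomes $A_0$-stable without disturbing the higher grades. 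This completes the inductive step and proves the theorem.
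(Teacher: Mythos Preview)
Your inductive framework matches the paper's exactly, and you correctly identify that the crux is showing $\alpha(e)\in\Image\beta$, equivalently that $g=f^{-1}(e)$ lies in $\Image\epsilon$. The gap is in your ``key identification'': you assert that the $\gr$-construction sends the class $e$ to the class of the graded $\gr A$-extension $0\to L(r)\to\gr\Delta^{0,r+1}\to\gr\Delta^{0,r}\to 0$, and justify this only by analogy with Remark~\ref{grRem2}(c). But that remark treats the situation where the kernel is $\rad^rP$ itself, so that $\gr^\#\Omega\cong(\gr\Omega)(r)$ is generated in grade $r$. Here $\Omega=\ker(P(\lambda)\to\Delta^{0,r})=J+\rad^rP(\lambda)$ with $J=\ker(P(\lambda)\to P_\Gamma(\lambda))$, so $\gr^\#\Omega$ has components in grades $<r$ coming from $J$. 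The $\gr$-construction passes through the \emph{non-canonical} isomorphism $\omega:\gr^\#(\Omega/\rad\Omega)\overset{\sim}{\to}\Omega/\rad\Omega$, and there is no a priori reason the grade-$s$ pieces for $s<r$ are sent to zero in $L$. Your sentence ``a diagram chase using $\rad^rP(\lambda)\subseteq\Omega$'' does not supply one. (The relevant missing ingredient is essentially the Claim in the proof of Proposition~\ref{grProp3}(c), that no composition factor of $\Omega/(\rad\Omega+\rad^rP)$ lies in $\Gamma$; you neither cite nor prove this.)

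The paper's proof avoids this direct identification entirely. It instead pushes $\chi'=g$ down via $\tau$ into the third row of the big diagram~(\ref{bigdiagram}), uses the isomorphism $\zeta$ to write $\tau(\chi')=\zeta(\chi'')$, and then exploits the exactness relations $\ker\pi=\Image\tau$ and $\ker\theta=\Image\phi$ together with the injectivity of $\iota$ (the fourth row) to produce $\chi'''\in\grExt^1_{\gr\fa}(\gr\Delta^{0,r},L(r))^{U^\dagger}$ with $\epsilon(\chi''')=\chi'$. The injectivity of $\xi$ (itself a consequence of Proposition~\ref{grProp3}(c)) is what forces this last equality. In other words, boxes (I), (II) and the maps $\tau,\zeta,\phi,\theta,\iota,\pi,\xi$ are not decoration: they constitute the argument that your proposal skips.
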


\begin{proof} We prove by induction on $r$ that $\Delta^{0,r+1}$ has an $\fa$-graded structure generated in grade 0 such that $\Delta^{0,r+1}_0$ is $A_0$-stable,
assuming that the similar result holds for $\Delta^{0,r}$. Let $L=\Delta^{r,r+1}$ in Lemmas \ref{infinitesimallemma1},  \ref{bigdiagramlemma}.
Let $\chi\in\Ext^1_A(\Delta^{0,r},L)$ be the extension corresponding to
$0\to\Delta^{r,r+1}\to \Delta^{0,r+1}\to\Delta^{0,r}\to 0.$
If  $\alpha(\chi)\in\Image{(\beta)}$, we can put an $\fa$-graded structure on $\Delta^{0,r+1}$, compatible with that
on $\Delta^{0,r}$, with $\Delta^{r,r+1}$ pure of grade $r$. Then Lemma \ref{infinitesimallemma1} applied with $M=\Delta^{0,r}$  completes the induction.
However, since $f$ is an isomorphism in (\ref{bigdiagram}) there is an element $\chi'\in \Ext^1_{\gr\fa}(\gr\Delta^{0,r},L)^{U^\dagger}$ with $f(\chi')=\chi$,
and so $\rho(\chi')=\alpha(\chi)$. Write $\tau(\chi')=\zeta(\chi^{\prime\prime})$, with $\chi^{\prime\prime}\in\grExt^1_{\gr\fa}(\Delta^{r-1,r},L(r))^{U^\dagger}$.
Then $\iota\theta(\chi^{\prime\prime})=\pi\tau(\chi')=0$. Thus, $\theta(\chi^{\prime\prime})=0$ since $\iota$ is injective. Hence, $\chi^{\prime\prime}=
\phi(\chi^{\prime\prime\prime})$ with $\chi^{\prime\prime\prime}\in\grExt^1_{\gr\fa}(\gr\Delta^{0,r},L(r))^{U^\dagger}$. Both $\epsilon(\chi^{\prime\prime\prime})$
and $\chi'$ have the same image under $\tau$ and hence are equal, using the injectivity of $\xi$ and the isomorphisms $\delta,\kappa$. Now we have
$\alpha(\chi)=\rho(\chi')=\rho(\epsilon(\chi^{\prime\prime\prime}))=\beta(\sigma(\chi^{\prime\prime\prime}))$, as desired.\end{proof}

We prove the main result of this section, establishing that, given a poset
ideal $\Gamma$ of $\Lambda$, if the hypotheses of Theorem \ref{MainInfinitesimalThm} holds for
$\Gamma$, then $\gr B$ is a QHA for  $B:=A_\Gamma$.

 \begin{thm}\label{grBisQHA} Assume that $A$ satisfies Hypothesis \ref{hypothesisofsection6}. 
 Let $\Gamma$ be a 
(non-empty) poset ideal in $\Lambda$ such that, given $\gamma\in\Gamma$,
the projective cover $P(\gamma)$ of $L(\gamma)$ in $\Amod$ is projective as an $\fa$-module. 
Let $B=A_\Gamma:=A/J$ be so that $\Bmod\cong\Amod[\Gamma]$ (for some ideal $J$ of $A$). Then 
$\gr B$ is a QHA with weight poset $\Gamma$. Also, the $\gr B$-modules $\gr \Delta(\gamma)$, $\gamma\in\Gamma$, are the standard
modules in the HWC $\gr B$-mod.\end{thm}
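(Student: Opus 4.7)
The plan is to use Theorem \ref{MainInfinitesimalThm} to simultaneously equip the standard modules $\Delta(\gamma)$ and the projective covers $P_\Gamma(\gamma)$ (for each $\gamma\in\Gamma$) with compatible $\fa$-graded structures whose grading filtrations coincide with the radical filtrations, and then to transport the $\Delta$-filtrations of projectives in $\Bmod$ through $\gr$ to produce the required QHA structure on $\gr B$.

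First, I would apply Theorem \ref{MainInfinitesimalThm} twice to each $\gamma\in\Gamma$: once with $\Theta=\{\mu\in\Lambda\,:\,\mu\leq\gamma\}$, yielding an $\fa$-graded structure on $P_\Theta(\gamma)=\Delta(\gamma)$ generated in grade $0$; and once with $\Theta=\Gamma$, yielding such a structure on $P_\Gamma(\gamma)$. The hypothesis that $P(\gamma)|_\fa$ is projective is precisely what Theorem \ref{MainInfinitesimalThm} requires. Since $A_0$ is a Wedderburn complement of $A$ and both modules are generated in grade $0$ with $A_0$-stable grade-$0$ parts, a short argument using that $(\Delta(\gamma)_{\geq 1})\cap \Delta(\gamma)_0=0$ gives $\Delta(\gamma)_0\cong L(\gamma)\cong P_\Gamma(\gamma)_0$ as $A_0$-modules.

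Next, I would match the $\fa$-gradings with the radical filtrations. From Hypothesis \ref{hypothesisofsection6}(3), $\rad A=(\rad\fa)A$, and induction gives $\rad^n A=(\rad\fa)^n A$ for all $n\geq 0$. Tight grading of $\fa$ yields $(\rad\fa)^n=\bigoplus_{k\geq n}\fa_k$, so for any $A$-module $M$ whose restriction to $\fa$ lies in $\fagrmod_0$, $\rad^n_A M=(\rad\fa)^n M=M_{\geq n}$. Applied to $\Delta(\gamma)$ and $P_\Gamma(\gamma)$ (viewed as $B$-modules, noting that radical filtrations in $\Bmod$ and $\Amod$ agree on $\Amod[\Gamma]$), this shows the $\fa$-grading coincides with the radical filtration. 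Hence $\gr\Delta(\gamma)$ and $\gr P_\Gamma(\gamma)$ agree as graded $K$-spaces with $\Delta(\gamma)$ and $P_\Gamma(\gamma)$ carrying their $\fa$-gradings; the canonical surjection $\pi:P_\Gamma(\gamma)\twoheadrightarrow\Delta(\gamma)$ is automatically graded of degree $0$ (sending $L(\gamma)=P_\Gamma(\gamma)_0$ identically to $L(\gamma)=\Delta(\gamma)_0$, and both modules are generated over $\fa$ in grade $0$); applying $\gr$ produces a graded surjection $\gr P_\Gamma(\gamma)\twoheadrightarrow\gr\Delta(\gamma)$ of $\gr B$-modules, with $\gr P_\Gamma(\gamma)$ the projective cover of $L(\gamma)$ in $\gr B$-mod by the standard results of \S2.

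To extract the full QHA structure on $\gr B$, I would induct on $|\Gamma|$. The base case $|\Gamma|=1$ is immediate. For the inductive step, pick a maximal element $\gamma_0\in\Gamma$, set $\Gamma'=\Gamma\setminus\{\gamma_0\}$, and let $B'=A_{\Gamma'}$; since the hypothesis of the theorem is inherited by $\Gamma'\subseteq \Gamma$, the inductive hypothesis makes $\gr B'$ a QHA with poset $\Gamma'$ and standards $\gr\Delta(\gamma)$ ($\gamma\in\Gamma'$). The heredity ideal $J_0$ of $B'$ in $B$ is a direct sum of copies of $P_B(\gamma_0)=\Delta(\gamma_0)$ as left $B$-module (using maximality of $\gamma_0$), each graded by the first step, with radical filtration matching the $\fa$-grading. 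Applying $\gr$ converts the heredity chain $0\subseteq J_0\subseteq B$ into a heredity chain in $\gr B$ with $\gr(J_0)$ a direct sum of graded shifts of $\gr\Delta(\gamma_0)$, which combines with the inductive QHA structure on $\gr B'=\gr(B/J_0)$ to give $\gr B$ a QHA structure with poset $\Gamma$ and standards $\gr\Delta(\gamma)$. The main obstacle is this final step: one must carefully verify that the graded $\fa$-bimodule structure of the two-sided ideal $J_0$ is compatible with the radical filtration, so that $\gr(J_0)$ is genuinely a heredity ideal of $\gr B$, and that the resulting sections in $\gr B$ correctly identify as graded standard modules without shift for the top of each projective $\gr P_\Gamma(\gamma)$.
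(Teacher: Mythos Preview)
Your inductive strategy (remove a maximal weight $\gamma_0$, apply induction to $\Gamma'=\Gamma\setminus\{\gamma_0\}$) matches the paper's, and invoking Theorem \ref{MainInfinitesimalThm} to equip the relevant modules with $\fa$-gradings is in the right spirit. However, the ``main obstacle'' you flag in your last paragraph is precisely the heart of the argument, and the tools you have assembled do not overcome it.

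The kernel of $\gr B\twoheadrightarrow\gr B'$ is $\gr^\# J_0$ in the sense of (\ref{grpounddefined}), not $\gr J_0$; these are built from different filtrations ($J_0\cap\rad^s B$ versus $(\rad^s B)J_0$) and a priori differ. The $\fa$-gradings you put on the individual $\Delta(\gamma_0)$ summands of $J_0$ do not control how $J_0$ sits inside the radical filtration of $B$, because those gradings were constructed independently on each $P_\Gamma(\gamma)$ and there is no reason the inclusions $K_\gamma:=\ker\bigl(P_\Gamma(\gamma)\to P_{\Gamma'}(\gamma)\bigr)\hookrightarrow P_\Gamma(\gamma)$ are $\fa$-graded. (Your claim that $P_\Gamma(\gamma)\twoheadrightarrow\Delta(\gamma)$ is ``automatically graded of degree $0$'' is only true at the level of filtered maps---both gradings coincide with the radical filtration, so the map is filtered and $\gr\pi$ is graded, but that is a tautology and gives no leverage on $\gr^\#$.)

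The paper fills this gap with a different ingredient that your outline omits: Theorem \ref{movinglemma}(b). One applies Theorem \ref{MainInfinitesimalThm} not to $P_\Gamma(\gamma)$ but to $P_\Theta(\gamma)$ for $\gamma$ in the smaller ideal $\Theta=\Gamma'$, thereby placing these modules in $\fa$-grmod$_0$ with $A_0$-stable grade-$0$ term. Theorem \ref{movinglemma}(b) then transfers the vanishing $\Ext^1_A(P_\Theta(\gamma),L(\mu))=0$ (valid for $\mu\in\Theta$ since $P_\Theta(\gamma)$ is projective in $A_\Theta$-mod) to $\Ext^1_{\gr A}(\gr P_\Theta(\gamma),L(\mu))=0$. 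This forces $(\gr P(\gamma))_\Theta\cong\gr P_\Theta(\gamma)$ and hence $\gr(A_\Theta)\cong(\gr A)_\Theta$; consequently $\gr B'$ is the largest quotient of $\gr B$ with composition factors in $\Theta$. It follows that the head of $\gr^\# J_0$ contains only $L(\gamma_0)$, and a dimension count (together with $\gr\Delta(\gamma_0)$ being the projective cover of $L(\gamma_0)$ in $\gr B$-mod) yields $\gr^\# J_0\cong\gr\Delta(\gamma_0)^{\oplus n}$, a heredity ideal. The missing mechanism in your proposal is exactly this $\Ext^1$-transfer from $A$ to $\gr A$; without it there is no way to show that $\gr^\# J_0$ has the correct head.
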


\begin{proof} Induction on $|\Gamma|$: If $|\Gamma|=1$, then $B\cong K$, and the
result is trivial. Thus, assume that the result holds for any proper ideal $\Theta$ of $\Gamma$.
In particular,  let $\tau\in\Gamma$ be maximal, and form the poset
ideal $\Theta:=\Gamma\backslash\{\tau\}$ of $\Lambda$.
By induction, $C:=B_\Theta$ is QHA with weight poset $\Theta$ and with standard modules $\gr\Delta(\theta)$, $\theta\in\Theta$. 

For $\gamma\in\Theta$, the projective cover $P(\gamma)$ of $L(\gamma)$ in $\Amod$ is, by 
hypothesis, a projective $\fa$-module. Thus, by Theorem \ref{MainInfinitesimalThm}, $P_\Theta(\gamma)
\in \fa$-grmod$_0$, and $P_\Theta(\gamma)_0$ is $A_0$-stable.  Therefore, the hypotheses of Theorem \ref{movinglemma} hold with $M=P_\Theta(\gamma)$ and $L=L(\mu)$ for any $\mu\in \Lambda$, so that
Theorem \ref{movinglemma}(b) gives an isomorphism $\Ext^1_A(P_\Theta(\gamma),L(\mu))\cong
\Ext^1_{\gr A}(\gr P_\Theta(\gamma),L(\mu))$. But, if $\mu\in\Theta$, then (\ref{derivedembedding}) implies that
$\Ext^1_A(P_\Theta(\gamma),L(\mu))\cong \Ext^1_C(P_\Theta(\gamma),L(\nu))=0$, because
$P_\Theta(\gamma)$ is projective in $C$--mod.  Therefore, $\Ext^1_{\gr A}(\gr P_\Theta(\gamma),L(\mu))=0$,
for all $\mu\in\Theta$. However,  there is an evident surjection
$\gr P(\gamma)\twoheadrightarrow \gr P_\Theta(\gamma)$ whose kernel has no quotient $L(\mu)$ with
$\mu\in\Theta$, by the $\Ext^1$-vanishing just proved. (Note that $\gr P(\gamma)$ is the projective cover
of $L(\gamma)$ in $\gr A$-mod, so it has a simple head.) 
It follows that $(\gr P(\gamma))_\Theta
\cong \gr P_\Theta(\gamma)$ as a $(gr A)_\Theta$-module. Therefore, letting $\gamma\in\Theta$ vary,
\begin{equation}\label{identity} \gr (A_\Theta)\cong(\gr A)_\Theta.\end{equation}. 
 
 Let $J=\ker(B\to B_\Theta)$.  (We sometimes denote $B_\Theta$ by $C$.)
 In the notation of \S2, $\gr B/\gr^\#J\cong \gr C$. Clearly, $\gr^\#J$ is
 an ideal in $\gr B$. Since $\tau$ is
 maximal in $\Gamma$, $P_B(\tau)\cong\Delta(\tau)$ (i.~e., $\Delta(\tau)$ is the projective
 cover of $L(\tau)$ in $\Bmod$. It follows that $\gr\Delta(\tau)$ is the projective cover of
 $L(\tau)$ in $\gr B$-mod. 
 
Since $B$ is a QHA, $J\cong\Delta(\tau)^{\oplus n}$ for some $n>0$.  Next, the head of $\gr^\#J$ has only copies of $L(\tau)$ in it. (According of (\ref{identity}), $\gr C$ is the largest quotient of $\gr A$ with all composition
factors $L(\mu)$, $\mu\in\Theta$. So it is the largest such quotient of $\gr B$.)
Also, there are at most $n$ copies of $L(\tau)$ in this head.
Since $\dim J=\dim \gr^\#J$, it follows that $\gr^\#J\cong\gr\Delta(\tau)^{\oplus n}$. Thus,
$\gr^\#J$ is a heredity ideal and $\gr B$ is a QHA with the indicated standard modules. \end{proof}

\section{Koszul structures on $\gr B$}
 
In this section, let $A$ be a QHA with weight poset $\Lambda$ and let $\Gamma$
be a (non-empty) poset ideal in $\Lambda$. In the notation of \S4, write $\Amod[\Gamma]\cong B$-mod
where $B=A/J$ for a suitable (idempotent) ideal of $A$. Since $B$ is a QHA, it has finite global
dimension. We make the following assumptions.

\begin{hyp}\label{section7hypothesis} Let $A,B$ be as above. The following statements hold.

\begin{itemize}
\item[(1)] There exists a subalgebra $\fa$ of $A$ such that the pair $(A,\fa)$ satisfies Hypothesis \ref{hypothesisofsection6}.
 
\item[(2)] Let $N$ be the global dimension of $B$.  For each $\nu\in\Gamma$, the projective cover $P(\nu)$ of $L(\nu)$ in $\Amod$ is
projective as an $\fa$-module. Moreover, if $N>1$, there is an
exact  complex 
$$0\to\Omega_{N-1}\to P_{N-2}\to \cdots\to P_0\to L(\nu)\to 0$$
in $\Amod$ in which each $P_i$ is a projective $A$-module such that ${P_i}|_{\fa}$ is also projective.
\end{itemize}
\end{hyp}

Let $\fbb$ be the image of $\fa$ under the quotient map $A\to B$.  

\begin{prop}\label{sec6propafterhyp} Assume Hypothesis \ref{section7hypothesis}. For $M,L\in\Bmod$ with $L$ 
completely reducible, we have that the natural maps
$$\begin{cases} (1) \quad \Ext^n_A(M,L)\to\Ext^n_\fa(M,L)\\
(2)\quad \Ext^n_B(M,L)\to\Ext^n_\fbb(M,L)\end{cases}$$
are injective, for all $n\geq 0$.\end{prop}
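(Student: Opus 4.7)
The plan is to establish (1) by induction on $n$ and derive (2) as a formal consequence. The base cases are immediate: $n=0$ is the tautology that restriction preserves distinct $K$-linear maps, while $n=1$ is Proposition \ref{elemresult}, which applies because $(\rad\fa)A=\rad A$ by Hypothesis \ref{hypothesisofsection6}(3).  Note that both base cases hold for arbitrary $M\in\Amod$, not merely for $M\in\Bmod$.

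For the inductive step, I would use a short exact sequence $0\to\Omega\to P\to M\to 0$ in which $P\twoheadrightarrow M$ is the $A$-projective cover.  The crucial observation is that for $M\in\Bmod$ the head of $M$ is a direct sum of $L(\nu_i)$ with $\nu_i\in\Gamma$, so $P$ decomposes as $\bigoplus P(\nu_i)^{k_i}$ with each $\nu_i\in\Gamma$; each $P(\nu_i)$ is $\fa$-projective by Hypothesis \ref{section7hypothesis}(2), and hence so is $P$.  Applying $\Hom_A(-,L)$ and $\Hom_\fa(-,L)$, the common projectivity of $P$ on both sides produces natural isomorphisms
$$\Ext^n_A(\Omega,L)\cong\Ext^{n+1}_A(M,L),\qquad \Ext^n_\fa(\Omega,L)\cong\Ext^{n+1}_\fa(M,L)\qquad (n\geq 1),$$
compatible with the restriction maps.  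Thus injectivity at level $n+1$ for $M$ reduces to injectivity at level $n$ for the syzygy $\Omega$.

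The hard part is that $\Omega$ typically lies outside $\Bmod$, so the naive inductive hypothesis cannot be reapplied to $\Omega$ directly.  To close the induction I would bring in the full force of Hypothesis \ref{section7hypothesis}(2): the length-$(N-1)$ exact complex resolving each $L(\nu)$, $\nu\in\Gamma$, by modules projective over both $A$ and $\fa$ supplies a second family of dimension shifts, now in the $L$-variable, yielding isomorphisms $\Ext^m_?(M,L(\nu))\cong\Ext^{m+N-1}_?(M,\Omega_{N-1})$ for $m\geq 1$ and $?\in\{A,\fa\}$.  Combined with the identification $\Ext^n_A(M,L)\cong\Ext^n_B(M,L)$ of (\ref{derivedembedding}) and the vanishing $\Ext^n_B(M,L)=0$ for $n>N$ (as $N$ equals the global dimension of $B$), this forces the induction to bottom out within finitely many shifts, where the relevant groups on the $A$-side vanish and the injectivity becomes trivial.

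Finally, (2) is a formal consequence of (1).  The restriction $\Ext^n_B(M,L)\to\Ext^n_\fbb(M,L)$ fits into a commutative square
$$\begin{CD} \Ext^n_B(M,L) @>>> \Ext^n_\fbb(M,L) \\ @VV{\cong}V @VVV \\ \Ext^n_A(M,L) @>>> \Ext^n_\fa(M,L), \end{CD}$$
with left vertical arrow the isomorphism (\ref{derivedembedding}), right vertical arrow the inflation through $\fa\twoheadrightarrow\fbb$, and bottom edge injective by (1).  Invertibility of the left arrow then forces injectivity of the top arrow.
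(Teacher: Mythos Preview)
Your base cases, the first syzygy step, the diagnosis that $\Omega$ leaves $\Bmod$, and the deduction of (2) from (1) are all fine and match the paper. The gap is in your proposed fix.

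The claimed isomorphism $\Ext^m_?(M,L(\nu))\cong\Ext^{m+N-1}_?(M,\Omega_{N-1})$ is false in general. A projective resolution of $L(\nu)$ gives dimension shifting only in the \emph{first} variable of $\Ext$; in the second variable you would need the $P_i$ to be acyclic for $\Ext^{>0}_?(M,-)$, i.e.\ injective (or $M$ projective), and nothing in Hypothesis \ref{section7hypothesis} provides that. So the induction does not close this way. Even were the shift valid, $\Omega_{N-1}$ need not lie in $\Bmod$, so the global-dimension vanishing you invoke would not apply to $\Ext^{m+N-1}_A(M,\Omega_{N-1})$ either.

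The paper's remedy is to keep shifting in the \emph{first} variable, but to recognise that Hypothesis \ref{section7hypothesis}(2) supplies exactly what is needed to iterate that shift $N-1$ times. Precisely, one proves the auxiliary statement: if $M$ admits an exact complex
\[
0\to\Omega_m\to P_{m-1}\to\cdots\to P_0\to M\to 0
\]
in $\Amod$ with each $P_i$ projective over both $A$ and $\fa$, then $\Ext^n_A(M,L)\to\Ext^n_\fa(M,L)$ is injective for all $n\le m+1$. This is proved by induction on $m$ (the case $m=0$ is your Proposition \ref{elemresult}; the inductive step is your own syzygy argument applied to $M':=\ker(P_0\to M)$, which now has a complex of length $m-1$). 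Hypothesis \ref{section7hypothesis}(2) gives such a complex with $m=N-1$ for each $M=L(\nu)$, $\nu\in\Gamma$; an iterated horseshoe over a composition series then produces one for every $M\in\Bmod$. Hence injectivity holds for $n\le N$, and for $n>N$ one has $\Ext^n_A(M,L)\cong\Ext^n_B(M,L)=0$ by (\ref{derivedembedding}) and the definition of $N$, so injectivity is trivial there.
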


\begin{proof}  First, we prove the assertion (*) below for $A$-modules $M,L$ with $L$ completely
reducible. 
\begin{itemize}
\item[(*)] Suppose there is a non-negative integer $m$ and an exact complex
$$0\to \Omega_m\to P_{m-1}\to\cdots\to P_0\to M\to 0$$
in $\Amod$   such that each $P_i$ is projective and such that each $P_i|_{\fa}$ is also projective. (We take
this as a vacuous statement if $m=0$.)
Then 
$\Ext^n_{A}(M,L)\to\Ext^n_{\fa}(M,L)$
is injective for $n\leq m+1$.  
\end{itemize}
Assertion (*) follows from Proposition \ref{elemresult} when $m=0$.   So,
without loss, assume that $m>1$ and $n>1$. The kernel $M^{\prime}$ of the 
surjection $P_i\twoheadrightarrow M$ satisfies the hypothesis of (*), provided we 
replace $m$ by $m-1$. However, we may assume by induction that
$\Ext^{n-1}_{A}(M^{\prime},L)\to\Ext^{n-1}_{\fa}(M^{\prime},L)$ is injective for $n\leq m$
(equivalently, $n-1\leq m-1$). But $\Ext^{n-1}_{A}(M^{\prime},L)\cong
\Ext^n_{A}(M,L)$ and $\Ext^{n-1}_{\fa}(M^{\prime},L')\cong\Ext^{n-1}_{\fa}(M,L).$  
This proves the assertion (*).  Its hypothesis is satisfied for $m\leq N-1$ by Hypothesis \ref{section7hypothesis}, while $\Ext^n_A(M,L)=\Ext^n_B(M,L)=0$ for $n>N$ by (\ref{derivedembedding}).  This (1) holds.

 To prove assertion (2) (for $B$-modules), it is only necessary to apply assertion (*), taking $M,L\in\Bmod$, to the top
 horizontal map of the commutative diagram
 $$\begin{CD}
 \Ext^n_{A}(M,L) @>>> \Ext^n_{\fa}(M,L)\\
 @AA{\sim}A  @AAA\\
 \Ext^n_B(M,L) @>>> \Ext^n_\fbb(M,L)\end{CD}$$
 The left-hand vertical map is an isomorphism by (\ref{derivedembedding}). The required injectivity of (2)
 follows.
\end{proof}

In earlier sections, we worked with the standard modules $\Delta(\lambda)$, $\lambda\in\Lambda$,
in the highest weight category $\Amod$.
These results  have evident dual formulations for the modules $\nabla(\lambda)$. These dual
formulations often do not need to be proved separately in   case the category $\Amod$ has a duality ${\mathfrak d}:\Amod\to(\Amod)^{\text{op}}$.  (Thus, $\mathfrak d$ is a contravariant category
equivalence such that ${\mathfrak d}L\cong L$ for all irreducible $A$-modules.) In this case,
${\mathfrak d}\Delta^i(\lambda)\cong\nabla_{-i}(\lambda),$ $ \forall\lambda\in\Lambda, \forall i\in\mathbb N,$
 and   $\Ext_A^\bullet(M,N)
\cong\Ext^\bullet_A({\mathfrak d}N,{\mathfrak d}M)$, $\forall M,N\in\Amod.$ Thus, in the presence of a duality, the verification of condition
(SKL$'$), as stated in \S4, simplifies to just checking just one of the two conditions, say condition (1) involving the $\Delta^i(\lambda)$. In the applications in \S8, a natural duality is present.

\begin{thm}\label{Koszultheorem}
Assume Hypothesis \ref{section7hypothesis} above holds and that $\Amod$ or $\Bmod$ has a duality $\frak d$. Also, assume that $\fa$ is Koszul, and that $\Bmod$ has the (KL) property with respect to a given length
function $l$. Then $B$ satisfies the (SKL$'$) property with respect to $l$.
\end{thm}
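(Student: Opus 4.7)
The plan is to induct on the radical depth $i$, transferring parity questions from $B$ to the Koszul subalgebra $\fa$ via the injections of Proposition \ref{sec6propafterhyp}. The duality $\mathfrak d$ interchanges $\Delta^i(\lambda)$ with $\nabla_{-i}(\lambda)$ and reverses Ext groups while preserving homological degree, so condition (2) of (SKL$'$) follows from condition (1); it therefore suffices to prove
$$\Ext^n_B(\Delta^i(\lambda), L(\mu)) \neq 0 \implies n \equiv l(\lambda) - l(\mu) + i \pmod 2$$
for all $\lambda,\mu \in \Gamma$ and $n,i \geq 0$. The base case $i = 0$ is exactly the (KL) hypothesis for $B$.

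For the inductive step, I would first use Theorem \ref{MainInfinitesimalThm} (which applies because Hypothesis \ref{section7hypothesis}(2) ensures each $P(\lambda)$ is $\fa$-projective) to equip $\Delta(\lambda)|_\fa$ with a graded $\fa$-module structure generated in grade $0$. Since $\rad A = (\rad\fa) A$, the $A$- and $\fa$-radical filtrations on $\Delta(\lambda)$ coincide, so $\Delta^i(\lambda)|_\fa$ is a graded $\fa$-submodule generated in grade $i$. Via (\ref{derivedembedding}) and Proposition \ref{sec6propafterhyp}(1), $\Ext^n_B(\Delta^i(\lambda), L(\mu))$ injects into $\Ext^n_\fa(\Delta^i(\lambda), L(\mu))$, which by (\ref{Extgrouprelationstograded}) decomposes as $\bigoplus_r \grExt^n_\fa(\Delta^i(\lambda), L(\mu)(r))$. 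Koszulity of $\fa$ imposes the grade lock $\grExt^n_\fa(L(\nu), L(\mu)(r)) = 0$ unless $r = n$; iterating the $\fa$-projective resolutions of the simples in $\Amod$ furnished by Hypothesis \ref{section7hypothesis}(2), this grade lock can be propagated through the radical layers of $\Delta(\lambda)$ to give the concentration $r = n + i$ for Ext out of $\Delta^i(\lambda)$. The parity itself then emerges from the long exact sequence of $\Ext^\bullet_B(-, L(\mu))$ associated to $0 \to \Delta^i(\lambda) \to \Delta^{i-1}(\lambda) \to \Delta^{i-1}(\lambda)/\Delta^i(\lambda) \to 0$: the term $\Ext^n_B(\Delta^{i-1}(\lambda), L(\mu))$ has parity $l(\lambda) - l(\mu) + i - 1$ by induction, while the completely reducible middle quotient contributes summands $\Ext^{n+1}_B(L(\nu), L(\mu))$ whose parity is controlled by combining the (KL) hypothesis with the Koszul grade lock on $\fa$, yielding the required parity $n \equiv l(\lambda) - l(\mu) + i$ for the middle.

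The main obstacle is bridging two a priori unrelated pieces of data: the Koszul grade on $\fa$, which pairs the homological degree $n$ with the grade $r$, and the length function $l$ on $\Lambda$, which dictates parity through $l(\lambda) - l(\mu)$. The (KL) hypothesis pins down this bridge at $i = 0$, and the Koszul grade lock on $\fa$ is the mechanism by which the pairing propagates across the radical layers of $\Delta(\lambda)$ with the correct shift by $+i$. Hypothesis \ref{section7hypothesis}(2), providing $\fa$-projective $A$-resolutions of the simples $L(\nu)$, is the indispensable tool that allows the grade comparison of \S6 (which a priori controls only $\Ext^1$ through diagrams such as (\ref{bigdiagram})) to iterate to arbitrary higher $\Ext^n$; without this hypothesis the inductive step would stall at homological degree one.
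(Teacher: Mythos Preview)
Your overall strategy---reduce to condition (1) via duality, invoke Theorem \ref{MainInfinitesimalThm} for the $\fa$-grading on $\Delta(\lambda)$, and transfer to $\fa$ via Proposition \ref{sec6propafterhyp}---matches the paper. The gap is in your ``grade lock'' claim and the long exact sequence step that depends on it.

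You assert that Koszulity propagates through the radical layers to give $\grExt^n_\fa(\Delta^i(\lambda), L(\mu)(r)) = 0$ unless $r = n+i$. This is false as stated: $\Delta^i(\lambda)|_\fa$ has composition factors in all grades $\geq i$, so Koszulity of $\fa$ yields only the one-sided bound $r \geq n+i$. The equality $r = n+i$ would force $\Delta(\lambda)|_\fa$ to be linear, which is essentially the conclusion of the theorem (cf.\ Corollary \ref{grcorollary}), not an input. Without the two-sided lock, your long exact sequence for $0 \to \Delta^i \to \Delta^{i-1} \to \Delta^{i-1,i} \to 0$ does not pin down the parity: the flanking terms $\Ext^n_B(\Delta^{i-1}, L(\mu))$ and $\Ext^{n+1}_B(\Delta^{i-1,i}, L(\mu))$ carry \emph{opposite} parities ($i-1$ and $i$ respectively), so you must first show that the map $\Ext^n_B(\Delta^{i-1}, L(\mu)) \to \Ext^n_B(\Delta^i, L(\mu))$ vanishes. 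That vanishing is exactly the crux of the paper's proof (display (\ref{reallytoshow})), and it is not a formality.

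The paper proves this vanishing by a grade-tracking diagram chase that exploits only the one-sided Koszul bounds. The idea is to push $x \in \Ext^n_A(\Delta^r, L)$ forward to its image $y$ in $\Ext^{n+1}_\fa(\Delta'^{0,r+1}, L) \cong \bigoplus_s \grExt^{n+1}_\fa(\Delta'^{0,r+1}, L(s))$, using injectivity from (\ref{afterSKL}) and Proposition \ref{sec6propafterhyp}. One commutative path through $\Ext^n_\fa(\Delta'^{r+1}, L)$ (grades $\geq r+1$) forces the $s$-component of $y$ to vanish unless $s \geq r+n+1$; another path, through the upper plane via $\Ext^{n+1}_\fa(\Delta'^{0,r}, L)$ (grades $\leq r-1$), forces it to vanish unless $s \leq r+n$. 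Since the vertical maps preserve the grade $s$, these constraints are incompatible and $y=0$, hence the image of $x$ in $\Ext^n_A(\Delta^{r+1}, L)$ is zero. Once this vanishing is in hand, surjectivity of $\Ext^n_A(\Delta^{r,r+1}, L) \to \Ext^n_A(\Delta^r, L)$ follows, and the parity for $\Delta^r$ is read off from the composition factors $L(\tau)$ of $\Delta^{r,r+1}$ via \cite[Cor.~5.4(b)]{CPS1} (which gives $l(\tau) \equiv l(\lambda)+r$ mod $2$) together with the parity for $\Ext^n_A(L(\tau), L(\mu))$ that (KL) already provides.
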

\begin{proof} Using (\ref{derivedembedding}), it suffices to show that the condition (SKL$'$) holds for the algebra $A$ as long as $\lambda,\mu\in\Gamma$.  Also,
given $\lambda\in\Gamma$, $L(\lambda)$ has a projective cover in $\Amod$ which is $\fa$-projective.
Thus, by Theorem \ref{MainInfinitesimalThm}, $\Delta(\lambda)$ has a $\fa$-graded structure generated by its grade 0 term and satisfying the property that $\Delta(\lambda)|_{\fa}$ is $A_0$-stable.

  In the discussion below, we take
$\lambda,\mu\in\Gamma$.
It suffices to show  that (SKL$'$)(1) holds for each $i>0$ since the assumed (KL) property (for weights in $\Gamma$) implies the case $i=0$. Denote $\Delta(\lambda)$ by $\Delta$, and let
$\Delta'=\Delta|_\fa\in\fagrmod_0$.   Put $L=L(\mu)$. We will make use of the
following commutative diagram (with the evident maps):

{\small
\begin{picture}(450,400)


\thicklines

\put(120,50){\vector(4,1){150}}

\put(120,160){\vector(4,1){150}}

\put(120,160){\vector(0,-1){110}}

\put(270,197){\vector(0,-1){110}}


\thinlines

\put(56,120){\vector(4,1){145}}

\put(201,261){\vector(0,-1){105}}

\thicklines

\put(56,225){\vector(4,1){145}}

\put(56,225){\vector(0,-1){105}}


\thinlines

\put(-4,190){\vector(4,1){140}}

\put(136,324){\vector(0,-1){100}}

\thicklines

\put(-4,290){\vector(4,1){140}}

\put(-4,290){\vector(0,-1){100}}

\bezier{1000}(-5,290)(120,160)(120,160)

\bezier{1000}(-5,190)(120,50)(120,50)

\bezier{1000}(135,325)(270,196)(270,196)

\thinlines

\bezier{1000}(135,225)(267,89)(267,89)

\thicklines

\put(263,94){\vector(1,-1){5}}

\put(195,163){\vector(1,-1){5}}

\put(112,59){\vector(1,-1){5}}

\put(47,132){\vector(1,-1){5}}

\put(114,166){\vector(1,-1){5}}

\put(51,232){\vector(1,-1){5}}

\put(262,204){\vector(1,-1){5}}

\put(194,268){\vector(1,-1){5}}

\put(54,222){\Large $\bullet$}

\put(133,321){\Large $\bullet$}

\put(-60,296){$\Extone$}

\put(135,332){$\Exttwo$}

\put(-89,233){$\Extthree$}

\put(202,266){$\Extfour$}

\put(-60,86){$\Extfive$}

\put(273,200){$\Extsix$}

\put(-81,187){$\Extseven$}

\put(-21,116){$\Extnine$}

\put(2,42){$\Exteleven$}

\put(272,85){$\Exttwelve$}

\put(32,355){$\Exteight$}

\put(120,5){$\Extten$}

\curvedashes[1mm]{0,1,2}

\put(-60,20){\curve(176,136, 120,60, 60,60)}

\put(-30,44){\curve(-25,180, 20,170, 81,180)}

\put(100,44){\curve(-25,300, 10,170, 36,180)}

\put(23,44){\curve(150,-20, 164,45, 178,110)}
\curvedashes{}

\end{picture}
}

To prove (SKL$'$) for $r=i>0$, it suffices to show
\begin{equation}\label{toshow}\Ext^n_A(\Delta^{r,r+1},L)
\to\Ext^n_A(\Delta^r,L)\,\,{\text{\rm is surjective, for $n\geq 0$.}}\end{equation}
 Indeed, assuming (\ref{toshow}), if $\Ext^n_A(\Delta^r,L)\not=0$, there is
an irreducible constituent $L(\tau)$ of $\Delta^{r,r+1}=\Delta^r/\Delta^{r+1}$ such that
$\Ext^n_A(L(\tau),L)\not=0$, whence $n\equiv l(\tau)-l(\mu)$ mod$\,2$. On the other hand, any
irreducible constituent $L(\tau)$ of $\Delta^{r,r+1}$ necessarily satisfies $l(\tau)\equiv r +l(\lambda)$
mod$\,2$,  using  \cite[Cor. 5.4(b)]{CPS1}. So, finally, $n\equiv l(\lambda)=l(\mu)+r$ mod$\,2$, as required in
(SKL$'$). But (\ref{toshow}) is
clear for $n=0$, so assume that $n>0$. Then, relabeling $n$ as $n+1$, it suffices to show
\begin{equation}\label{reallytoshow}\Ext^n_A(\Delta^r,L)\to\Ext^n_A(\Delta^{r+1},L)\quad
{\text{\rm is the zero map,}}\quad \forall r>0, \forall n\in{\mathbb N}.\end{equation}
The maps
$ \Ext^n_A(\Delta^{r+1},L)\to\Ext^{n+1}_A(\Delta^{0,r+1},L)$  and $
\Ext_A^{n+1}(\Delta^{0,r+1},L)\to\Ext^{n+1}_\fa(\Delta^{\prime 0,r+1},L)$ are injective by (\ref{afterSKL})
and Proposition \ref{sec6propafterhyp}, respectively.

Since $\fa$ is Koszul, we can assume $s\leq r+n+1$ in the isomorphism
$\Ext^{n+1}_\fa(\Delta^{\prime 0,r+1},L)\cong\bigoplus_{s\in\mathbb Z}\grExt^{n+1}_\fa(\Delta^{\prime 0,r+1},L(s))$.
 The Koszul property gives similar (indicated) constraints
in the commutative diagram above.
Following  $x\in\Ext^n_A(\Delta^r,L)$ through the maps above, and considering alternative paths in the lower plane, the image  $y$ of
$x$ in
$\bigoplus_{s\leq r+n+1}\grExt_\fa^{n+1}(\Delta^{\prime 0,r+1},L(s))$ lies in  $\grExt^{n+1}_\fa(\Delta^{\prime 0,r+1},L(s))$ with
$s=r+n+1$.
Also, it comes by  a path in the upper plane of the diagram from an element in
$\bigoplus_{s\leq (r-1)+n+1}\grExt^{n+1}_\fa(\Delta^{\prime 0,r+1},L(s))$ with $s=r+n$.
The vertical maps on $\grExt$'s preserve the terms with coefficients $L(s)$ for any given $s$. So  $y=0$ and
the image of $x$ in $\Ext^n_A(\Delta^{r+1},L)$ is zero. Thus, (\ref{reallytoshow}), and hence (\ref{toshow}), hold.
\end{proof}

\begin{cor}\label{grcorollary} Assume the hypothesis of Theorem \ref{Koszultheorem} holds. Then $\gr B$ is a graded QHA with the same poset $\Gamma$ as
$B$, and with standard modules $\gr\Delta(\lambda)$, $\lambda\in\Gamma$. Also, $\gr B$ satisfies the graded Kazhdan-Lusztig
property with respect to $l|_\Gamma$. In particular, $\gr B$ is a Koszul algebra. It has the same
Kazhdan-Lusztig polynomials as $B$.  The standard modules for $\gr B$ are linear (and the costandard modules
satisfy a dual property).  \end{cor}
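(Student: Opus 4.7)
The plan is to deduce everything directly from the theorem just proved together with the earlier Theorem \ref{SKLtheorem}, which packages the formal consequences of (SKL$'$) into graded form. The main work has already been done: Theorem \ref{Koszultheorem} establishes that $B$ is a QHA satisfying (SKL$'$) with respect to $l$, and Theorem \ref{SKLtheorem} is stated for an arbitrary QHA satisfying (SKL$'$). So the strategy is simply to apply the latter with ``$A$" replaced by $B$ and weight poset $\Gamma$.

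First, invoke Theorem \ref{Koszultheorem} to record that $B$ (a QHA with poset $\Gamma$) satisfies (SKL$'$) relative to $l|_\Gamma$. Next, apply Theorem \ref{SKLtheorem} with $A$ replaced by $B$. This gives at once that $\gr B$ is a graded QHA with poset $\Gamma$, with standard modules $\gr\Delta(\lambda)$ and costandard modules $\gr\nabla(\lambda)$ (for $\lambda\in\Gamma$), that $\gr B$ satisfies the graded Kazhdan-Lusztig property with respect to $l|_\Gamma$, that $\gr B$ is therefore Koszul, and that $B$ and $\gr B$ have the same Kazhdan-Lusztig polynomials.

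It remains to extract the linearity assertion about the standard modules. By the graded Kazhdan-Lusztig property for $\gr B$, for each $\lambda\in\Gamma$,
\[
\grExt^n_{\gr B}(\gr\Delta(\lambda),L(\mu)(m))\neq 0 \implies m=n\equiv l(\lambda)-l(\mu)\bmod 2,
\]
for all $\mu\in\Gamma$ and all $n,m\in\mathbb N$. Taking a minimal graded projective resolution $P_\bullet\twoheadrightarrow \gr\Delta(\lambda)$ in $\gr B$-grmod, the head of $P_n$ is a sum of irreducibles $L(\mu)(m)$ with $m$ equal to the unique integer satisfying $\grExt^n_{\gr B}(\gr\Delta(\lambda),L(\mu)(m))\neq 0$, namely $m=n$. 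Hence each $P_n$ is generated in grade $n$, which is precisely the statement that $\gr\Delta(\lambda)$ is linear. The dual property for the costandard modules $\gr\nabla(\lambda)$ follows by the same argument applied to the dual half of the graded Kazhdan-Lusztig property (or via the duality $\mathfrak{d}$, when available).

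I do not expect any serious obstacle here: all the genuine difficulty is concentrated in Theorem \ref{Koszultheorem}, whose proof produces (SKL$'$) for $B$ from the Koszulity of $\fa$ and the (KL) assumption on $\Bmod$. The corollary is essentially a bookkeeping consequence of combining that output with Theorem \ref{SKLtheorem}, the only additional observation being the short derivation of linearity from the graded Kazhdan-Lusztig property.
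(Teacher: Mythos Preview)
Your proposal is correct and follows essentially the same route as the paper: invoke Theorem \ref{Koszultheorem} to obtain (SKL$'$) for $B$, then apply Theorem \ref{SKLtheorem} (with $B$ in place of $A$) to conclude everything about $\gr B$. The only difference is that you spell out the short derivation of linearity from the graded Kazhdan-Lusztig property, whereas the paper simply remarks that linearity is essentially built into the definition of that property.
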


This follows by combining Theorem \ref{Koszultheorem} and Theorem \ref{SKLtheorem}. The linearity
of standard modules (and the dual property for costandard modules) is essentially part of the definition of a graded Kazhdan-Lusztig property; see \S4 and the discussion above (2.0.4).  

 Suppose that $C$ is a finite dimensional algebra over a field $K$ with representatives $L_1,\cdots, L_m$
 from the isomorphism classes of distinct irreducible modules.  Put $M=\bigoplus L_i$. The homological dual $C^!$ 
 is Morita equivalent to the Yoneda Ext-algebra $\Ext^\bullet_C(M,M)$. 

\begin{cor}\label{koszuldual} Assume the hypotheses of Theorem \ref{Koszultheorem} hold. Then $B^!$-grmod is a graded HWC with a graded Kazhdan-Lusztig theory. It has weight poset $\Gamma^{\text{\rm op}}$ (opposite poset to $\Gamma$),
length function $l^{\text{\rm op}}=-l$, and Kazhdan-Lusztig polynomials
$F_{\nu,\lambda}:=(-t)^{l(\nu)-l(\lambda)}\sum_n[\gr\Delta(\nu):L(\lambda)(n)](-t)^{-n}.$
In particular, $B^!$ is a Koszul algebra. Also, $(\gr B)^!\cong B^!$.\end{cor}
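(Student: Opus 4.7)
\textbf{Proof proposal for Corollary \ref{koszuldual}.}
The plan is to reduce everything to $\gr B$ by first establishing a ring isomorphism $B^!\cong (\gr B)^!$, and then invoking the general Koszul duality dictionary for graded quasi-hereditary algebras carrying a graded Kazhdan-Lusztig theory, as developed in \cite{BGS}, \cite{CPS1}, \cite{CPS2}. By Corollary \ref{grcorollary}, $\gr B$ is already a Koszul graded QHA with weight poset $\Gamma$, length function $l|_\Gamma$, standard modules $\gr\Delta(\lambda)$, and a graded Kazhdan-Lusztig theory, which supplies exactly the input needed on the $\gr B$-side.

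First I would prove $B^!\cong (\gr B)^!$ as graded algebras. The (SKL$'$) property delivered by Theorem \ref{Koszultheorem} forces, for each $\nu\in\Gamma$, the radical filtration of a minimal projective resolution $P_\bullet\twoheadrightarrow L(\nu)$ in $\Bmod$ to be \emph{linear} when passed to $\gr B$-grmod: indeed (SKL$'$) guarantees the parity compatibility $\Ext^n_B(\rad^i L(\nu),L(\mu))\ne 0\Rightarrow n\equiv l(\nu)-l(\mu)-i\pmod 2$, which combined with the Hypothesis \ref{section7hypothesis}(2) hypotheses and the identifications in \S3 (especially Theorem \ref{movinglemma} and Proposition \ref{sec6propafterhyp}) forces $\gr P_\bullet$ to be a linear minimal projective resolution of $L(\nu)$ over $\gr B$. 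This yields $\dim \Ext^n_B(L(\mu),L(\nu))=\dim\grExt^n_{\gr B}(L(\mu),L(\nu)(n))$, and a careful tracking of Yoneda products through the $\gr$-construction upgrades these dimension equalities to a graded algebra isomorphism $B^!\overset{\sim}\longrightarrow (\gr B)^!$.

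Next I would invoke Koszul duality for $\gr B$. Because $\gr B$-grmod is a graded HWC with a graded Kazhdan-Lusztig theory, the standard modules $\gr\Delta(\lambda)$ are linear, and their linear projective resolutions are controlled by the graded Ext-algebra in such a way that $(\gr B)^!$-grmod inherits a graded highest weight structure with weight poset $\Gamma^{\text{op}}$ (the reversal reflects the exchange between projective covers of simples and simples themselves under Koszul duality) and length function $-l|_\Gamma$ (because the graded KL property identifies internal grading with cohomological degree, and Koszul duality negates this identification). The resulting module category again satisfies a graded Kazhdan-Lusztig property, and so $(\gr B)^!$, and hence $B^!$, is Koszul.

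Finally, the Kazhdan-Lusztig polynomials of $B^!$ are computed by specializing definition (\ref{polynomial}) to the dual HWC. Since the standard modules of $(\gr B)^!$ arise from the linear resolutions of simples of $\gr B$, the relevant $\grExt$-dimensions on the $(\gr B)^!$-side are the dimensions $\dim\grExt^n_{\gr B}(L(\nu),L(\lambda)(n))$, which by graded Brauer-Humphreys reciprocity (Proposition \ref{gradedfiltrations}) coincide with $[\gr\Delta(\nu):L(\lambda)(n)]$. Substituting into (\ref{polynomial}), with $l$ replaced by $-l$ and with the parity flip $t\leadsto -t$ forced by the reversal of homological parity, yields the stated formula for $F_{\nu,\lambda}$.

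The main obstacle, I expect, is the first step: dimension-matching of ungraded Ext-groups between simples is the easy part, but Yoneda multiplication need not be preserved by $\gr$ in general. Promoting the dimension equality to a ring isomorphism will require the full force of Theorem \ref{MainInfinitesimalThm} applied to syzygies of simples, together with the linearity of the $\gr B$-resolutions forced by (SKL$'$), in order to identify Yoneda composites on both sides.
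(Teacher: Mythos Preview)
Your plan reverses the logical order of the paper's argument and, in doing so, shoulders an unnecessary and genuinely difficult burden.

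The paper's proof is short. Theorem \ref{Koszultheorem} gives (SKL$'$) for $B$. All assertions about $B^!$ except the final isomorphism then follow directly from \cite[Thm.~3.8]{CPS2}, which packages together the graded highest weight structure on $B^!$-grmod, the graded Kazhdan-Lusztig property, Koszulity of $B^!$, and the formula for $F_{\nu,\lambda}$, all as consequences of (SKL$'$). For the remaining isomorphism $(\gr B)^!\cong B^!$, the paper quotes \cite[Thm.~2.2.1(a)]{CPS5}, which gives $B^{!!}\cong\gr B$; applying $(-)^!$ yields $(\gr B)^!\cong B^{!!!}$, and since $B^!$ is already known to be Koszul one has $B^{!!!}=(B^!)^{!!}\cong B^!$.

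Your route tries to establish $B^!\cong(\gr B)^!$ first, by hand, and then transfer structure from $(\gr B)^!$. That first step contains the gap you yourself flag: matching dimensions of $\Ext$-groups between simples is straightforward, but upgrading this to a ring isomorphism compatible with Yoneda products via the $\gr$-construction is not done anywhere in the paper and would require substantial new work. Your invocation of (SKL$'$) there is also miswritten: (SKL$'$) concerns $\Ext^n_A(\Delta^i(\lambda),L(\mu))$ with $\Delta^i(\lambda)=\rad^i\Delta(\lambda)$, not $\Ext^n_B(\rad^i L(\nu),L(\mu))$; the latter is vacuous for $i\geq 1$ since $L(\nu)$ is irreducible. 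In your third step, Proposition \ref{gradedfiltrations} is not the right tool: it relates $[Q:\nabla(\tau)(s)]$ to $\grHom$-dimensions, not $\grExt$-dimensions between simples to composition multiplicities $[\gr\Delta(\nu):L(\lambda)(n)]$; the actual identification comes packaged inside \cite[Thm.~3.8]{CPS2}.

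The point is that (SKL$'$) is already strong enough for the cited theorems to handle $B^!$ directly; the isomorphism $(\gr B)^!\cong B^!$ is then a one-line consequence of $B^{!!}\cong\gr B$ together with the Koszulity of $B^!$, rather than a starting point requiring a delicate Yoneda-product analysis.
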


\begin{proof} By Theorem \ref{Koszultheorem},  $B$ satisfies the condition (SKL$^\prime$), so that all the
assertions, except the last follow from \cite[Thm. 3.8]{CPS2}. Finally, by
 \cite[Thm. 2.2.1(a)]{CPS5},  $B^{!!}\cong \gr B$.  Hence, $(\gr B)^!\cong B^{!!!}
\cong B^!$. \end{proof}

The polynomials $F_{\nu,\lambda}$, $\lambda,\mu\in\Gamma$, in Corollary \ref{koszuldual} identify
with the inverse Kazhdan-Lusztig polynomials $Q_{\lambda,\nu}$ associated to the Kazhdan-Lusztig polynomials $P_{\lambda,\nu}$ of $\gr B$-mod
(or equivalently $\Bmod$). These polynomials are defined by the condition that
$\sum_{\lambda\leq \nu\leq\lambda'}(-1)^{l(\lambda')-\l(\nu)}Q_{\lambda,\nu}P_{\nu,\lambda'}=\delta_{\lambda,\lambda'}.$
See \cite[\S3]{CPS1}---where our $Q_{\lambda,\nu}$ here is  denoted $Q_{\nu,\lambda}$.

  In the formula for $F_{\nu,\lambda}$
in Corollary \ref{koszuldual},  $[\gr\Delta(\nu):L(\lambda)(n)]\not=0$ if and only if $L(\lambda)$ appears as a constituent
of $\rad^n\Delta(\lambda)/\rad^{n+1}\Delta(\lambda)$. Thus, in that case,  $l(\lambda)-l(\nu)\equiv n$ mod $2$ (a fact
already used in the proof of Theorem \ref{Koszultheorem}). Therefore, in the
expression for $F_{\nu,\lambda}$, the ``$-t$" can be replaced by ``$t$". We have maintained the $-t$ in the definition
of $F_{\nu,\lambda}$ to be consistent with \cite{CPS1}.\footnote{We take this opportunity to record that the discussion of these polynomials in \cite[(2.1.3)]{CPS5} contains a misprint,
which is corrected by changing the two separate occurrences of $t$ and $-t$ to either both $-t$, or both $t$.}

\begin{rem}  
 If Hypothesis \ref{section7hypothesis} holds with  $B=A$, it is possible to prove a
``bare-bones" version of Koszulity in an elementary way. The argument for
Koszulity of $\gr B$ in this case  does not is require the Hopf algebra setting of \S5
or results of \S6, and is given below in a concise  form.
Nevertheless, while it provides an interesting model, there are reasons
which prevent this simple and elegant result from being a cornerstone in our
theory. First, the hypotheses do not actually occur in any of the regular weight cases of
\S\S8,9,10; a main reason is that we must use appropriately truncated projective modules for $B$. Second, the conclusion does not give us everything we want. Even if the hypotheses were
to apply to conclude that $\gr B$ is Koszul, and if we also assume  the (much deeper) quasi-hereditary
property of $\gr B$ from \S6, we would still not be able to conclude that $\gr
B$-mod satisfied the (KL) property,   even when $B$ has such a theory. Also, we would not have the
other conclusions of Corollaries \ref{grcorollary} and \ref{koszuldual}. All these conclusions are very
valuable for the larger program of this paper, which is to transport all or
most good properties (especially those arising in Lie theory) known for an
algebra $B$ to $\gr B$, while at the same time providing conditions which show
$\gr B$ is Koszul.

    Nevertheless, an alternative approach to Theorem \ref{Koszultheorem} would be welcomed
for establishing these other conclusions, and we raise the existence of
effective alternatives as an open question.  We hope to provide some
variations ourselves in a future work  \cite{PS5}
   studying gradings in the setting of integral quasi-hereditary algebras
and highest weight categories \cite{CPS2} and \cite{DS}. \end{rem}

\begin{prop}\label{KosFact} Suppose $B$ is a finite dimensional algebra with a
subalgebra $\fa$ satisfying the conditions.

\begin{itemize}
\item[(1)]
$\fa$ is positively graded and Koszul,

  \item[(2)] $(\rad \fa)B=\rad B$.
  
\item[(3)]   Every projective $B$-module is projective as an $\fa$-module. (Equivalently,
the $\fa$-module $B$ is projective as a left $\fa$-module.)\end{itemize} 
   Then $\gr B$ is Koszul.\end{prop}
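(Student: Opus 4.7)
My plan is to exploit, under hypotheses (2) and (3), that $\gr B$ is a graded free left $\fa$-module. From this, minimal graded projective covers over $\gr B$ restrict correctly to minimal graded projective covers over $\fa$, and Koszulity of $\fa$ then forces the minimal graded $\gr B$-resolution of any simple to be linear.

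The first step is a routine induction: (2) gives $\rad^n B = (\rad \fa)^n B$ for all $n \geq 0$ (as in the proof of Proposition \ref{grProp1}), so $(\gr B)_n = (\rad \fa)^n B / (\rad \fa)^{n+1} B$. Applying (3) to $P = B$ itself, $B$ is projective as a left $\fa$-module. Since $\fa$ is Koszul (hence tightly graded with semisimple $\fa_0$), every projective $\fa$-module has the form $\fa \otimes_{\fa_0} V$; taking $V = B/\rad B =: B_0$ yields an $\fa$-module isomorphism $B \cong \fa \otimes_{\fa_0} B_0$. Combining, one obtains a graded $\fa$-module isomorphism $\gr B \cong \fa \otimes_{\fa_0} B_0$, with $B_0$ concentrated in grade $0$. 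In particular $\gr B$ is graded free over $\fa$, so every graded $\gr B$-projective is graded $\fa$-projective; moreover, for any such projective $P$ one has $\rad_{\gr B} P = (\rad \fa)(\gr B) P = (\rad \fa) P = \rad_{\fa} P$, so $P$ has the same head and the same grading whether viewed over $\gr B$ or over $\fa$.

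Now let $L$ be a simple $\gr B$-module in grade $0$, and construct its minimal graded $\gr B$-projective resolution $Q_\bullet \to L$ by iterated projective covers. At each stage the $\gr B$-projective cover, restricted to $\fa$, is the $\fa$-projective cover of the corresponding kernel (by the matching of heads above), so $Q_\bullet \to L$, viewed as a complex of graded $\fa$-modules, is the minimal graded $\fa$-projective resolution of $L|_{\fa}$ (a semisimple $\fa_0$-module concentrated in grade $0$). Koszulity of $\fa$ then forces each $Q_i$ to be generated in grade $i$ as a graded $\fa$-module. Because the grading agrees, $Q_i$ is also generated in grade $i$ as a graded $\gr B$-module, and $\gr B$ is Koszul.

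The only substantive step is the graded identification $\gr B \cong \fa \otimes_{\fa_0} B_0$, which couples the radical identity from (2) with $\fa$-freeness of $B$ from (3); once this is in place, the passage from Koszulity of $\fa$ to Koszulity of $\gr B$ is formal via the compatibility of minimal projective covers under restriction.
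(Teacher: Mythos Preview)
Your proof is correct and follows essentially the same route as the paper's: both arguments take a minimal graded $\gr B$-projective resolution of a simple $L$, observe that it restricts to a minimal graded $\fa$-projective resolution of the semisimple $\fa$-module $L|_{\fa}$, and then invoke Koszulity of $\fa$ to pin each term in the correct grade. The only difference is packaging: you make the structural fact explicit via the graded isomorphism $\gr B \cong \fa \otimes_{\fa_0} B_0$, whereas the paper argues more tersely that $\gr P$ is $\fa$-projective because $\gr_B P = \gr_{\fa} P$ and $\gr$ preserves projectivity (using $\fa \cong \gr\,\fa$).
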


 \begin{proof} If $P$ is a projective $B$-module,  $\gr P$ is projective as a $\gr B$
module and as an $\fa$-module (as follows from the projectivity of $\gr \fa\cong
\fa$), and every projective $\gr B$ module arises this way. If $P$ has
a simple head as a $B$-module, then $\gr P$ has a simple head as a $\gr B$ -module,
and is semisimple as an $\fa$-module; also $\gr P$ may be regarded as a graded projective module
generated in degree $0$ (for either $\gr B$ or $\fa$).
   Henceforth, we will just consider projective modules for $\gr B$, and just
denote them by $P$ to simplify notation. For any irreducible $\gr B$-module $L$, regarded as graded with pure degree $0$, let
                      $\cdots\to P_n\to \cdots \to P_0\to L\to 0$
   be a minimal graded projective resolution of $L$. This resolution is also
minimal as an ungraded projective resolution, and as a graded or ungraded
projective resolution
   of the (completely reducible) restriction of $L$ to $\fa$.  Thus, by
Koszulity of $\fa$, the head of  $P_n$ is pure of degree $n$ as an $\fa$-module.
This graded module $P_n$, and its (graded) head, are just the restriction to $\fa$ of
the graded module $P_n$ for $\gr B$ and its $\gr B$ head. Thus, the
(graded) head of $P_n$, viewed as a graded  $\gr B$-module, has grade $n$.  Since this
holds for all irreducible modules $L$ for $\gr B$, the algebra $\gr B$ is Koszul. \end{proof}

\vskip.1in
\begin{center}{{\large\bf Part II: Applications}}\end{center}
 
Part II, which consists of \S\S8--10, takes up applications of Part I to specific finite dimensional algebras
 arising in representation theory. These algebras include:

\begin{itemize}
\item[(1)]  Quasi-hereditary algebras $B=U_\Gamma$
attached to a quantum enveloping algebras $U=U_\zeta$ in characteristic 0 at an $e$th root of unity ($e$ odd). For the definition of the quotient algebra $U_\Gamma$ of $U$, see (\ref{Asubgamma}). Here the non-empty, finite weight poset $\Gamma$ is required to consist of  $e$-regular weights. Thus,  it must hold that $e\geq h$ (the Coxeter number
of $U_\zeta$; in fact, it will be necessary to assume that $e>h$).
See \S8.

\item[(2)] Regular blocks of $q$-Schur algebras $S_q(n,r)$. This section is largely a formal recasting of
the results in (1) in the special case of type $A$. See \S9.

\item[(3)] Quasi-hereditary algebras $B=U_\Gamma$ attached to the distribution algebra $U={\text{\rm Dist}}(G)$ of a simple, simply connected algebraic group $G$ in (large) positive
characteristic $p$; see \S10 for more details, including the definition of $U_\Gamma$. Here also the non-empty finite weight poset $\Gamma$ is required to consist of $p$-regular weights. 

\end{itemize}

\medskip 
In these cases, considerable effort is required to establish the various  hypotheses needed
for the results of Part I.  For example, in case (3), the validity of the Lusztig character formula (LCF) for regular weights
in the Jantzen region 
is essential.  (See Remark \ref{LCF}.)  
With current technology, this validity is known for $p$ very large. See \cite{Fiebig} for a specific
bound.  It is expected that $p\geq h$ will ultimately be shown to suffice. The validity of the LCF means that, by 
 Andersen-Jantzen-Soergel \cite{AJS}, with $p>h$, the $p$-regular blocks of the restricted universal enveloping algebra $u$ are Koszul;  also for $e>h$, the regular blocks of the small quantum group $u_\zeta$ of $U=U_\zeta$  are also Koszul \cite{AJS}.

The conditions given in Hypotheses \ref{hypothesisofsection6} and  \ref{section7hypothesis} need to be checked. In this setup, there is a pair $(A,\fa)$ as in Part I, and $B=U_\Gamma$, where $\Gamma$ is a poset ideal in
the weight poset $\Lambda$ of $A=U_\Lambda$. (Thus, $B=A/J$ for a defining ideal $J$ of $A$, and $B$ also equals $A_\Gamma$ in the notation of \S4.)  In fact, the process begins with $B$ and $\Gamma$, and then $A$ and
$\Lambda$ are chosen to guarantee the hypotheses. The verification takes some effort, especially in the
positive characteristic case of \S10. Meaningful results here require at least that $p\geq 2h-2$ and that $p$
is large enough so that the LCF holds in the Jantzen region. Additional requirements on $p$ also appear in
\S10, such as $p\geq 4h-5$ in Theorems \ref{alternativetheorem} and \ref{filteringWeyl}, and even
$p\geq 2N(h-1)-1$, for a certain $N$ with $2\leq N\leq(h-1)|\Phi|$  in Theorem \ref{charpversion}. However, it is expected that all the additional requirements on $p$ can be
eventually removed (leaving only $p\geq 2h-2$ and  $p$ large enough so that the LCF holds in the Jantzen
region).

In each case, once the above program is carried out, the results of \S7 are available for $\gr B$, as 
discussed in the introduction.  

  \section{Quasi-hereditary algebras in the quantum enveloping algebra case}
 Let $\Phi$ be a finite root system, spanning a Euclidean space $\mathbb E$ having
 inner product $(x,y)$. For convenience, assume $\Phi$ is irreducible. For $\alpha\in\Phi$, let
 $\alpha^\vee:=\frac{2}{(\alpha,\alpha)}$.  Let $\mathfrak g$ be the complex simple
  Lie algebra with root system $\Phi$. Choose a set $\Pi=\{\alpha_1,\cdots, \alpha_s\}$ of simple roots, and
  let  $\Phi^+$ the corresponding set of positive roots. For $1\leq i\leq s$,  $\varpi_i\in\mathbb E$ denotes the
 fundamental dominant weight defined by  $(\varpi_i,\alpha_j^\vee)=\delta_{i,j}$, $1\leq j\leq s$; let $X^+={\mathbb N}\varpi_1\oplus\cdots\oplus{\mathbb N}\varpi_s$ be the cone of dominant weights in $X:={\mathbb Z}\varpi_1\oplus\cdots
\oplus {\mathbb Z}\varpi_s$.    Let $\rho=\varpi_1+\cdots+\varpi_s$ and
 $h=(\rho,\alpha_0^\vee)+1$ (Coxeter number), where $\alpha_0\in\Phi^+$ is the maximal short root.
 
 Fix an integer $e>1$. Some restrictions, discussed below, will be made on $e$.
 
  For $\lambda,\mu\in X^+$, define $\lambda\leq \mu\iff
 \mu-\lambda$ is a sum of positive roots; as such, $X^+$ is a poset. If $Y\subseteq X^+$,
 it is regarded as a poset by restricting the relation $\leq$ to it---thus, 
 $Y$ is a subposet of $X^+$. For example,  $\Gamma_{e{\text{\rm -reg}}}=\Reg$ denotes the
 subposet of $X^+$ consisting of $e$-regular dominant weights, i.~e., $\lambda\in\Reg$ if and only if $\lambda\in X^+$ and
 $(\lambda+\rho,\alpha^\vee)\not\equiv 0$ mod$\,e$,  $\forall\alpha\in\Phi$.  Then $\Reg\not=\emptyset\iff
 e\geq h$.    Put $X^+_1=X^+_{1,e}:=\{\lambda\in X^+\,|\,0\leq (\lambda,\alpha^\vee)<e,\,\forall\alpha\in\Pi\,\}$,
 the set  of $e$-restricted dominant weights.  
  
 If $Z\not=\emptyset$ is a subset of a poset $Y$, then $\{y\in Y\,|\,y\leq z,\,\,\text{\rm for some $z\in Z$}\}$ is called the
 poset ideal in $Y$ generated by $Z$. Then $\Gamma_{\text{\rm $e$-res}}=\Res$ is the poset ideal in
 $X^+$ generated by $X^+_1$. 
  Also, 
 let $\Resreg=\Reg\cap \Res$, the poset ideal in $\Reg$ generated by the $e$-regular, $e$-restricted weights. (Caution: $\Res$ does not generally consist entirely of $e$-restricted weights.)

 {\it For the rest of this section, unless otherwise noted, the following assumptions on the integer $e$ will be
 required.}  First, $e>1$ is odd.\footnote{It seems likely that this assumption can be removed. This assumption is only made so that we can apply \cite{AJS}, written before treatments of quantum groups allowing
 even roots of 1. The case when $e=2e'$ with $e'$ odd  can be easily treated, using \cite{A} and the 
 Comparison Theorem 
 \cite{PS-2}, \cite[Thm. 2]{CPS8}.}  We require also that $e> h$.   In addition, in types $B,C$ and $F_4$, assume that $e$ is not divisible by $4$, and in type $G_2$, assume
 that $e$ is not divisible by 3. 
 
    Let $K={\mathbb Q}(\zeta)$ be the cyclotomic field obtained by adjoining a primitive $e$th root of unity
 $\zeta=\sqrt[e]{1}$ to $\mathbb Q$.
 Let ${\mathbb U}={\mathbb U}({\mathfrak g})$ be the  quantum enveloping algebra over the function field ${\mathbb Q}(v)$ associated to $C$.\footnote{For further details on the quantum enveloping algebra, especially at roots of unity used below, including the small quantum group, see \cite{Jan2}, \cite{Lus1}, \cite{Lus2}, \cite{APW1}, \cite{APW2}.} It has generators $E_1,\cdots, E_s, F_1,\cdots, F_s,
 K_1^{\pm 1}, \cdots, K_s^{\pm 1}$ satisfying the familiar relations. Let $U_{\mathbb Z}$ be the ${\mathbb Z}[v,v^{-1}]$-form of $\mathbb U$ generated by
 the ``quantum" divided powers $E_i^{(m)}, F_i^{(m)}$, together with the elements $K_i^{\pm 1}$, $1\leq i\leq s$, $m\in\mathbb N$. Let $U'=K\otimes_{{\mathbb Z}[v,v^{-1}]}U_{\mathbb Z}$, with $K$  viewed as a ${\mathbb Z}[v,v^{-1}]$-algebra by specializing $v\mapsto \zeta$. Finally, let
 $U_\zeta=U'/\langle K_1^e-1,\cdots, K_s^e-1\rangle$ be the quotient of $U'$ by the ideal generated by the central elements $K_i^e-1$. 
 
 There is a (surjective) Frobenius morphism $\Fr:U_\zeta\to U({\mathfrak g})$ from $U_\zeta$ to the
 universal enveloping algebra $U({\mathfrak g})$ of $\mathfrak g$,  whose Hopf-theoretic kernel $u_\zeta$ is the small quantum group (of dimension $e^{\dim({\mathfrak g}})$.  Also, $u_\zeta$ is a normal (Hopf) subalgebra of $U_\zeta$ such that $U_\zeta//u_\zeta\cong U({\mathfrak g})$. If $M$ is $U({\mathfrak g})$-module, let $M^{(1)}$ be the pull-back
 of $M$ through Fr.  

The category $U_\zeta$-mod of finite dimensional, integrable, type 1 $U_\zeta$-modules is a HWC with poset $X^+$ having
 standard (resp., costandard, irreducible) modules denoted $\Delta_\zeta(\lambda)$ (resp., $\nabla_\zeta(\lambda)$, $L_\zeta(\lambda)$), $\lambda\in X^+$.  Our restrictions on $e$ mean that
   for $\lambda\in X^+$, the character $\ch L_\zeta(\lambda)$ is given by the LCF. See \cite[\S7]{T}.\footnote{The length function is explained above Theorem \ref{KoszulforgeneralizedqSchur}.}   For $\lambda\in X^+$,
   then $L(\lambda)^{(1)}\cong L_\zeta(e\lambda)$.

  The set $X^+_1$  of $e$-restricted dominant weights indexes the irreducible $u_\zeta$-modules; in fact, given $\lambda\in X^+_1$, $L_\zeta(\lambda)|_{u_\zeta}$ is the corresponding irreducible $u_\zeta$-module.
  Given $\lambda\in X^+_1$, we let $Q_\zeta(\lambda)\in U_\zeta$-mod be the projective cover
 for $L_\zeta(\lambda)$. As a $u_\zeta$-module, $Q_\zeta(\lambda)$ is the projective cover for $L_\zeta(\lambda)|_{u_\zeta}$ \cite[\S4.6 Theorem]{APW2}. Furthermore,
 $Q_\zeta(\lambda)\cong T_\zeta(2(e-1)\rho+w_0\lambda)$
  is
the indecomposable tilting module in $U_\zeta$-mod of highest weight $2(e-1)\rho+w_0\lambda$, where $w_0\in W$ has maximal length in
 the Weyl group $W$ of $\Phi$.

 The following standard result follows easily from the tensor product theorem. (For
 the $\Leftarrow$ direction, argue on $u_\zeta$-isotypic components as in \cite[I, (2.14)(3)]{JanB}.)

 \begin{lem}\label{completelyreducible} Any $M\in U_\zeta$-mod is completely reducible if and only if 
 its restriction $M|_{u_\zeta}$ to $u_\zeta$ is completely reducible.\end{lem}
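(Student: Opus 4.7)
The plan is to handle the two directions separately, with the forward direction being routine and the reverse direction carried out via a $u_\zeta$-isotypic decomposition followed by an appeal to Weyl's complete reducibility theorem for $U(\fg)$.

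For the ($\Rightarrow$) direction I would invoke the quantum Steinberg tensor product theorem. Every irreducible $M\cong L_\zeta(\lambda)$ with $\lambda=\lambda_0+e\lambda_1$, $\lambda_0\in X^+_1$, decomposes as $L_\zeta(\lambda_0)\otimes L_\zeta(\lambda_1)^{[F]}$, where the second factor is pulled back through the Frobenius-like quotient $U_\zeta\twoheadrightarrow U_\zeta//u_\zeta\cong U(\fg)$. Since $u_\zeta$ acts trivially on the Frobenius twist, $L_\zeta(\lambda)|_{u_\zeta}$ is a direct sum of $\dim L_\zeta(\lambda_1)$ copies of the irreducible $u_\zeta$-module $L_\zeta(\lambda_0)$, and complete reducibility passes through direct sums.

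For the ($\Leftarrow$) direction, suppose $M|_{u_\zeta}$ is completely reducible. For each $\lambda\in X^+_1$, let $M_\lambda$ be the $L_\zeta(\lambda)$-isotypic component of $M|_{u_\zeta}$. Because $u_\zeta$ is a normal Hopf subalgebra of $U_\zeta$, the isomorphism class of the $u_\zeta$-module obtained by twisting an irreducible $u_\zeta$-module by an element of $U_\zeta$ is preserved (this is exactly the argument in Jantzen, I.2.14(3)); hence $M_\lambda$ is a $U_\zeta$-submodule and $M=\bigoplus_{\lambda\in X^+_1}M_\lambda$ as $U_\zeta$-modules. It therefore suffices to show each $M_\lambda$ is a completely reducible $U_\zeta$-module.

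Fix $\lambda\in X^+_1$ and set $N:=\Hom_{u_\zeta}(L_\zeta(\lambda),M_\lambda)$. Standard Hopf-algebra nonsense endows $N$ with a $U_\zeta$-module structure on which $u_\zeta$ acts trivially, i.e., $N$ is naturally a $U_\zeta//u_\zeta\cong U(\fg)$-module, and the canonical evaluation map $L_\zeta(\lambda)\otimes N\overset{\sim}\longrightarrow M_\lambda$ is an isomorphism of $U_\zeta$-modules. By Weyl's theorem, the finite dimensional $U(\fg)$-module $N$ is completely reducible, say $N\cong\bigoplus_j L(\mu_j)$; pulling back through Frobenius and applying the tensor product theorem in reverse shows each summand $L_\zeta(\lambda)\otimes L(\mu_j)^{[F]}$ is irreducible over $U_\zeta$, so $M_\lambda$ is completely reducible. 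Summing over $\lambda$ finishes the proof. The main obstacle is verifying that the $u_\zeta$-isotypic decomposition really is $U_\zeta$-stable; this is where the normality of $u_\zeta$ as a Hopf subalgebra (already recorded in the setup) does the essential work, and it is exactly what the cited Jantzen reference handles in the classical group-scheme setting.
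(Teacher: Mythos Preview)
Your proposal is correct and follows exactly the approach the paper indicates: the tensor product theorem for the forward direction, and the $u_\zeta$-isotypic decomposition (together with Weyl's theorem for $U(\fg)$) for the reverse direction, as in Jantzen I.(2.14)(3). The paper gives only that one-line sketch, and your write-up is a faithful expansion of it; the one place to be slightly careful is that ``twisting by an element of $U_\zeta$'' is not literally meaningful for non-group-like elements, but the $U_\zeta$-stability of each $M_\lambda$ follows cleanly from the $U_\zeta$-equivariance of the evaluation map $L_\zeta(\lambda)\otimes\Hom_{u_\zeta}(L_\zeta(\lambda),M)\to M$ that you already use.
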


  Suppose $\Gamma$ is a finite non-empty poset ideal in $\Reg$, and let $J_\Gamma$ be the annihilator in $U_\zeta$ of the full subcategory $U_\zeta{\text{\rm -mod}}[\Gamma]$ of $U_\zeta$-mod. 
 Put 
 \begin{equation}\label{Asubgamma} A:=U_\Gamma:=U_\zeta/J_\Gamma\end{equation}
  and let $\fa$ be the image of $u_\zeta$ in $A$. The algebra $U_\zeta$ has an anti-involution $\iota$ (stabilizing $u_\zeta$) such that, for $\lambda\in X^+$,
 $L_\zeta(\lambda)^{*\iota}\cong L_\zeta(\lambda^\star)$, where $\lambda^\star:=-w_0\lambda$.  Composing
 $\iota$ with an appropriate graph automorphism yields an anti-involution $\iota'$ which induces   a duality $\mathfrak d$ on
 the category $\Amod$.  There is an induced duality  on $\gr A$-mod 
 and for $\gr A$-grmod. (In the latter case, a module of pure grade $n$ has a dual in pure grade $-n$.)

 The standard and costandard modules for a QHA algebra $A$ are determined by its weight poset
 of $A$.\footnote{We have stated this result for a finite ideal of $e$-regular weights, but, in fact, it remains valid
 for any finite ideal in the larger poset $X^+$. Since most results are established for regular weights, it seems simplest to concentrate on this case.}  For details on the following result, see \cite[Prop. 3.5, Cor. 2.5, Prop. 2.1.2]{DS}.

 \begin{lem}\label{DSlemma}  For any finite non-empty poset ideal $\Gamma$ in $\Reg$, $A=U_\Gamma$ is a QHA (hence, finite dimensional) satisfying $\Amod
 \cong U_\zeta{\text{\rm -mod}}[\Gamma]$. The standard, costandard, irreducible modules in the HWC 
 $\Amod$ (with poset
 $\Gamma$) are the $\Delta_\zeta(\lambda)$, $\nabla_\zeta(\lambda)$, $L_\zeta(\lambda)$, $\lambda\in\Gamma$.\end{lem}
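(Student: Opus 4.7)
The plan is to deduce the lemma from the general machinery for truncating a locally-finite highest weight category to a poset ideal, with $U_\zeta\text{-mod}$ playing the role of the ambient HWC with poset $X^+$. Three claims need to be checked: (a) that $A$ is finite dimensional with $\Amod \cong U_\zeta\text{-mod}[\Gamma]$; (b) that $A$ is quasi-hereditary with weight poset $\Gamma$; and (c) that the standard, costandard, irreducible modules in this HWC are indexed by $\Gamma$ in the stated way.

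The key construction is a finite-dimensional projective generator for $U_\zeta\text{-mod}[\Gamma]$. For each $\lambda\in\Gamma$, I would build $P_\Gamma(\lambda)$ as the projective cover of $L_\zeta(\lambda)$ in the truncated category, obtained by truncating a suitable cover in $U_\zeta\text{-mod}$ (via $\iota'$-duality, one dualizes a large enough injective $Q_\zeta(\mu)$, equivalently an indecomposable tilting module $T_\zeta(2(e-1)\rho+w_0\mu)$, and then kills composition factors outside $\Gamma$). The point is that the resulting $P_\Gamma(\lambda)$ inherits a filtration by standard modules $\Delta_\zeta(\mu)$ with $\mu\in\Gamma$ and $\mu\geq\lambda$, topped by $\Delta_\zeta(\lambda)$; finiteness of $\Gamma$ together with finite-dimensionality of each Weyl module forces $\dim P_\Gamma(\lambda) < \infty$. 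Setting $P := \bigoplus_{\lambda\in\Gamma}P_\Gamma(\lambda)$ and $A':=\End_{U_\zeta}(P)^{\text{op}}$, the functor $\Hom_{U_\zeta}(P,-)$ delivers an equivalence $U_\zeta\text{-mod}[\Gamma]\overset{\sim}{\longrightarrow} A'\text{-mod}$. The QHA axioms for $A'$ follow from the $\Delta$-filtration of $P$ (ordered compatibly with the poset structure on $\Gamma$) together with the Ext-orthogonality (\ref{elemordering}) inherited from the ambient category, which forces the standard/costandard/irreducible labels in $A'\text{-mod}$ to match $\Delta_\zeta(\lambda),\nabla_\zeta(\lambda),L_\zeta(\lambda)$ for $\lambda\in\Gamma$.

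To identify $A'$ with $A=U_\zeta/J_\Gamma$, observe that $J_\Gamma$ acts as zero on every object of $U_\zeta\text{-mod}[\Gamma]$, hence on $P$, so there is a natural surjection $A\twoheadrightarrow A'$. Conversely, the kernel of the composite $U_\zeta\to\End_{U_\zeta}(P)^{\text{op}}$ annihilates every object of $U_\zeta\text{-mod}[\Gamma]$ (since $P$ is a projective generator, so every such object is a quotient of a direct sum of copies of $P$), and thus lies in $J_\Gamma$. This forces $A\overset\sim\to A'$, establishing (a), and by transport the remaining assertions follow.

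The main obstacle is the construction of $P_\Gamma(\lambda)$ with the promised $\Delta$-filtration, since $U_\zeta$ is infinite-dimensional and $L_\zeta(\lambda)$ has no projective cover in $U_\zeta\text{-mod}$ unless $\lambda$ is $e$-restricted. This is exactly the content of \cite[Prop.~3.5]{DS}: one exploits the $\mathfrak d$-dual injective envelopes (equivalently tilting modules) together with the weight-truncation functors, and iteratively kills composition factors outside $\Gamma$ while preserving a $\nabla$-filtration on the dual side, in order to verify that the resulting finite-dimensional module has the correct $\Delta$-filtration when one returns via $\mathfrak d$. Once this is in hand, the remainder of the argument is formal Morita-theoretic bookkeeping using the abstract characterization of a QHA via $\Delta$-filtered projective generators.
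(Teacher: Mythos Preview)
The paper gives no proof of this lemma; it simply cites \cite[Prop.~3.5, Cor.~2.5, Prop.~2.1.2]{DS}. Your sketch is essentially the argument of that reference, so in spirit you are on the right track. However, there is a genuine gap in your identification of $A$ with $A'$.

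You set $A'=\End_{U_\zeta}(P)^{\text{op}}$ and then assert a ``natural surjection $A\twoheadrightarrow A'$'' coming from the action of $U_\zeta$ on $P$. There is no such map. The action of $a\in U_\zeta$ on $P$ gives an element of $\End_K(P)$, not of $\End_{U_\zeta}(P)^{\text{op}}$; left multiplication by $a$ is $U_\zeta$-linear only when $a$ is central. In fact $A=U_\zeta/J_\Gamma$ and $A'$ are in general \emph{not} isomorphic as algebras, only Morita equivalent: for instance when $\Gamma=\Gamma_{n,r}$ the algebra $A$ is the $q$-Schur algebra $S_q(n,r)$, which is typically not basic, whereas your $A'$ (one copy of each $P_\Gamma(\lambda)$) is basic.

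The repair is easy and does not require any isomorphism $A\cong A'$. Since $P$ generates $U_\zeta\text{-mod}[\Gamma]$, the annihilator of $P$ equals $J_\Gamma$, so $A$ embeds in $\End_K(P)$ and is finite dimensional. As a left $A$-module, $\End_K(P)\cong P^{\oplus\dim P}$, so $A\hookrightarrow P^{\oplus\dim P}$ and every irreducible $A$-module already occurs as a composition factor of $P$, hence is some $L_\zeta(\lambda)$ with $\lambda\in\Gamma$. This gives $A\text{-mod}\subseteq U_\zeta\text{-mod}[\Gamma]$; the reverse inclusion is obvious. Now $A\text{-mod}\cong U_\zeta\text{-mod}[\Gamma]\cong A'\text{-mod}$, so $A$ is Morita equivalent to the QHA $A'$ and is therefore itself a QHA with the stated standard, costandard, and irreducible modules.
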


 A block $E$ in the finite dimensional algebra $u_\zeta$ is called $e$-regular if its irreducible modules
 are of the form $L_\zeta(\lambda)|_{u_\zeta}$ for $\lambda$ an $e$-restricted and $e$-regular dominant
 weight. 
  Let
 \begin{equation}\label{regularsubalgebra}
 u'_\zeta =\prod_{E\,e-{\text{\rm regular}}} E,\quad\text{\rm
a direct factor of $u_\zeta$}.
\end{equation}

 A finite non-empty poset ideal $\Gamma$ in $\Reg$ is called {\it $e$-fat} if, given any $e$-regular, $e$-restricted 
 dominant weight $\lambda$,  $2(e-1)\rho+ w_0(\lambda)\in\Gamma$. The reader can verify that,
 if $\Gamma$ is $e$-fat, then it contains all $e$-dominant, $e$-restricted weights. Moreover,  for
 such a $\lambda$, the $U_\zeta$-projective cover
   $Q_\zeta(\lambda)$ of $L_\zeta(\lambda)$ belongs
  to $U_\zeta{\text{\rm -mod}}[\Gamma]$, the full subcategory of $U_\zeta$-mod with modules
 whose composition factors $L_\zeta(\gamma)$ satisfy $\gamma\in\Gamma$. This is because
 $2(e-1)\rho + w_0(\lambda)$ is the highest weight of the $U_\zeta$-module $Q_\zeta(\lambda)$.

 \begin{lem}\label{condition} Let $\Gamma$ be an $e$-fat poset ideal in $\Reg$  and let $A:=U_\Gamma$. Then under the quotient map $\phi:U_\zeta\twoheadrightarrow A$, the algebra $u'_\zeta$ maps
 isomorphically onto a normal subalgebra $\fa$ of $A$. 
 
 Furthermore, $\fa$ is a Koszul algebra satisfying $(\rad\fa)A
 =\rad A$.\end{lem}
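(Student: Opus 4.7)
The proof falls naturally into four separate assertions, and I would take them in order. My first task is to show that the restriction of $\phi$ to $u'_\zeta$ is injective. The key observation is that, under the $e$-fat hypothesis on $\Gamma$, the $U_\zeta$-module $Q_\zeta(\lambda)$ lies in $U_\zeta\text{-mod}[\Gamma] \cong \Amod$ for every $\lambda \in \Resreg \cap X^+_1$. Indeed, $Q_\zeta(\lambda) \cong T_\zeta(2(e-1)\rho + w_0\lambda)$ is a tilting module whose highest weight, a short congruence computation shows, is $e$-regular precisely because $\lambda^\star \in \Reg$; and this highest weight lies in $\Gamma$ by repackaging the $e$-fat inclusion. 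Since $\Gamma$ is a poset ideal, every composition factor $L_\zeta(\mu)$ of $Q_\zeta(\lambda)$ then has $\mu \in \Gamma$, so $Q_\zeta(\lambda) \in \Amod$. The direct sum $M := \bigoplus_{\lambda \in \Resreg \cap X^+_1} Q_\zeta(\lambda)|_{u_\zeta}$ is a projective generator of $u'_\zeta\text{-mod}$, and therefore $u'_\zeta$ acts faithfully on $M$. Because this action factors through $A$, the map $u'_\zeta \to A$ is injective, and I let $\fa$ denote its image.

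The normality of $\fa$ in $A$ follows by transporting the normality of $u_\zeta$ in $U_\zeta$. By the tensor product theorem, any composition factor $L_\zeta(\gamma)$ of a module in $\Amod$ (so $\gamma \in \Reg$) restricts to $u_\zeta$ as a multiple of $L_\zeta(\gamma_0)|_{u_\zeta}$, where $\gamma_0 \in X^+_1$ is $e$-regular (since $\langle \gamma + \rho, \alpha^\vee\rangle \equiv \langle \gamma_0 + \rho, \alpha^\vee\rangle \bmod e$); such $\gamma_0$ lies in an $e$-regular block, so the complementary direct factor $u''_\zeta$ annihilates every module in $\Amod$. Hence $\phi(u''_\zeta) = 0$, and $\phi$ factors through $u_\zeta \twoheadrightarrow u'_\zeta \cong \fa$. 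Applying $\phi$ to the normality identity $(u_\zeta)_+ U_\zeta = U_\zeta (u_\zeta)_+$, and using $\phi(u''_\zeta) = 0$, yields the desired $\fa_+ A = A \fa_+$.

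For the radical equality I would argue both inclusions. Every irreducible $L_\zeta(\gamma) \in \Amod$ is $u_\zeta$-semisimple by the tensor product theorem (it restricts to a multiple of a simple $u_\zeta$-module), so $\rad \fa$ kills it; this gives $(\rad \fa)A \subseteq \rad A$. Conversely, any $A/(\rad \fa)A$-module is $\fa$-semisimple, hence $u_\zeta$-semisimple, and so by Lemma \ref{completelyreducible} it is $U_\zeta$-semisimple and therefore $A$-semisimple. Thus $A/(\rad \fa) A$ has only semisimple modules and is itself semisimple, forcing $\rad A \subseteq (\rad \fa) A$.

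The Koszulity of $\fa$ is the deep input from \cite{AJS} (with the more recent account of Riche \cite{Riche}): under our standing conditions on $e$, the $e$-regular blocks of $u_\zeta$ admit a Koszul grading, and $\fa \cong u'_\zeta$ is the direct sum of those blocks. This is the main obstacle: the first three assertions are essentially formal consequences of the Hopf-algebra setup together with the tensor product theorem, whereas Koszulity is imported from substantial geometric and combinatorial machinery and is not something I would attempt to reprove here.
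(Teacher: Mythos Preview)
Your argument is correct, and for injectivity, normality, and Koszulity it follows the paper's proof almost verbatim (with a bit more detail supplied, e.g., on why $u''_\zeta$ is killed by $\phi$).

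The genuine difference is in the proof of $(\rad\fa)A=\rad A$. The paper argues constructively on projective indecomposables: each PIM of $A$ is the $\Gamma$-truncation of $Q_\zeta(\lambda_0)\otimes L(\lambda_1)^{(1)}$, and one checks directly that the $U_\zeta$-radical and the $u_\zeta$-radical of this tensor product coincide, then passes to the quotient. Your route is more conceptual: you use the tensor product theorem to see that irreducibles in $\Amod$ are $\fa$-semisimple (giving one inclusion), and then invoke Lemma~\ref{completelyreducible} to deduce that the left $A$-module $A/(\rad\fa)A$ is $A$-semisimple (giving the other). Your approach is cleaner and makes the role of Lemma~\ref{completelyreducible} explicit; the paper's approach has the advantage of giving the concrete shape of the PIMs, which is used elsewhere.

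One small phrasing issue: you speak of ``$A/(\rad\fa)A$-modules'', which tacitly assumes $(\rad\fa)A$ is a two-sided ideal --- not obvious a priori. The argument is fine if you simply apply it to the left $A$-module $M=A/(\rad\fa)A$: then $(\rad\fa)M=0$, so $M$ is $\fa$-semisimple, hence (since $u''_\zeta$ kills $\Amod$) $u_\zeta$-semisimple, hence by Lemma~\ref{completelyreducible} $U_\zeta$-semisimple, hence $A$-semisimple, giving $(\rad A)M=0$ and thus $\rad A\subseteq(\rad\fa)A$.
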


 \begin{proof}Since $\Gamma$ is $e$-fat, for any $\lambda\in\Resreg$,  we have $Q_\zeta(\lambda)\in\Amod$. Thus, $u'_\zeta$ acts faithfully on
 the category $\Amod$, so that $u'_\zeta\cap J_\Gamma=0$. Hence, $\phi$ maps $u'_\zeta$ isomorphically onto its image $\fa$ in $A$, which must necessarily
 be a subalgebra (i.~e., it contains the identity of $A$). Since the LCF holds for $U_\zeta$,
\cite[Prop. 18.17, p. 256]{AJS} shows that the algebra $\fa\cong u'_\zeta$ is Koszul.  In addition,
$\fa$ is the image of the normal subalgebra $u_\zeta$ under the map $U_\zeta\to A$, so that 
$\fa$ is normal in $A$.

It remains to prove that $(\rad \fa)A=\rad A$. For this, it is enough to check that
$(\rad \fa )P=\rad P$, for each projective indecomposable module $P\in\Amod$. However, such a PIM
$P$ can be constructed as the largest $A$-module quotient of a $U_\zeta$-module
$Q_\zeta(\lambda)=Q_\zeta(\lambda_0)\otimes L(\lambda_1)^{(1)}$, where $\lambda=\lambda_0+e\lambda_1
\in\Gamma$, with $\lambda_0\in X^+_1$, $\lambda_1\in X^+$.   But $\rad_{U_\zeta}Q_\zeta(\lambda)=(\rad_{U_\zeta}Q_\zeta(\lambda_1))\otimes L(\lambda_1)^{(1)}=\rad_{u_\zeta}(Q_\zeta(\lambda))$, and the desired equation
follows by passing to the homomorphic image $P$. We use here that $Q_\zeta(\lambda_0)$ is also a PIM
in the category of $u_\zeta$-modules.
 \end{proof}

The equation $(\rad\fa)A=\rad A$ at the end of the lemma does not require that
$\Gamma$ be $e$-fat, since radicals are preserved under homomorphic images. It will be useful, both
in this section and in \S10 (with $e=p$),
to formulate a ``fattening" procedure. If $\xi\in X^+$, write $\xi=\xi_0+e\xi_1$, for $\xi_0\in X^+_1$ and $\xi_1\in X^+$. Put 
\begin{equation}\label{definitionoff} {\mathfrak f}_e(\xi)=2(e-1)\rho+w_0\xi_0+e\xi_1\in X^+.
\end{equation}
Thus, ${\mathfrak f}_e(\xi)$
is the highest weight of the $U_\zeta$-module $Q_\zeta(\xi):=Q_\zeta(\xi_0)\otimes L(\xi_1)^{(1)}$.
For $\emptyset\not=\Psi\subseteq X^+$, let $\Psi(-1)$ be the poset ideal in $X^+$  generated by $\Psi$, and, for $n\geq 0$, define $\Psi(n)$ be the poset ideal in $X^+$ generated by all
${\mathfrak f}_e(\xi)$, with $\xi\in\Psi(n-1)$.\footnote{If $\xi$ is $e$-regular, then so is ${\mathfrak f}_e(\xi)$.
If $\Psi\subseteq \Reg$, then take $\Psi(n)$, $n\geq -1$, to be the poset ideal in $\Reg$ generated by all
${\mathfrak f}_e(\xi)$ for $\xi\in\Psi(n-1)$.}
   In particular, if $\xi\in\Psi$, then the composition factors
of $Q_\zeta(\xi)$ all have highest weights in $\Psi(0)$. More generally, if $M$ is a $U_\zeta$-module
all of whose composition factors $L_\zeta(\xi)$ satisfy $\xi\in\Psi$, then there is, for any non-negative
integer $n$, an exact sequence
\begin{equation}\label{neededresolution}
0\to\Omega_n\to P_{n-1}\to\cdots \to P_0\to M\to 0\end{equation}
of $U_\zeta$-modules, in which all the modules $P_i$ are projective $U_\zeta$-modules and
all 
composition factors of $P_i$ have highest weights in $\Psi(i)\subseteq\Psi(n-1)$. 

Observe that the $P_i|_{u_\zeta}$, $i=0,\cdots, n-1$, are projective modules in $u_\zeta$-mod.

Let $W_e=W\ltimes e{\mathbb Z}\Phi$ be the affine Weyl group generated by
the Weyl group $W$ and the normal subgroup of translations by $e$-multiples of roots. 
Given $\lambda \in X$, there exists a unique $\lambda^-\in \overline{C^-}$ (the closure of $C^-$) such that $\lambda=w\cdot\lambda^-:=w(\lambda^-+\rho)-\rho$ for some
$w\in W_e$.  Among all possible $w\in W_e$ satisfying $w\cdot\lambda^-=\lambda$, let $w_\lambda$ have
minimal length. Thus, if $W_{e,\lambda^-}$ is the stabilizer in $W_e$ of $\lambda^-$, $w_\lambda$ is a minimal
left coset representative of the parabolic subgroup $W_{e,\lambda^-}$ in $W_e$. Also, define
$l:X\to \mathbb Z$ by putting $l(\lambda):=
l(w_\lambda)$. Given dominant weights
$\lambda,\nu$, put $P_{\nu,\lambda}=0$ unless $\lambda^-=\nu^-$. If $\lambda^-=\nu^-$, set $P_{\nu,\lambda}=
P_{w_\nu,w_\lambda}$, the Kazhdan-Lusztig polynomial for the elements $w_\nu,w_\lambda$ in the Coxeter
group $W_e$. Also, 
define $Q_{\nu,\lambda}=Q_{w_\nu,w_\lambda}$ if $\lambda^-=\nu^-$, and $=0$ if $\lambda^-\not=\nu^-$.
Here,  $Q_{x,y}\in{\mathbb Z}[t^2]$ is the inverse Kazhdan-Lusztig polynomial associated to the
pair $(x,y)$.

 The theorem below concerns the category of $U_\zeta$-modules which involve only $e$-regular dominant
 weights.  If
 $\Gamma$ is a finite non-empty poset ideal in $\Reg$, the $P_{\lambda,\nu}\in{\mathbb Z}[t^2]\subseteq{\mathbb Z}[t]$ are the Kazhdan-Lusztig polynomials of the HWC $B=U_\Gamma$.  Because the LCF holds for all $\lambda\in\Gamma$, the HWC $B$-mod satisfied the
 (KL) property with
 respect to $l:\Gamma\to\mathbb Z$, $\lambda\mapsto l(w_\lambda)$; see \cite[Rem. 5.8]{CPS1}. In other words, the $i=0$ part of condition (SKL$^\prime$) given
 in \S4 holds.

\begin{thm}\label{KoszulforgeneralizedqSchur}  Let $\Gamma$ be a finite non-empty poset ideal in $\Reg$ and let $B=U_\Gamma$.
 The category of ungraded modules for the graded algebra $\gr B$ is a HWC with poset $\Gamma$ and a
 duality $\mathfrak d$, which also induces a duality on $\gr B$-grmod. 
 The category $\gr B$-grmod  satisfies the graded Kazhdan-Lusztig property with the same Kazhdan-Lusztig polynomials as $\Bmod$ (which
 are, in turn classical Kazhdan-Lusztig polynomials of $W_e$.)
In particular, $\gr B$ is a QHA with a Koszul grading. For $\lambda\in\Gamma$, the associated standard module is $\gr\Delta_\zeta(\lambda)$, which  is linear as a graded $\gr B$-module.  
\end{thm}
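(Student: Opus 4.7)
The plan is to cast $B$ as $A_\Gamma$ for a suitably chosen ambient QHA $A$ containing a Koszul subalgebra $\fa$, then invoke Theorem \ref{Koszultheorem} together with Theorem \ref{SKLtheorem} and Corollaries \ref{grcorollary}, \ref{koszuldual}. The duality $\mathfrak d$ on $\Bmod$ is induced by the anti-involution $\iota'$ of $U_\zeta$ discussed above Lemma \ref{DSlemma}, and descends to $\gr B$-mod as well as to $\gr B$-grmod (with the standard sign convention on the shift). The (KL) property for $\Bmod$ relative to the length function $l$ is immediate from the validity of the LCF for $U_\zeta$-mod, itself a consequence of our hypotheses on $e$; see Remark \ref{LCF}.

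Let $N$ be the global dimension of $B$, which is finite since $B$ is a QHA. Let $\Gamma_0$ be the poset ideal in $\Reg$ generated by $\Gamma \cup (\Resreg + (e-1)\rho)$; this is a finite, $e$-fat poset ideal containing $\Gamma$. Set $\Gamma^* := \Gamma_0(N)$, the $N$-fold iterated $\mathfrak f_e$-fattening of $\Gamma_0$ introduced after Lemma \ref{condition}. Since $\xi \leq \mathfrak f_e(\xi)$ for every $\xi$, the fattening is monotonic, so $\Gamma^*$ remains $e$-fat; and since $\mathfrak f_e$ preserves $e$-regularity, $\Gamma^*$ remains in $\Reg$. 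Put $A := U_{\Gamma^*}$ and let $\fa$ be the image of the $e$-regular direct factor $u'_\zeta$ under $U_\zeta \twoheadrightarrow A$. Lemma \ref{condition} then gives that $\fa$ is Koszul (hence tightly graded), is a normal subalgebra of $A$, and satisfies $(\rad \fa)A = \rad A$. The remainder of Hypothesis \ref{hypothesisofsection6} is straightforward: the Hopf-algebra setup (\ref{compatable}) is supplied by $(U_\zeta, u_\zeta)$, and condition (4) on the Wedderburn complement holds automatically in characteristic $0$. Finally, $B = A_\Gamma$ since $\Gamma \subseteq \Gamma^*$.

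For Hypothesis \ref{section7hypothesis}(2), fix $\nu \in \Gamma$ and apply (\ref{neededresolution}) to $M = L_\zeta(\nu)$ with $\Psi = \Gamma_0$ and $n = N$ to obtain an exact sequence
$$0 \to \Omega_N \to P_{N-1} \to \cdots \to P_0 \to L_\zeta(\nu) \to 0$$
of $U_\zeta$-modules in which each $P_i$ is a projective $U_\zeta$-module whose composition factors all lie in $\Gamma_0(i) \subseteq \Gamma^*$. Consequently each $P_i$ factors through $U_\zeta \twoheadrightarrow A$ with no truncation and remains projective in $\Amod$. For $\fa$-projectivity, each $U_\zeta$-PIM $Q_\zeta(\lambda) = Q_\zeta(\lambda_0) \otimes L_\zeta(\lambda_1)^{(1)}$ restricts to a projective $u_\zeta$-module via the tensor product theorem; moreover, since every composition factor $L_\zeta(\lambda)$ of $P_i$ has $\lambda \in \Reg$, and $\lambda$ is $e$-regular iff $\lambda_0 \in X^+_1$ is, every $u_\zeta$-isotypic component of $P_i|_{u_\zeta}$ is supported on the $e$-regular direct factor $u'_\zeta$. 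Therefore $P_i|_{u'_\zeta}$ is projective, and identifying $u'_\zeta \cong \fa$ via Lemma \ref{condition}, $P_i|_\fa$ is projective as required. The $i=0$ case simultaneously handles the projective cover of $L_\zeta(\nu)$ in $\Amod$.

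With both hypotheses in place, Theorem \ref{Koszultheorem} yields (SKL$'$) for $B$ relative to $l$, and Theorem \ref{SKLtheorem} combined with Corollaries \ref{grcorollary} and \ref{koszuldual} then delivers all remaining conclusions, including the identification of the Kazhdan-Lusztig polynomials of $\gr B$ with the classical $P_{w_\nu, w_\lambda}$ of $W_e$ via the discussion preceding the theorem. The central obstacle is the trilateral constraint on $\Gamma^*$: it must be $e$-fat (to apply Lemma \ref{condition}), must lie inside $\Reg$ (so that the linkage principle forces every $u_\zeta$-composition factor onto $u'_\zeta$, yielding the $\fa$-projectivity), and must be large enough to absorb $N$ steps of a minimal $U_\zeta$-resolution of $L_\zeta(\nu)$ for each $\nu \in \Gamma$. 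The iterated fattening $\Gamma_0(N)$ is engineered precisely so that all three conditions hold simultaneously.
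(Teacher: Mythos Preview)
Your proof is correct and follows essentially the same route as the paper: enlarge $\Gamma$ to a suitable $\Lambda$, set $A=U_\Lambda$ with $\fa$ the image of $u'_\zeta$, verify Hypothesis~\ref{section7hypothesis} using Lemma~\ref{condition} and the resolution (\ref{neededresolution}), then apply Theorem~\ref{Koszultheorem} and Corollary~\ref{grcorollary}. The paper takes $\Lambda=\Gamma(N-1)$ directly and asserts $e$-fatness; your variant of first passing to an $e$-fat $\Gamma_0\supseteq\Gamma$ before iterating is a harmless (and slightly more explicit) way to guarantee that Lemma~\ref{condition} applies, and your added detail on why each $P_i$ lands in the $u'_\zeta$-summand is a useful clarification the paper leaves implicit.
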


\begin{proof} We will apply Theorem \ref{Koszultheorem}. Let $N$ be the global dimension of $B$.
We can assume that $N>0$, and we set $\Lambda:=\Gamma(N-1)\cup\Resreg(0)$ and $A:=U_\Lambda$. Let $\fa$
be the image of $u'_\zeta$ in $A$. We must
check that Hypothesis \ref{section7hypothesis} holds for the QHAs $A$ and $B$, and the
subalgebra $\fa$ of $A$.

By construction, $\Lambda$ is $e$-fat. Then by Lemma \ref{condition} (with $\Lambda$ here playing the role
of $\Gamma$ there) says that  $u'_\zeta\cong\fa$ is Koszul (thus, tightly graded), and a normal subalgebra of $A$.  (In fact, the map $U_\zeta\twoheadrightarrow A$ maps the Hopf
subalgebra $u_\zeta$ onto $\fa$.)  The lemma also states that $(\rad \fa)A=\rad A$. Thus, $(A,\fa)$
satisfies Hypothesis \ref{hypothesisofsection6}.
 
Therefore, Hypothesis \ref{section7hypothesis}(1) holds. The discussion above for (\ref{neededresolution}) with $n=N-1$ implies that Hypothesis \ref{section7hypothesis}(2) is satisfied. Finally, as observed just
before the statement of the theorem, $\Bmod$ satisfies the (KL) property with with respect to
the length function $l:\Gamma\to\mathbb Z$, $\lambda\mapsto l(\lambda)=l(w_\lambda)$. 

The duality
hypothesis in Theorem \ref{Koszultheorem} is satisfied,  by the discussion above Lemma \ref{DSlemma}. Therefore, 
all the hypotheses of Theorem \ref{Koszultheorem} hold, so it implies that $\Bmod$ satisfies the (SKL$^\prime$) property.
 
  Therefore, the conclusion of the theorem follows from Corollary \ref{grcorollary}. \end{proof}

The theorem implies results on the homological dual $B^!$ and on the composition factors
of the radical sections of the modules $\Delta_\zeta(\lambda)$. These facts are contained in parts (a) and (c) of the
following result. Using translation functors, it is possible to obtain   information in the case of non-$e$-regular
weights---this is the content of part (b). In the result below, the $Q_{\lambda,\nu}$ are the inverse
Kazhdan-Lusztig polynomials discussed right above Theorem \ref{KoszulforgeneralizedqSchur}.

\begin{cor}\label{inverse} (a) Let $B$ be as in the previous theorem and let $B^!=\Ext^\bullet_B(B/\rad B,B/\rad B)$ be its homological dual. Then $B^!$ is a graded QHA having a graded Kazhdan-Lusztig theory
with length function $l^{\text{\rm op}}=-l:\Gamma^{\text{\rm op}}\to\mathbb Z$ and Kazhdan-Lusztig polynomials $F_{\lambda,\nu}:=
Q_{\nu,\lambda}$, for $\nu<\lambda$ in $\Gamma$. In particular, $B^!$ is a Koszul algebra. Also,
$B^!\cong (\gr B)^!$.

(b) Suppose $\lambda\in X^+$ (not necessarily $e$-regular). Then $\Delta_\zeta(\lambda)$ has a $U_\zeta$-filtration
$$\Delta_\zeta(\lambda)=F^0(\lambda)\supseteq F^1(\lambda)\supseteq\cdots\supseteq F^m(\lambda)=0$$
in which each section $F^i(\lambda)/F^{i+1}(\lambda)$, $0\leq i<m$, is a completely reducible $U_\zeta$-module
satisfying the following property: for $n\geq 0$ and $\nu\in X^+$, the multiplicity of $L_\zeta(\nu)$ in the section
$F^n(\lambda)/F^{n+1}(\lambda)$ equals  the coefficient of $t^{l(\lambda)-l(\nu)-n}$ in the polynomial $Q_{\nu,\lambda}$.

(c)
If $\lambda\in 
\Reg$, then in (b) above, we can take $F^n(\lambda)=\rad^n\Delta_\zeta(\lambda)$.

\end{cor}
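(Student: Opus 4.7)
The plan handles the three parts in the order (a), (c), (b), with each of the latter two resting on the previous and on results from Part~I.

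For part (a), I would apply Corollary \ref{koszuldual} directly to the algebra $B=U_\Gamma$, whose hypotheses were verified in the proof of Theorem \ref{KoszulforgeneralizedqSchur}. This immediately delivers the graded quasi-hereditary structure on $B^!$, the graded Kazhdan--Lusztig theory with length function $l^{\text{op}}=-l$ on $\Gamma^{\text{op}}$, the Koszulity of $B^!$, and the isomorphism $B^!\cong(\gr B)^!$. It remains to identify the polynomials $F_{\nu,\lambda}$ of Corollary \ref{koszuldual} with the classical inverse Kazhdan--Lusztig polynomials $Q_{\nu,\lambda}$ of the affine Weyl group $W_e$. For this I would invoke the remark following Corollary \ref{koszuldual} identifying $F_{\nu,\lambda}$ with the inverse of the Kazhdan--Lusztig polynomial of $\Bmod$, and then apply Theorem \ref{KoszulforgeneralizedqSchur} to recognize the latter as the classical $W_e$-polynomial $P_{\nu,\lambda}$.

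For part (c), since $\lambda\in\Reg$, pick any finite poset ideal $\Gamma\subseteq\Reg$ containing $\lambda$ and set $B=U_\Gamma$. By Theorem \ref{KoszulforgeneralizedqSchur}, the graded algebra $\gr B$ has the graded Kazhdan--Lusztig property and $\gr\Delta_\zeta(\lambda)$ is linear; hence the $n$-th graded piece of $\gr\Delta_\zeta(\lambda)$ equals $\rad^n\Delta_\zeta(\lambda)/\rad^{n+1}\Delta_\zeta(\lambda)$ and is completely reducible. The multiplicity $[\rad^n\Delta_\zeta(\lambda)/\rad^{n+1}\Delta_\zeta(\lambda):L_\zeta(\nu)]$ is then computed from graded hom-spaces into $L_\zeta(\nu)(-n)$, and matching against the defining formula for $F_{\nu,\lambda}$ in Corollary \ref{koszuldual}, together with part (a), identifies it with the coefficient of $t^{l(\lambda)-l(\nu)-n}$ in $Q_{\nu,\lambda}$. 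Therefore taking $F^n(\lambda):=\rad^n\Delta_\zeta(\lambda)$ establishes the claim in this case.

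For part (b), where $\lambda\in X^+$ need not be $e$-regular, the plan is to reduce to (c) via translation functors. Write $\lambda=w_\lambda\cdot\lambda^-$ with $\lambda^-\in\overline{C^-}$ and $w_\lambda\in W_e$ minimal, pick a regular weight $\lambda'\in C^-$ in a suitable nearby facet, and set $\mu=w_\lambda\cdot\lambda'\in\Reg$. The translation functor $T=T^\lambda_\mu$ from the regular block to the block of $\lambda$ is exact, satisfies $T\Delta_\zeta(\mu)\cong\Delta_\zeta(\lambda)$, and sends each irreducible $L_\zeta(y\cdot\lambda')$ either to zero or to $L_\zeta(y\cdot\lambda^-)$ according to whether $y$ is minimal in its coset of $W_{e,\lambda^-}$. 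Applying $T$ to the radical filtration of $\Delta_\zeta(\mu)$ obtained in (c) yields a filtration $F^n(\lambda):=T(\rad^n\Delta_\zeta(\mu))$ of $\Delta_\zeta(\lambda)$ with completely reducible sections, and $[F^n/F^{n+1}:L_\zeta(\nu)]$ equals the sum of $[\rad^n\Delta_\zeta(\mu)/\rad^{n+1}\Delta_\zeta(\mu):L_\zeta(\nu')]$ over those $\nu'=y\cdot\lambda'$ with $y\cdot\lambda^-=\nu$ and $y$ minimal in its coset. Each summand is a coefficient of $Q_{\nu',\mu}$ by (c), and the total sum should collapse to the coefficient of $t^{l(\lambda)-l(\nu)-n}$ in $Q_{\nu,\lambda}=Q_{w_\nu,w_\lambda}$.

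The main obstacle lies in this last step of part (b): one must verify that the coset-sum of regular inverse Kazhdan--Lusztig polynomials yields precisely the singular inverse polynomial $Q_{w_\nu,w_\lambda}$, and that length parities are preserved by translation in a manner consistent with the grading. Both are standard in affine Kazhdan--Lusztig combinatorics for $W_e$, but the bookkeeping (minimal coset representatives, length shifts, and parity compatibility across the walls of $\overline{C^-}$) is the delicate part of the argument.
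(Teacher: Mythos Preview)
Your approach to all three parts matches the paper's: (a) and (c) are deduced from Corollary~\ref{koszuldual} and the discussion following it, and (b) is obtained by applying a translation functor from a regular block to the radical filtration produced in (c). The paper takes $\lambda'=-2\rho\in C^-$ specifically, but this is only a convenient choice.

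The obstacle you flag at the end, however, is not real, and seeing why dissolves the ``delicate bookkeeping'' entirely. Your sum over $\nu'=y\cdot\lambda'$ with $y\cdot\lambda^-=\nu$ and $y$ minimal in its $W_{e,\lambda^-}$-coset has exactly one term: these two conditions force $y=w_\nu$, so $\nu'=w_\nu\cdot\lambda'$ is unique. Thus $[F^n/F^{n+1}:L_\zeta(\nu)]$ equals the single multiplicity $[\rad^n\Delta_\zeta(\mu)/\rad^{n+1}\Delta_\zeta(\mu):L_\zeta(w_\nu\cdot\lambda')]$, which by (c) is the coefficient of $t^{l(\mu)-l(w_\nu\cdot\lambda')-n}$ in $Q_{w_\nu\cdot\lambda',\mu}$. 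Now recall the paper's definitions preceding Theorem~\ref{KoszulforgeneralizedqSchur}: for \emph{any} dominant $\lambda,\nu$ with $\lambda^-=\nu^-$, one sets $l(\lambda)=l(w_\lambda)$ and $Q_{\nu,\lambda}=Q_{w_\nu,w_\lambda}$. Since $w_{w_\nu\cdot\lambda'}=w_\nu$ and $w_\mu=w_\lambda$ (the stabilizer of a regular weight is trivial), we get $l(\mu)-l(w_\nu\cdot\lambda')=l(w_\lambda)-l(w_\nu)=l(\lambda)-l(\nu)$ and $Q_{w_\nu\cdot\lambda',\mu}=Q_{w_\nu,w_\lambda}=Q_{\nu,\lambda}$. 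No combinatorial identity among parabolic Kazhdan--Lusztig polynomials is required; the singular $Q_{\nu,\lambda}$ is \emph{defined} to be the regular one at the minimal coset representatives.
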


\begin{proof} Parts (a) and (c) follow from Corollary \ref{koszuldual} and the discussion following it.  Now consider
(b), and suppose that $\lambda^-\in \overline{C^-}\backslash C^-$ is not $e$-regular. Let $T$ be the translation
operator from the category of $U_\zeta$-modules having composition factors with highest weights of the form $w\cdot(-2\rho)\in X^+$ to the category of $U_\zeta$-modules having composition factors with highest weights of the form $w\cdot\lambda^-\in X^+$.
Then $T\Delta_\zeta(w_\lambda\cdot(-2\rho))\cong\Delta_\zeta(\lambda)$. Also, if $\lambda=w\cdot\lambda^-\in X^+$, then $TL(w\cdot(-2\rho))=L(\lambda)$ if and only if $w=w_{\lambda}$, and
equals 0, otherwise.

 Let $F^i(\lambda):=T\rad^i\Delta_\zeta(\lambda)$.
Since $T$ is exact, $F^\bullet(\lambda)$ is a $U_\zeta$-filtration of $\Delta_\zeta(\lambda)$ with completely
reducible sections. In fact, $F^i(\lambda)/F^{i+1}(\lambda)\cong T\rad^i\Delta_\zeta(\lambda)/\rad^{i+1}\Delta_\zeta(\lambda)$. Suppose that $n\geq 0$ and $\nu\in X^+$. To determine the multiplicity of $L_\zeta(\nu)$ in $F^n(\lambda)/F^{n+1}(\lambda)$, we   assume $\nu^-=\lambda^-$ (otherwise, the multiplicity is zero). Then $\nu$ is in the upper $e$-closure
of  $w_{\lambda^-}\cdot C^-$.  In fact, $w_{\lambda^-}\cdot C^-$ is the unique alcove containing $\nu$ in
its upper closure.  Since the multiplicity of $L_\zeta(w_\lambda\cdot(-2\rho))$ in $\rad^n\Delta_\zeta(\lambda)/\rad^{n+1}\Delta_\zeta(\lambda)$ is the coefficient of $t^{l(\lambda)-l(\nu)-b}$ in $Q_{w_\nu,w_\lambda}=Q_{
w_\nu\cdot(-2\rho),w_\lambda\cdot(-2\rho)}$, part (b) is proved.
\end{proof}

\begin{rems} (a) Translating
into the language of quantum enveloping algebras yields a remarkable fact. Let
${\mathbb E}^!:=\bigoplus_{\lambda,\mu\in\Reg}\Ext^\bullet_{U_\zeta}(L_\zeta(\lambda),L_\zeta(\mu))$
be full $\Ext$-algebra of $U_\zeta$ on irreducible modules having regular highest weights. Then ${\mathbb E}^!$
(an algebra without identity) is ``almost Koszul" in the sense that if $e=e_\Gamma$ is the idempotent projection
corresponding to a poset ideal $\Gamma$ in $\Reg$, then $e{\mathbb E}^!e$ is Koszul. 

(b) In \cite{AJS}, it is conjectured that the full small quantum enveloping algebra $u_\zeta$ is Koszul. This conjecture would provide a step toward establishing
Theorem \ref{KoszulforgeneralizedqSchur} (and Corollary \ref{inverse}) for the singular blocks, although our approach would still require  that the singular blocks satisfy a parity condition (have a Kazhdan-Lusztig theory \cite{CPS1}).

(c) In \cite[Conj. 4.1]{Lin0}, Lin conjectured (in the context of algebraic groups in positive characteristic)
an equality of $\Ext^1$-groups between irreducible modules having regular weights and irreducible modules
 having weights on a wall. If this is true in the quantum enveloping algebra setting, it seems likely that translation operators from a ``regular" standard module to a ``singular"
  standard module would preserve radical filtrations. In that case, Corollary \ref{inverse}(c) extends to arbitrary standard modules.
\end{rems}

\begin{thm}\label{gradingresult}Let $\lambda\in \Reg$. There is a positive grading
$\Delta_\zeta(\lambda)=\bigoplus_{n\geq 0}\Delta_\zeta(\lambda)_n$
of $\Delta_\zeta(\lambda)$ as a module for the Koszul algebra $u'_\zeta$. The graded module $\Delta_\zeta(\lambda)$
is generated in degree 0. As a $u'_\zeta/\rad u'_\zeta$-module
$\Delta_\zeta(\lambda)_0\cong L_\zeta(\lambda).$
\end{thm}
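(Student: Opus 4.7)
The strategy is to realize $\Delta_\zeta(\lambda)$ as a standard module for a finite dimensional quotient $A$ of $U_\zeta$ chosen so that $u'_\zeta$ descends isomorphically to a Koszul subalgebra $\fa$ of $A$, then invoke Theorem \ref{MainInfinitesimalThm}. Concretely, I would pick a finite poset ideal $\Gamma \subseteq \Reg$ that is $e$-fat, contains $\lambda$, and also contains every composition-factor highest weight of the indecomposable $U_\zeta$-module $Q_\zeta(\lambda) := Q_\zeta(\lambda_0) \otimes L(\lambda_1)^{(1)}$, where $\lambda = \lambda_0 + e\lambda_1$ with $\lambda_0 \in X^+_1$ and $\lambda_1 \in X^+$. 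Such a $\Gamma$ exists and is finite, because the linkage principle forces every composition factor of $Q_\zeta(\lambda)$ to sit in $\Reg$, and because enlarging the finite set $\Resreg$ by its associated fattening data adds only finitely many weights. Set $A := U_\Gamma$ and let $\fa$ denote the image of $u'_\zeta$ in $A$. By Lemma \ref{condition} (applied to the $e$-fat ideal $\Gamma$), the map $u'_\zeta \to \fa$ is an isomorphism, $\fa$ is Koszul and hence tightly graded, $\fa$ is normal in $A$, and $(\rad\fa)A = \rad A$. Since $K$ has characteristic $0$, all the relevant semisimple quotients are separable, so a Wedderburn complement $A_0 \supseteq \fa_0$ exists, and Hypothesis \ref{hypothesisofsection6} is satisfied for $(A,\fa)$ with the Hopf pair $(U_\zeta,u_\zeta)$.

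To apply Theorem \ref{MainInfinitesimalThm} to $\Delta = \Delta_\zeta(\lambda)$ it remains only to check that the projective cover $P(\lambda)$ of $L_\zeta(\lambda)$ in $\Amod$ is projective on restriction to $\fa$. By the choice of $\Gamma$, every composition factor of $Q_\zeta(\lambda)$ lies in $\Amod$, so $P(\lambda) = Q_\zeta(\lambda)$. The standard tensor decomposition $Q_\zeta(\lambda) \cong Q_\zeta(\lambda_0) \otimes L(\lambda_1)^{(1)}$, together with the fact that $u_\zeta$ acts on the Frobenius twist $L(\lambda_1)^{(1)}$ through its augmentation, yields an isomorphism of $u_\zeta$-modules
\[
Q_\zeta(\lambda)\bigl|_{u_\zeta} \;\cong\; Q_\zeta(\lambda_0)^{\oplus \dim L(\lambda_1)}.
\]
Since $\lambda$ is $e$-regular, so is $\lambda_0$, hence $Q_\zeta(\lambda_0)$ lives entirely in the $e$-regular direct factor $u'_\zeta$ and is projective there. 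Transporting through the isomorphism $u'_\zeta \overset\sim\to \fa$ shows $P(\lambda)|_\fa$ is $\fa$-projective.

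Theorem \ref{MainInfinitesimalThm} now furnishes a non-negative $\fa$-grading $\Delta_\zeta(\lambda) = \bigoplus_{n\geq 0}\Delta_\zeta(\lambda)_n$ generated in grade $0$, with $\Delta_\zeta(\lambda)_0$ stable under $A_0$. Pulling this grading back along $u'_\zeta \overset\sim\to \fa$ gives the asserted positive grading of $\Delta_\zeta(\lambda)$ as a $u'_\zeta$-module, generated in degree $0$. For the identification of $\Delta_\zeta(\lambda)_0$, observe that since $\Delta_\zeta(\lambda)$ is generated in grade $0$ over the tightly graded algebra $\fa$, one has $\Delta_\zeta(\lambda)_0 \cong \Delta_\zeta(\lambda)/\rad_\fa \Delta_\zeta(\lambda)$; using $(\rad\fa)A = \rad A$ this coincides with $\Delta_\zeta(\lambda)/\rad_A\Delta_\zeta(\lambda) \cong L_\zeta(\lambda)$, the head of the standard module. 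The main obstacle is the projective-cover hypothesis: it is what forces the careful enlargement of $\Gamma$, and its verification rests on the tensor-product decomposition of $Q_\zeta(\lambda)$ together with the observation that the Frobenius twist factor is $u_\zeta$-trivial, so that $u'_\zeta$-projectivity reduces to the restricted case $\lambda_0 \in X^+_1 \cap \Reg$, which is built into the very definition of $u'_\zeta$.
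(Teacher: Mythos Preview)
Your proof is correct and follows the same approach as the paper: choose a sufficiently large $e$-fat poset ideal $\Lambda$ in $\Reg$ containing $\lambda$, set $A=U_\Lambda$, and invoke Theorem~\ref{MainInfinitesimalThm}. You are in fact more careful than the paper's one-line proof, since you explicitly enlarge $\Lambda$ to contain all composition factors of $Q_\zeta(\lambda)$ (ensuring $P(\lambda)=Q_\zeta(\lambda)$ is $\fa$-projective) and you spell out why $\Delta_\zeta(\lambda)_0\cong L_\zeta(\lambda)$ via the head identification.
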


\begin{proof} Choose an $e$-fat poset ideal $\Lambda$ in $\Reg$  containing $\lambda$. Then Theorem \ref{MainInfinitesimalThm}  applies.\end{proof}

 Theorem \ref{MainInfinitesimalThm} contains more information. In particular, a Wedderburn complement $A_0$
 for $A=U_\Lambda$  can be chosen, containing $\fa_0$, so that $\Delta_\zeta(\lambda)_0$ is $A_0$-stable.

\section{The Dipper-James $q$-Schur algebras $S_q(n,r)$ in characteristic 0}   Here $n,r$ are positive integers, $q=\zeta^2$ for $\zeta$ a primitive
$e$th root of 1 for an integer $e>n=h$. As in the previous section, we assume $e$ is odd, unless otherwise explicitly noted. Let $\Lambda^+(n,r)$ (resp., $\Lambda(n,r)$) be the set of partitions (resp., compositions) of $r$ with at most $n$ nonzero parts. The dominance order $\unlhd$ on partitions makes $\Lambda^+(n,r)$ into a poset. Let $\Lambda^+_{\text{\rm $e$-reg}}(n,r)$ be the set of $e$-regular partitions in $\Lambda^+(n,r)$,
i.~e., $\lambda\in\Lambda^+(n,r)$ is $e$-regular if and only if no nonzero part of $\lambda$ is repeated $e$ times.\footnote{Also, $\lambda$ is $e$-restricted (or just restricted) if its transpose $\lambda'$ is
$e$-regular.}  A partition $\lambda=(\lambda_1,\cdots,\lambda_n)\in\Lambda^+(n,r)$ is called
chamber $e$-regular provided that, for all $i,j$ satisfying $1\leq i<j\leq n$, it holds that $\lambda_i -i\not\equiv \lambda_j-j$
mod$\,e$. Clearly, the set
$\Lambda^{+,\sharp}(n,r)$ of all chamber $e$-regular partitions is a (usually proper) subset of $\Lambda^+_{{\text{\rm $e$-reg}}}(n,r)$.

Let $\wH_r$ be the Hecke algebra over ${\mathbb Z}[\q,\q^{-1}]$ for the
Coxeter group $W={\mathfrak S}_r$ (symmetric group of degree $r$) with fundamental reflections $S=\{s_1,\cdots, s_{r-1}\}$, $s_i=(i,i+1)$. Here $\q$ is an indeterminate.
Each composition $\lambda\in\Lambda(n,r)$ defines a (right) $\q$-permutation module $x_\lambda \wH_r$, and the $\q$-Schur algebra over ${\mathbb Z}[\q,\q^{-1}]$
is the endomorphism algebra
\begin{equation}\label{schur}
\widetilde S_\q(n,r):=\End_{\wH_r}\left(\bigoplus_{\lambda\in\Lambda(n,r)}x_\lambda\wH_r\right).\end{equation}
(See \cite{DPS1} and \cite{DDPW} for more details.) The space $\bigoplus_{\lambda\in\Lambda(n,r)}x_\lambda\wH_r$ identifies with $V^{\otimes r}$ ($V$ a free ${\mathbb Z}[\q,\q^{-1}]$-module of rank $n$) which
has a natural right $\wH_r$-action.
Definition (\ref{schur}), due to Dipper and James \cite{DJ}, behaves well with respect to base change to any commutative ring $\sR$ over ${\mathbb Z}[\q,\q^{-1}]$. In particular, we can take $\sR=K={\mathbb Q}(\zeta)$, specializing $\q\mapsto \zeta^2$ and put $S_q(n,r):=\widetilde S_\q(n,r)\otimes_{{\mathbb Z}[\q,\q^{-1}]}K$. There is a surjective homomorphism $U_\zeta({\mathfrak{gl}}_n)\twoheadrightarrow S_q(n,r)$ \cite[Thm. 14.24]{DDPW} (see also \cite[Thm. 6.3]{DPS1} and \cite{Du}). Here $U_\zeta({\mathfrak{gl}}_n)$ is the quantum enveloping algebra over ${\mathbb Q}(\zeta)$
corresponding to the general linear Lie algebra ${\mathfrak{gl}}_n$; the action of $U_\zeta({\mathfrak{gl}}_n)$ on ``quantum tensor space" $V^{\otimes r}$
comes about by the action of $U_\zeta({\mathfrak{gl}}_n)$ on its natural module $V$ of dimension $n$. (We will argue momentarily  that there is a similar surjection
using $U_\zeta({\mathfrak{sl}}_n)$ for $\mathfrak{sl}_n$.)

The $q$-Schur algebra $S_q(n,r)$ is a QHA with poset $\Lambda^+(n,r)$. The arguments in \S8 can be applied (with some care)
to obtain similar results about $S_q(n,r)$, viewing the latter as a quotient of $U_\zeta({\mathfrak{gl}}_n)$.
However, it is also possible to use ${\mathfrak{ sl}}_n$ and quote the results of \S8 more directly. We briefly
describe this ${\mathfrak{ sl}}_n$ approach. 

 Label the simple roots $\Pi=\{\alpha_1,\cdots, \alpha_{n-1}\}$ of $\Phi$ (a root system of type $A_{n-1}$) in the usual way.  Fix $r$ and let $\Gamma_{n,r}$ be the subset (poset ideal) of $X^+$ consisting of dominant weights $\lambda=\sum_{i=1}^{n-1} a_i\varpi_i$ such that $r\geq\sum ia_i\equiv r$
mod$\,n$.
 Given $\lambda\in\Lambda^+(n,r)$, put $\overline{\lambda}=\sum_{i=1}^{n-1} (\lambda_i-\lambda_{i-1})\varpi_i\in X^+$.
Then $\lambda\mapsto\overline\lambda$ is a poset isomorphism $\Lambda^+(n,r)\to \Gamma_{n,r}$. Under
this isomorphism, the set $\Lambda^{+,\sharp}(n,r)$ of chamber $e$-regular weights corresponds to the set of $e$-regular elements in
$\Gamma_{n,r}$.

Now form the QHA algebra $A:=U_{\Gamma_{n,r}}$ (as above Lemma \ref{DSlemma}, taking $U_\zeta=U_\zeta({\mathfrak{sl}}_n)$ and $\Gamma=\Gamma_{n,r}$, allowing posets of non-regular weights).  For convenience, also denote $A$ by $A_{\Gamma_{n,r}}$. Then the dimension of $A_{\Gamma_{n,r}}$ is the sum of squares of dimensions of standard modules, and it is found to be the same as the dimension of  $S_q(n,r)$.   
The inclusion $U_\zeta(\mathfrak{sl}_n)\subseteq U_\zeta(\mathfrak{gl}_n)$ induces an algebra
homomorphism $A_{\Gamma_{n,r}}
\to S_q(n,r)$, and an exact functor $T:S_q(n,r){\text{\rm --mod}}\to A_{\Gamma_{n,r}}{\text{\rm --mod}}$ which
is described explicitly as the restriction of $U_\zeta(\mathfrak{gl}_n)$-modules (with composition factors having
highest weights in $\Lambda^+(n,r)$) to $U_\zeta(\mathfrak{ sl}_n)$-modules (with composition factors having
highest weights in $\Gamma_{n,r}$). The functor $T$ takes
standard, costandard, and irreducible modules to standard, costandard, and irreducible modules, respectively,
and it is compatible with the poset isomorphism. Therefore, the Comparison Theorem \cite{PS-2}, \cite[Thm. 2]{CPS8} implies that
$T$ is a category equivalence. Since the functor $T$ here arises as a ``restriction" through a map of algebras, the latter algebra map
must be an isomorphism.

Let $u'_\zeta$ be the sum of the regular blocks in the small quantum group $u_\zeta({\mathfrak{sl}}_\zeta)$.

Now Theorems \ref{KoszulforgeneralizedqSchur}, \ref{gradingresult}, and Corollary \ref{inverse} imply the following result.

\begin{thm}\label{qSchurtheorem} Let $n,r$ be  positive integers, $e> n$ an odd integer.
If $B$ is any block of $S_q(n,r)$ corresponding to a subset (poset ideal) of
 $\Lambda^{+,\sharp}(n,r)$ of chamber $e$-regular partitions, then $\gr B$ is a QHA with a Koszul
grading. Also, $B^!\cong (\gr B)^!$ is a Koszul algebra (and a QHA). In addition, for 
 $\lambda\in\Lambda^{+,\sharp}(n,r)$, the standard (Weyl) module $\Delta_q(\lambda)$ has a positive grading, generated in grade 0,
for the Koszul algebra $u'_\zeta$. 

The standard modules for the algebra $\gr B$ are the modules $\gr \Delta_q(\lambda)$, $\lambda\in
\Lambda^{+,\sharp}(n,r)$, and the multiplicities of the irreducible modules in the various sections
$\rad^i\Delta_q(\lambda)/\rad^{i+1}\Delta_q(\lambda)$ are coefficients of inverse Kazhdan-Lusztig
polynomials. 
\end{thm}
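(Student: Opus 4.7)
The plan is to reduce everything to the quantum enveloping algebra results of \S8 via the isomorphism $S_q(n,r) \cong A_{\Gamma_{n,r}}$ established in the paragraph preceding the theorem. First I would recall the ingredients already assembled: the Comparison Theorem gives an algebra isomorphism $S_q(n,r) \cong U_{\Gamma_{n,r}}$ (here using $U_\zeta(\mathfrak{sl}_n)$), and the map $\lambda \mapsto \overline{\lambda}$ sets up a poset isomorphism $\Lambda^+(n,r) \overset{\sim}{\to} \Gamma_{n,r}$ that identifies $\Lambda^{+,\sharp}(n,r)$ with the $e$-regular weights in $\Gamma_{n,r}$. Under this isomorphism, standard, costandard, and irreducible modules correspond, so $\Delta_q(\lambda) \leftrightarrow \Delta_\zeta(\overline{\lambda})$, etc., and radical filtrations are preserved.

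Next I would observe that a block $B$ of $S_q(n,r)$ indexed by a poset ideal $\Gamma' \subseteq \Lambda^{+,\sharp}(n,r)$ corresponds, under the above isomorphism, to the quasi-hereditary quotient $U_\Gamma$ for a poset ideal $\Gamma$ in $\Reg$ (viewed inside $X^+$ for the $\mathfrak{sl}_n$ root system). Since $e > n = h$ is odd, all the running hypotheses of \S8 on $e$ are in force (type $A$ has no constraint modulo $3$ or $4$). Consequently Theorem \ref{KoszulforgeneralizedqSchur} applies verbatim to $B = U_\Gamma$: we obtain that $\gr B$ is a quasi-hereditary algebra with a Koszul grading, whose standard modules are $\gr \Delta_\zeta(\overline{\lambda}) \cong \gr \Delta_q(\lambda)$, satisfying a graded Kazhdan--Lusztig theory. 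Corollary \ref{inverse}(a) then gives $B^! \cong (\gr B)^!$ as Koszul quasi-hereditary algebras. For the radical-layer statement, Corollary \ref{inverse}(c) (applicable because all weights in $\Gamma$ are $e$-regular) says that the multiplicity $[\rad^i \Delta_\zeta(\overline{\lambda})/\rad^{i+1}\Delta_\zeta(\overline{\lambda}) : L_\zeta(\overline{\nu})]$ is the coefficient of $t^{l(\overline{\lambda})-l(\overline{\nu})-i}$ in the inverse Kazhdan--Lusztig polynomial $Q_{\overline{\nu},\overline{\lambda}}$, and the correspondence of irreducibles and radical filtrations transports this back to $\Delta_q(\lambda)$.

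For the grading of $\Delta_q(\lambda)$ as a module for the Koszul algebra $u'_\zeta$, I would invoke Theorem \ref{gradingresult}: choose any $e$-fat poset ideal $\Lambda$ in $\Reg$ containing $\overline{\lambda}$, so that Lemma \ref{condition} gives a normal Koszul subalgebra $\fa \cong u'_\zeta$ of $A = U_\Lambda$, and Theorem \ref{MainInfinitesimalThm} yields a positive $\fa$-grading of $\Delta_\zeta(\overline{\lambda})$ generated in grade $0$ with grade-$0$ component isomorphic to $L_\zeta(\overline{\lambda})$. Transporting this across the category equivalence (together with the identification of $u'_\zeta$ as a subalgebra of both pictures) gives the asserted grading on $\Delta_q(\lambda)$.

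The only real step requiring a moment's thought is the compatibility between the $\mathfrak{sl}_n$-picture and the $\mathfrak{gl}_n$-picture: the $q$-Schur algebra is customarily presented as a quotient of $U_\zeta(\mathfrak{gl}_n)$, whereas the framework of \S8 is built over $U_\zeta(\mathfrak{sl}_n)$. This potential obstacle is precisely what the Comparison Theorem dispatch in the preceding paragraphs resolves: the exact restriction functor $T$ is a highest-weight-category equivalence, hence realizes an algebra isomorphism, so no information about gradings, Koszulity, or Kazhdan--Lusztig multiplicities is lost in passing between the two pictures. Once this identification is in hand, the theorem is a direct translation of Theorem \ref{KoszulforgeneralizedqSchur}, Theorem \ref{gradingresult}, and Corollary \ref{inverse}(a),(c).
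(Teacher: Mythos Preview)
Your proposal is correct and follows exactly the paper's approach: the paper simply states that Theorems \ref{KoszulforgeneralizedqSchur}, \ref{gradingresult}, and Corollary \ref{inverse} imply the result, and your argument is precisely a spelled-out version of this reduction via the isomorphism $S_q(n,r)\cong U_{\Gamma_{n,r}}$ established just before the theorem. Your elaboration on why the hypotheses of \S8 are met (type $A$, $e>h=n$ odd) and on the transport of standard modules, radical filtrations, and the $u'_\zeta$-grading through the Comparison Theorem equivalence is accurate and adds nothing beyond what the paper intends.
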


Again, the only need for the assumption that $e$ is odd is to be able to quote \cite{AJS} as in footnote 8.

As in Theorem \ref{KoszulforgeneralizedqSchur},  $\Bmod$, $\gr B$-mod, and
$\gr B$-grmod have compatible dualities. The modules $\gr\Delta_q(\lambda)$ are linear, and $\gr B$-mod has a graded Kazhdan-Lusztig theory (cf. \S4).  Both $\Bmod$ and $\gr B$-mod have a
satisfy the property (KL) with the same (classical) Kazhdan-Lusztig polynomials.  

\begin{rem} It is interesting to compare these results on $q$-Schur algebras with Ariki's recent work \cite{Ariki}.
Ariki obtains a $\mathbb Z$-grading on the $q$-Schur algebra $S_q(n,r)$, assuming that $q=\zeta^2$ is a primitive $e'$th root
of unity with $e'\geq 4$. (Thus, $e'=e$ if $e$ is odd, and $e'=e/2$ if $e$ is even (allowed in this case).) In addition, when $n\geq r$, he obtains a formula
for $[\Delta_q(\lambda):L_q(\nu)(r)]$ similar to that given in Corollary \ref{inverse}(b) for the multiplicity of $L_\zeta(\nu)$
in $F^i(\lambda)/F^{i+1}(\lambda)$. Each standard module $\Delta_q(\lambda)$ is given a specific $\mathbb Z$-grading. Since Corollary \ref{inverse} requires that $e>h=n$, and since the Hecke algebra $H_r$ (obtained from $\wH_r$ by specializing $\q$ to
$q$), and hence $S_q(n,r)$,
are semisimple if $e'>r$, Ariki's results and those above apply to different situations.  

In the preprint \cite{PS6},   we  eliminate the
hypothesis $e >n$ from Corollary \ref{inverse}(b) in many cases, and completely
so in type ${\mathfrak{sl}}_n$. Thus, the comparison of the two theories becomes more
meaningful.
In particular, one can ask if there is an adjustment (regrading)
of Ariki's graded
$q$-Schur algebra $A$ to give a graded isomorphism $A \cong \gr A$, or even ask if there is
a regrading making $A$ Koszul.  \end{rem}

\section{Representations in positive characteristic of simple algebraic groups }
Let $p$ be a prime integer, and continue to assume that $\Phi$ is a root system in the Euclidean
space $\mathbb E$ as in \S8. Use the notations of \S8 involving  $\Phi$,
taking $e=p$ in this section. For example, $W_p:=W\ltimes p{\mathbb Z}\Phi$, etc.  
The Jantzen region in $X^+$ is   the set
$$\Jan:=\{x\in X^+\,|\,(x+\rho,\alpha_0^\vee)\leq p(p-h+2)\}.$$
As in \S8 (but taking $e=p$), $\Res$ denotes the poset ideal in $X^+$ generated by the set $X^+_1:=X^+_{1,p}$ of
$p$-restricted dominant weights. Put $C^-:=\{x\in{\mathbb E}\,|\, (x+\rho,\alpha^\vee)\leq 0,\alpha\in\Pi, (x+\rho,\alpha^\vee_0)>-p\}$, the fundamental anti-dominant alcove. If $p\geq 2h-3$, $X^+_1\subset\Jan$, so that $\Res$ is a poset ideal
 in $\Jan$. We will always assume (unless otherwise explicitly noted) that $p\geq h$; thus, $-2\rho\in C^-$; and, in particular, $\Reg\not=\emptyset$.
Given a $p$-regular dominant weight $\lambda$, write $\lambda=w\cdot\lambda^-$ for $w=w_\lambda\in W_p$, $\lambda^-\in C^-\cap X$.  (The element $w_\lambda$ is uniquely determined.) We define $l(w)=l(w_\lambda)$ to be the length of $\lambda$. We will use this length function on $\Reg$ or its ideals below
(and as the length function for a (KL) property).

 Let $G$ be a simple simply connected algebraic group over an algebraically closed field $K$ of positive
 characteristic $p$. Assume $G$ defined and split over the prime
field ${\mathbb F}_p$.  Then $G$ has root system denoted $\Phi$ with
respect to a maximal split torus $T$. For $\lambda\in X^+$, let $L(\lambda)$ be the irreducible 
rational $G$-module of highest weight $\lambda$. Let $U={\text{\rm Dist}}(G)$ be the distribution (hyperalgebra) of $G$. The category of rational $G$-modules is equivalent to
the category of locally finite $U$-modules.

 For a finite ideal $\Gamma$ in the poset of $p$-regular
dominant weights, define (just as in (\ref{Asubgamma}) for $U=U_\zeta$) the algebra $U_\Gamma$
to be the quotient $U_\Gamma=U/J_\Gamma$, where $J_\Gamma$ is the annihilator of 
$U{\text{\rm --mod}}[\Gamma]$ (the full subcategory of finite dimensional rational $G$-modules generated by the
$L(\gamma)$, $\gamma\in\Gamma$). As in the quantum case of \S8, $A_\Gamma$ is a QHA with
weight poset $\Gamma$. (For instance, one needs only to mimic the argument in \cite[Prop. 3.5]{DS} mentioned
in \S8.)

Let $F:G\to G$ be the Frobenius morphism with kernel $G_1$, the
first infinitesimal subgroup of $G$. If $V\in\Gmod$ (the category of rational $G$-modules), let $V^{(1)}:=F^*V$, the
pullback of $V$ through $F$. If $\nu_0\in X_1^+$, let $Q(\nu_0)$ be the projective indecomposable cover of the irreducible $G_1$-module $L(\nu_0)|_{G_1}$.  If $p\geq 2h-2$, then $Q(\nu_0)$ has a unique compatible structure as a rational $G$-module \cite[\S11.1]{JanB}.
In fact, this module identifies with the indecomposable tilting module $T(2(p-1)\rho+w_0\nu_0)$ of highest weight $2(p-1)\rho+w_0\nu_0$ in $\Gmod$.

Let $\Xi\subseteq X^+$. Define $\Xi_1\subseteq X^+$ as follows: given $\lambda\in \Xi$, write
$\lambda=\lambda_0+p\lambda_1$, where, as usual, $\lambda_0\in X^+_1$, $\lambda_1\in X^+$.
Now put $\Xi_1=\{\lambda_1\,|\,\lambda\in\Xi\}$. 
Put 
$$a_1(\Xi):=\max_{\gamma\in \Xi_1}\{(\gamma,\alpha_0^\vee)\}.$$

The reader may check that, if $\lambda\in X^+$ belongs to $W_p\cdot 0$ and if $(\lambda,\alpha_0^\vee)
<2p-2h+2$, then $\lambda=0$. In this case, $H^1(G,L(\lambda))=0$. For the definition of $\Lambda(0)$
in the lemma below, see the discussion below (8.3.1) for $\Psi(0)$.

\begin{lem}\label{newnewlemma} Assume that $p\geq 2h-2$. Let $\Lambda$ be a finite non-empty poset ideal
in $X^+$ or $\Reg$. 

(a) If $a_1(\Lambda)< p - h+1$, then 
$$\Ext^1_G(Q(\mu_0)\otimes L(\mu_1)^{(1)},L(\lambda))=0,\quad\forall\lambda,\mu\in \Lambda$$

(b)  If 
$a_1(\Lambda) + a_1(\Lambda(0))<2p-2h+2$, then, for each $\mu\in\Lambda$,
 the module $Q(\mu_0)\otimes L(\mu_1)^{(1)}$ is the
projective cover of $L(\mu)$ in $U_{\Lambda(0)}$-mod.  
\end{lem}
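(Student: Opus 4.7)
The plan is to reduce both parts to the vanishing of $\Ext^1_G(L(\mu_1),L(\lambda_1))$ between irreducible $G$-modules and then apply a classical linkage bound. Because $p\geq 2h-2$, the $G_1$-projective cover $Q(\mu_0)$ lifts to a compatible rational $G$-module (the indecomposable tilting module $T(2(p-1)\rho+w_0\mu_0)$), and tensoring with the $G_1$-trivial module $L(\mu_1)^{(1)}$ preserves $G_1$-projectivity. The Lyndon--Hochschild--Serre spectral sequence for $G_1\triangleleft G$ therefore collapses to
\[
\Ext^n_G\bigl(Q(\mu_0)\otimes L(\mu_1)^{(1)},L(\lambda)\bigr)\cong H^n\bigl(G/G_1,\Hom_{G_1}(Q(\mu_0)\otimes L(\mu_1)^{(1)},L(\lambda))\bigr).
\]
Using Steinberg's tensor product theorem to write $L(\lambda)=L(\lambda_0)\otimes L(\lambda_1)^{(1)}$, the identity $\Hom_{G_1}(Q(\mu_0),L(\lambda_0))=\delta_{\mu_0,\lambda_0}K$, and the fact that Frobenius-twisted factors can be pulled out of $\Hom_{G_1}$, one obtains an isomorphism of $G/G_1$-modules
\[
\Hom_{G_1}\bigl(Q(\mu_0)\otimes L(\mu_1)^{(1)},L(\lambda)\bigr)\cong\delta_{\mu_0,\lambda_0}\bigl(L(\lambda_1)\otimes L(\mu_1)^{*}\bigr)^{(1)}.
\]
Taking $n=1$ and invoking the Frobenius-twist identification $H^1(G/G_1,V^{(1)})\cong H^1(G,V)$ yields the key reduction $\Ext^1_G(Q(\mu_0)\otimes L(\mu_1)^{(1)},L(\lambda))\cong\delta_{\mu_0,\lambda_0}\Ext^1_G(L(\mu_1),L(\lambda_1))$.

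Next I would invoke the Andersen $\Ext^1$ bound: if $\Ext^1_G(L(\tau),L(\sigma))\neq 0$ for distinct dominant $\tau,\sigma$, then there is a positive root $\beta$ and an integer $n\geq 1$ with $\sigma=s_{\beta,np}\cdot\tau$, forcing $(\tau+\sigma+2\rho,\beta^\vee)=2np\geq 2p$. Since $\alpha_0$ is the maximal short root, $\beta^\vee\leq\alpha_0^\vee$ in the coroot partial order, so $(x,\beta^\vee)\leq(x,\alpha_0^\vee)$ for every dominant $x$. For part (a), applied with $\tau=\mu_1,\sigma=\lambda_1$ and $\mu,\lambda\in\Lambda$, this would give $2a_1(\Lambda)+2(h-1)\geq 2p$, contradicting $a_1(\Lambda)<p-h+1$; the case $\mu_1=\lambda_1$ is handled by the absence of self-extensions of irreducibles. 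Part (b) is the same estimate run with $\mu\in\Lambda$ and $\lambda\in\Lambda(0)$, where the hypothesis $a_1(\Lambda)+a_1(\Lambda(0))<2p-2h+2$ contradicts $a_1(\Lambda)+a_1(\Lambda(0))+2(h-1)\geq 2p$.

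Finally, for (b) I would promote this $\Ext^1$-vanishing to the projective-cover statement inside $A_{\Lambda(0)}\text{--mod}$. The module $Q(\mu_0)\otimes L(\mu_1)^{(1)}$ has simple head $L(\mu_0)\otimes L(\mu_1)^{(1)}\cong L(\mu)$ and highest weight $\mathfrak f_p(\mu)=2(p-1)\rho+w_0\mu_0+p\mu_1$, which lies in the poset ideal $\Lambda(0)$ by its very definition; consequently all its composition factors $L(\nu)$ satisfy $\nu\in\Lambda(0)$, placing the module in $A_{\Lambda(0)}\text{--mod}\cong G\text{--mod}[\Lambda(0)]$. Combining the $\Ext^1$-vanishing just proved with the identification $\Ext^1_{A_{\Lambda(0)}}\cong\Ext^1_G$ from (4.0.2), one concludes that $Q(\mu_0)\otimes L(\mu_1)^{(1)}$ is projective in $A_{\Lambda(0)}\text{--mod}$, and the simple-head property identifies it as the projective cover of $L(\mu)$.

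The step I expect to be the main obstacle is the bookkeeping for the Hom identification: one must carefully track the $G/G_1$-action through the Steinberg decomposition and the Frobenius twist in order to extract $\Ext^1_G(L(\mu_1),L(\lambda_1))$ cleanly on the right. Once this reduction is in place, the Andersen bound, together with the observation that $\alpha_0^\vee$ dominates every positive coroot, disposes of both parts in a uniform way.
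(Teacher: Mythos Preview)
Your approach via the Hochschild--Serre spectral sequence for $G_1\triangleleft G$ is exactly the paper's. Your reduction to $\Ext^1_G(L(\mu_1),L(\lambda_1))$ is correct (and slightly more streamlined than the paper's, which first moves $L(\mu_1)^{(1)}$ across as a tensor factor before applying the spectral sequence); the two routes land at the same place, namely $H^1(G,L(-w_0\mu_1)\otimes L(\lambda_1))$. Your treatment of part (b), placing $Q(\mu_0)\otimes L(\mu_1)^{(1)}$ inside $A_{\Lambda(0)}$--mod via its highest weight ${\mathfrak f}_p(\mu)$ and then invoking (\ref{derivedembedding}), also matches the paper.

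The one concern is your final vanishing step. You invoke an ``Andersen $\Ext^1$ bound'' asserting that $\Ext^1_G(L(\tau),L(\sigma))\neq 0$ with $\tau\neq\sigma$ forces $\sigma=s_{\beta,np}\cdot\tau$ for a \emph{single} affine reflection. This is not a standard result in the generality stated: the linkage principle only gives $W_p$-conjugacy, and for general weights the head of $\rad\Delta(\tau)$ (which controls such $\Ext^1$ when $\tau>\sigma$) can contain irreducibles whose highest weights are not adjacent to $\tau$ by one reflection. Single-reflection results of Andersen's type are typically stated for $\Ext^1$ involving a Weyl or induced module, not two irreducibles. The paper avoids this issue entirely: it decomposes $L(-w_0\mu_1)\otimes L(\lambda_1)$ into composition factors $L(\tau)$ and applies only the ordinary linkage principle to $H^1(G,L(\tau))$, so that nonvanishing forces $\tau\in W_p\cdot 0$. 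The elementary remark recorded just before the lemma then shows that the only $\tau\in W_p\cdot 0\cap X^+$ with $(\tau,\alpha_0^\vee)<2p-2h+2$ is $\tau=0$, and $H^1(G,K)=0$. Since every composition factor weight satisfies $(\tau,\alpha_0^\vee)\leq(\mu_1,\alpha_0^\vee)+(\lambda_1,\alpha_0^\vee)$, the numerical hypotheses in (a) and (b) each give exactly this bound. You should replace the single-reflection claim with this more elementary argument.
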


\begin{proof} First, we prove (b).
The construction of $\Lambda(0)$ guarantees that the highest weight
of $Q:=Q(\mu_0)\otimes L(\mu_1)^{(1)}$ belongs to $\Lambda(0)$. Thus, $Q $ is
a $U_{\Lambda(0)}$-module whose head is obviously
  $L(\mu)$. To prove that $Q$ is projective as a $U_{\Lambda(0)}$-module, we need only check that
$\Ext^1_G(Q,L(\nu))=0$ for any $\nu\in\Lambda(0)$. This is equivalent to showing that 
$\Ext^1_G(Q(\mu_0),L(p\mu_1)^*\otimes L(\nu))=0,$ $\forall\nu\in\Lambda(0).$
The hypothesis guarantees that all the composition factors of $L(p\mu_1)^*\otimes L(\nu)$ 
have the form $L(\tau)^{(1)}\otimes L(\nu_0)$ with the only possible $\tau\in W_p\cdot 0$ being $0$
itself.   The
required vanishing follows from a standard Hochschild-Serre spectral sequence argument, using the normal subgroup scheme $G_1$ of $G$. Thus, (b) holds. 
Finally, (a) follows from a similar vanishing argument, replacing the condition ``$\nu\in\Lambda(0)$" with ``$\nu\in\Lambda$".\end{proof}
  
  Elementary methods for checking the hypotheses of Lemma \ref{newnewlemma} will be
  given in Lemma \ref{blowingup} and Corollary \ref{firstcorollary}, as well as (the proof of) the corollaries below.

\begin{cor}\label{cortonewlemma}  Assume that $p\geq 2h-2$. Let $\Lambda$ be a
 finite non-empty poset ideal in $X^+$ or $\Reg$ satisfying $a_1(\Lambda)<p-h+1$ (as
 in (a) of the above lemma). 
 If $A=A_{\Lambda}$ and if $\fa$ denotes the image of  $u$ in $A$, then  $(\rad\fa)A=\rad A$. In particular, the latter equality holds
 if $\Lambda$ is any finite non-empty poset ideal in $\Res$ or $\Resreg$. \end{cor}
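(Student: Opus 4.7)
The plan is to reduce $(\rad\fa)A=\rad A$ to showing $\rad_G P=\rad_{G_1}P$ for every PIM $P$ of $A=A_\Lambda$. Since $\fa$ is the image of $u$ in $A$, it acts on any $A$-module through its $G_1$-module structure, so $(\rad\fa)P=\rad_{G_1}P$; and $\rad_A P=\rad_G P$.

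Fix $\mu=\mu_0+p\mu_1\in\Lambda$ and set $Q:=Q(\mu_0)\otimes L(\mu_1)^{(1)}$, a $G$-module since $p\geq 2h-2$. I will identify the PIM of $L(\mu)$ in $\Amod$ with the largest $G$-quotient $Q/K_\Lambda$ of $Q$ lying in $\Amod$. The projectivity of $Q/K_\Lambda$ in $\Amod$ will follow from the Ext-vanishing $\Ext^1_G(Q,L(\lambda))=0$ of Lemma \ref{newnewlemma}(a)---the only point at which the hypothesis $a_1(\Lambda)<p-h+1$ is invoked---combined with $\Hom_G(K_\Lambda,L(\lambda))=0$ for $\lambda\in\Lambda$ (by construction of $K_\Lambda$).

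The heart of the proof is the identity $\rad_G Q=\rad_{G_1}Q$. The inclusion $\rad_G Q\subseteq\rad_{G_1}Q$ is automatic, since $Q/\rad_{G_1}Q=L(\mu_0)\otimes L(\mu_1)^{(1)}=L(\mu)$ is already $G$-simple. For the reverse (equivalently, that the $G$-head of $Q$ is just $L(\mu)$), I will compute $\Hom_G(Q,L(\nu))$ using adjunction and Steinberg's theorem:
\[
\Hom_G(Q,L(\nu))=\Hom_G\bigl(Q(\mu_0),\,L(\nu_0)\otimes (L(\nu_1)\otimes L(\mu_1)^*)^{(1)}\bigr);
\]
the Hochschild--Serre spectral sequence for $G_1\triangleleft G$ collapses (using that $Q(\mu_0)$ is $G_1$-projective and $\Hom_{G_1}(Q(\mu_0),L(\nu_0))=\delta_{\nu_0,\mu_0}K$), giving $\delta_{\nu_0,\mu_0}\cdot(L(\nu_1)\otimes L(\mu_1)^*)^G=\delta_{\nu,\mu}K$. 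Since $K_\Lambda\subseteq\rad_G Q$ (because $Q/K_\Lambda$ surjects onto $L(\mu)$), the radical equality then passes to the quotient $Q/K_\Lambda$.

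For the ``in particular'' clause, I verify $a_1(\Lambda)<p-h+1$ whenever $\Lambda$ is a poset ideal in $\Res$ or $\Resreg$. Any $\lambda=\lambda_0+p\lambda_1$ in such $\Lambda$ satisfies $\lambda\leq\mu_0$ for some $\mu_0\in X^+_1$; writing $\alpha_0^\vee=\sum n_i^\vee\alpha_i^\vee$ (with $\sum n_i^\vee=(\rho,\alpha_0^\vee)=h-1$) gives $(\lambda,\alpha_0^\vee)\leq(\mu_0,\alpha_0^\vee)\leq(p-1)(h-1)$, so $(\lambda_1,\alpha_0^\vee)\leq h-2<p-h+1$ under $p\geq 2h-2$. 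The main anticipated obstacle is the Hochschild--Serre head computation, but this will go through cleanly because $p\geq 2h-2$ guarantees $G_1$-projectivity of $Q(\mu_0)$ together with a compatible $G$-structure.
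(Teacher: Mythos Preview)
Your proof is correct and follows essentially the same route as the paper's: identify each PIM of $A_\Lambda$ as the $\Lambda$-truncation of $Q=Q(\mu_0)\otimes L(\mu_1)^{(1)}$ via the Ext-vanishing of Lemma~\ref{newnewlemma}(a), establish $\rad_G Q=\rad_{G_1}Q$, and pass to the quotient. The only difference is one of detail: the paper dispatches $\rad_G Q=\rad_{G_1}Q$ with ``clearly,'' whereas you supply the Hochschild--Serre computation showing $\Hom_G(Q,L(\nu))=\delta_{\mu,\nu}K$; your bound $a_1(\Lambda)\leq h-2$ for the ``in particular'' clause is the integer sharpening of the paper's $a_1(\Lambda)\leq\frac{p-1}{p}(h-1)<h-1$.
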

 
\begin{proof} First,  $a_1(\Lambda)\leq\frac{p-1}{p}(h-1)<h-1$ if $\Lambda$ is contained in $\Gamma_{\text{\rm res}}$.  If $p\geq 2h-2$, then $a_1(\Lambda)< p - h+1$.  Thus, the second assertion of the
corollary follows from the first. To prove the first assertion, Lemma \ref{newnewlemma}(a)  shows that, for each $\mu\in\Lambda$, the largest quotient $Q_\Lambda$ of $Q=Q(\mu_0)\otimes L(\mu_1)^{(1)}$ with composition factors having highest weights in
$\Lambda$ is the projective cover of $L(\mu)$ in $A$-mod. However, we clearly have
$(\rad u)Q=\rad Q$, the latter taken in the category $G$-mod. Thus, $\rad Q_\Lambda=(\rad \fa)Q_\Lambda$
in $A$-mod. The left $A$-module $A$ is a direct sum of modules $Q_\Lambda$, with
$\mu$ varying over elements of $\Lambda$ (allowing repetitions). Thus, $\rad A=(\rad \fa)A$, completing
the proof. (We remark that much of the argument parallels that for the quantum case (the last assertion
of Lemma 8.3).)
\end{proof}

If $\Lambda\subseteq\Reg$, then $\fa$ is the also the image of $u'$, the sum of
the $p$-regular blocks in $u$.

For a given finite non-empty poset ideal $\Gamma$ of $X^+$ or $\Reg$, let $N=N(\Gamma)$ be the global dimension of  $A=U_\Gamma$.
Then $A$ is a QHA, so $N$ is a non-negative integer. 
An estimate on $N$, assuming $\Gamma\subseteq\Jan$ consists of $p$-regular weights and the LCF holds on $\Gamma$ can be obtained as follows.   First, 
 $$N=\max\{n\,|\, \Ext^n_G(L(\mu),L(\nu))\not=0,\,\,\,\mu,\nu\in\Gamma\}.$$
 By \cite[Thm. 3.5]{CPS1},  $N$ is the maximum $n_1+n_2$ for which $\Ext^{n_1}_G(L(\mu),\nabla(\tau))\not=0$ and $\Ext_G^{n_2}(\Delta(\tau),L(\nu))\not=0$ for some $\tau\in\Gamma$. 
The dimensions of $\Ext^m_G(\Delta(\tau),L(\nu))$ and  $\Ext_G^m(L(\nu),\nabla(\tau))$ are zero if $\nu\not\in
W_p\cdot\tau$. Otherwise, this (common) dimension is the coefficient of $q^{\frac{l(w)-l(y)-m}{2}}$ in a Kazhdan-Lusztig polynomial
$P_{y,w}(q)$ for $y,w\in W_p$ with $y^{-1}\cdot\tau= 
w^{-1}\cdot\nu\in C^-$.  Obviously, the maximum value of $m$ can be no more than the maximum
value of $l(w)-l(y)$, with $y^{-1}\cdot\tau, w^{-1}\cdot\nu\in C^-$. The element $y$ of minimal length such that
$y^{-1}\cdot\tau\in C^-$, for some $\tau\in \Gamma$, is $y=w_0$. If $\nu\in\Gamma$ with $w^{-1}\cdot\nu
\in C^-$, we have $w=w_0d$ with $l(w)=l(w_0)+l(d)$, computing lengths with respect to $C^-$ (i.~e.,
taking the fundamental reflections for the Coxeter group $W_p$ to be those in the walls of $C^-$). The
number $l(d)$ can be computed explicitly in terms of $\nu$: In the spirit of \cite[II, 6.6]{JanB}, for any
$\lambda\in\Reg$, let ${\bold d}(\lambda):=\sum_{\alpha\in\Phi^+}n_\alpha$ with
$pn_\alpha<(\lambda+\rho,\alpha^\vee)<p(n_\alpha+1)$. For $\nu\in\Gamma$, $w^{-1}\cdot\nu\in C^-$,
we can now calculate the length of $d$ above. Define $w':=w_0dw_0$, so that $(w')^{-1}\cdot\nu
\in w_0\cdot C^-=: C^+$. Of course, $0\in C^+$. Then ${\bold d}(\nu)$ counts the number of hyperplanes of the
form $\{x\,|\,(x+\rho,\alpha^\vee)=pm\}$, $m\in\mathbb Z$, which separate $\nu$ from 0, equivalently, 
$\nu$ from $(w')^{-1}\cdot \nu\in C^+$. This number is the length $l'(w')$, computed taking the
fundamental reflections to be in the walls of $C^+$. Thus, $l'(w')=l(w_0w'w_0)=l(d)$. So  
that $l(d)={\bold d}(\nu)$.

\begin{prop}\label{charp} Let $\Gamma$ be a finite non-empty poset ideal of $p$-regular weights contained in $\Jan$ for which the LCF holds. Let  $N=N(\Gamma)$ be the global dimension of $A=U_\Gamma$.  Then 
$$N\leq 2\max_{\nu\in\Gamma}\{{\bold d}(\nu)\}.$$
(In case $\Gamma=\Resreg:=\Res\cap \Reg$, $N\leq 2{\bold d}((p-2)\rho)\leq (h-1)|\Phi|$.)
 \end{prop}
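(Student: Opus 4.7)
The plan is to formalize the argument sketched in the paragraph immediately preceding the statement. By \cite[Thm. 3.5]{CPS1},
\[ N = \max\bigl\{n_1 + n_2 \,:\, \exists\,\mu,\tau,\nu \in \Gamma,\; \Ext^{n_1}_G(L(\mu),\nabla(\tau)) \neq 0,\; \Ext^{n_2}_G(\Delta(\tau),L(\nu)) \neq 0\bigr\}. \]
The LCF hypothesis on $\Gamma$ converts each such dimension into a coefficient of a Kazhdan-Lusztig polynomial $P_{w_\tau, w_\mu}(q)$ (respectively $P_{w_\tau, w_\nu}(q)$), and nonvanishing of the $m$-th Ext group forces $m \leq l(w_\mu) - l(w_\tau)$ (respectively $m \leq l(w_\nu) - l(w_\tau)$), the lengths being those of the Coxeter presentation of $W_p$ with simple reflections at the walls of $C^-$. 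Adding gives $n_1 + n_2 \leq l(w_\mu) + l(w_\nu) - 2 l(w_\tau)$ for $\mu, \tau, \nu \in \Gamma$ all in the same $W_p$-orbit.

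Next I would choose $\tau$ so as to minimize $l(w_\tau)$. Fix the orbit with antidominant representative $\lambda^- \in C^-$, and set $\tau_0 := w_0 \cdot \lambda^-$. Then $\tau_0$ lies in the first dominant alcove (so $l(w_{\tau_0}) = l(w_0)$) and is the minimum of $W_p \cdot \lambda^- \cap X^+$ in the dominance order. Since $\Gamma$ is a poset ideal containing $\mu$ (equivalently $\nu$), we get $\tau_0 \in \Gamma$, and substituting $\tau = \tau_0$ yields
\[ N \leq \max_{\mu, \nu \in \Gamma}\bigl\{(l(w_\mu) - l(w_0)) + (l(w_\nu) - l(w_0))\bigr\} = \max_{\mu, \nu \in \Gamma}\bigl\{{\bold d}(\mu) + {\bold d}(\nu)\bigr\}, \]
using the length decomposition $w_\mu = w_0 d_\mu$ with $l(w_\mu) = l(w_0) + l(d_\mu) = l(w_0) + {\bold d}(\mu)$ recorded just above the statement. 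This gives the first inequality $N \leq 2 \max_{\nu \in \Gamma}{\bold d}(\nu)$.

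For the parenthetical bound with $\Gamma = \Resreg$, I would verify that ${\bold d}$ is maximized on $\Resreg$ at $\nu = (p-2)\rho$ and then estimate: for each $\alpha \in \Phi^+$, $((p-2)\rho + \rho, \alpha^\vee) = (p-1)(\rho, \alpha^\vee) \leq (p-1)(h-1)$, which forces the integer $n_\alpha$ defined by $p n_\alpha < (p-1)(\rho, \alpha^\vee) < p(n_\alpha + 1)$ to satisfy $n_\alpha \leq h - 2$. Summing over the $|\Phi|/2$ positive roots gives $2{\bold d}((p-2)\rho) \leq (h-2)|\Phi| \leq (h-1)|\Phi|$.

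The main obstacle is a careful verification of the two classical but delicate facts powering the argument: that $w_0 \cdot \lambda^-$ is the dominance-order minimum of $W_p \cdot \lambda^- \cap X^+$, and the length-additivity identity $l(w_\mu) = l(w_0) + {\bold d}(\mu)$. Both depend on using the Coxeter presentation of $W_p$ whose simple reflections are the walls of $C^-$, together with the observation that a minimal gallery from $C^-$ to $w_\mu \cdot C^-$ may be decomposed as one from $C^-$ to $w_0 \cdot C^-$ (crossing only the reflection hyperplanes $(x+\rho,\alpha^\vee)=0$) followed by one from $w_0 \cdot C^-$ to $w_\mu \cdot C^-$ (crossing only $p$-affine hyperplanes), so that lengths add without cancellation.
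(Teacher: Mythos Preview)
Your proposal is correct and follows essentially the same route as the paper: the argument you formalize is precisely the one sketched in the paragraph preceding the proposition (the use of \cite[Thm.~3.5]{CPS1}, the Kazhdan--Lusztig bound $m\leq l(w)-l(y)$, the minimization $y=w_0$, and the identity $l(w_\nu)-l(w_0)={\bold d}(\nu)$). Your explicit check that $\tau_0=w_0\cdot\lambda^-$ lies in $\Gamma$ via the poset-ideal property, and your numerical estimate $n_\alpha\leq h-2$ (yielding the slightly sharper $(h-2)|\Phi|$), fill in details the paper leaves implicit.
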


 Hypothesis \ref{section7hypothesis} requires the following result. Recall that
if $\emptyset\not=\Psi\subseteq\Reg$ and if $m\geq -1$, the poset ideal $\Psi(m)$ is defined after (\ref{definitionoff}) in \S8.

\begin{lem}\label{blowingup} Let $\Lambda$ be a finite non-empty poset ideal in $X^+$ or $\Reg$.
Then for any integer $m\geq -1$,  
$$a_1(\Lambda(m))\leq a_1(\Lambda) + 2(m+1)(h-1)$$
with strict inequality whenever $m\geq 0$.
 \end{lem}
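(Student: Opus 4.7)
The plan is to prove the bound by induction on $m\geq -1$. The base case $m=-1$ is immediate: since $\Lambda$ is already a poset ideal, $\Lambda(-1)=\Lambda$, so $a_1(\Lambda(-1))=a_1(\Lambda)$, which is the (non-strict) inequality at $m=-1$.

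For the inductive step from $m-1$ to $m$, with $m\geq 0$, take any $\eta\in\Lambda(m)$. By construction there is a $\xi\in\Lambda(m-1)$ with $\eta\leq{\mathfrak f}_p(\xi)$, where (per the description after (\ref{definitionoff})) ${\mathfrak f}_p(\xi)=2(p-1)\rho+w_0\xi_0+p\xi_1$ is the highest weight of $Q_\zeta(\xi)=Q_\zeta(\xi_0)\otimes L(\xi_1)^{(1)}$. Using $(\rho,\alpha_0^\vee)=h-1$ and $w_0\alpha_0^\vee=-\alpha_0^\vee$ (because $\alpha_0$ is the highest short root, so $w_0\alpha_0=-\alpha_0$), a direct computation gives
\begin{equation*}
({\mathfrak f}_p(\xi),\alpha_0^\vee)=2(p-1)(h-1)-(\xi_0,\alpha_0^\vee)+p(\xi_1,\alpha_0^\vee)\leq 2(p-1)(h-1)+p(\xi_1,\alpha_0^\vee),
\end{equation*}
the bound using $(\xi_0,\alpha_0^\vee)\geq 0$ (the highest coroot takes non-negative values on $X^+$).

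To propagate this to $\eta$, I use that $\alpha_0^\vee$ pairs non-negatively with every positive root: if $\alpha\in\Phi^+$ is written $\alpha=\sum c_i\alpha_i$ with $c_i\geq 0$, then $(\alpha,\alpha_0^\vee)=\sum c_i(\alpha_i,\alpha_0^\vee)\geq 0$ because each $(\alpha_i,\alpha_0^\vee)$ is, up to the positive factor $2/(\alpha_i,\alpha_i)$, just $(\alpha_0,\alpha_i^\vee)\geq 0$ by dominance of $\alpha_0$. Consequently $\eta\leq{\mathfrak f}_p(\xi)$ forces $(\eta,\alpha_0^\vee)\leq({\mathfrak f}_p(\xi),\alpha_0^\vee)$. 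Writing $\eta=\eta_0+p\eta_1$ with $\eta_0\in X_1^+$, and again using $(\eta_0,\alpha_0^\vee)\geq 0$, one obtains
\begin{equation*}
(\eta_1,\alpha_0^\vee)\leq\frac{(\eta,\alpha_0^\vee)}{p}\leq\frac{2(p-1)(h-1)}{p}+(\xi_1,\alpha_0^\vee)\leq\frac{2(p-1)(h-1)}{p}+a_1(\Lambda(m-1)).
\end{equation*}
Taking the supremum over $\eta\in\Lambda(m)$ yields $a_1(\Lambda(m))\leq\tfrac{2(p-1)(h-1)}{p}+a_1(\Lambda(m-1))<2(h-1)+a_1(\Lambda(m-1))$, and the inductive hypothesis $a_1(\Lambda(m-1))\leq a_1(\Lambda)+2m(h-1)$ then gives the strict inequality $a_1(\Lambda(m))<a_1(\Lambda)+2(m+1)(h-1)$, as required.

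No serious obstacle is expected: the calculation is essentially mechanical. The reason each step contributes only $2(p-1)(h-1)/p$ rather than the full $2(h-1)$ is the factor $(p-1)/p$ arising when dividing by $p$ to pass from $(\eta,\alpha_0^\vee)$ to $(\eta_1,\alpha_0^\vee)$; this shortfall is precisely the source of the strict inequality for $m\geq 0$. The only conceptual input beyond the definitions is that the highest coroot $\alpha_0^\vee$ takes non-negative values on the positive root cone.
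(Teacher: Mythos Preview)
Your proof is correct and follows essentially the same approach as the paper's: both establish the base case $m=-1$ trivially, then reduce to the one-step estimate $a_1(\Lambda(m))<a_1(\Lambda(m-1))+2(h-1)$ by bounding $(\eta_1,\alpha_0^\vee)$ via $(\eta,\alpha_0^\vee)\leq({\mathfrak f}_p(\xi),\alpha_0^\vee)$ and dividing by $p$. The paper's arrangement rewrites ${\mathfrak f}_p(\xi)=(p-2)\rho+w_0\xi_0+p(\xi_1+\rho)$ before pairing, while you compute directly with $2(p-1)\rho$; the resulting numerics coincide, and you are a bit more explicit about why $\alpha_0^\vee$ is monotone along the dominance order.
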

\begin{proof} Clearly, the inequality  holds for $m=-1$. By induction, it suffices to treat the case $m=0$.  By
definition, $\Lambda(0)$ is the poset ideal generated by the weights 
${\mathfrak f}_p(\lambda)=2(p-1)\rho + w_0\lambda_0 + p\lambda_1= (p-2)\rho + w_0\lambda_0+p(\lambda_1+\rho)$, $\lambda\in\Lambda$. Let $\gamma$ be a dominant weight such that $\gamma\leq {\mathfrak f}_p(\lambda)$. Thus,
$$p(\gamma_1,\alpha^\vee_0)\leq (p-2)(\rho,\alpha_0^\vee) + pa_1(\Lambda) +p(h-1).$$
Dividing by $p$ gives
$(\gamma,\alpha_0^\vee)< h-1 + a_1(\Lambda) + h-1=a_1(\Lambda)+ 2(h-1),$
as required.\end{proof}

The following corollary is  an easy consequence of Lemma \ref{blowingup}.  
 
\begin{cor}\label{firstcorollary} Let $\Lambda$ be a finite non-empty poset ideal in $\Res$ or in $\Resreg$. If $m\geq-1$, 
then
$$a_1(\Lambda(m))< (2m+3)(h-1).$$
In addition, if $p\geq (2m+3)(h-1)$, then 
  $$a_1(\Lambda(m-1))+a_1(\Lambda(m))< 2p-2h+2.$$
\end{cor}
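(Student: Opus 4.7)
The plan is to deduce both inequalities directly from Lemma \ref{blowingup}, using only the base estimate $a_1(\Lambda) < h-1$ already noted in the proof of Corollary \ref{cortonewlemma}. There is no substantive obstacle here: the statement is an arithmetic propagation of a single bound through the recursive construction $\Lambda \mapsto \Lambda(m)$.

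First I would pin down the base bound. For $\Lambda \subseteq \Gamma_{\text{res}}$ (or $\Gamma_{\text{res,reg}}$) and $\lambda \in \Lambda$, we have $\lambda \leq \mu$ for some $\mu \in X^+_1$. Because $\alpha_0$ is the maximal short root, it is dominant, hence $(\alpha, \alpha_0^\vee) \geq 0$ for every positive root $\alpha$, giving $(\lambda,\alpha_0^\vee) \leq (\mu,\alpha_0^\vee)$. Writing $\mu = \sum a_i \varpi_i$ with $0 \leq a_i < p$ and using $\sum_i (\varpi_i, \alpha_0^\vee) = (\rho,\alpha_0^\vee) = h-1$, we get $(\mu,\alpha_0^\vee) \leq (p-1)(h-1)$. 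Decomposing $\lambda = \lambda_0 + p\lambda_1$ with $\lambda_0 \in X^+_1$, since $(\lambda_0,\alpha_0^\vee)\geq 0$ we conclude
$$
(\lambda_1, \alpha_0^\vee) \;\leq\; \frac{(\lambda, \alpha_0^\vee)}{p} \;\leq\; \frac{(p-1)(h-1)}{p} \;<\; h-1.
$$
Taking the maximum over $\lambda \in \Lambda$ yields $a_1(\Lambda) < h-1$.

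Next, I would apply Lemma \ref{blowingup} to propagate this bound. That lemma gives $a_1(\Lambda(m)) \leq a_1(\Lambda) + 2(m+1)(h-1)$ for all $m \geq -1$. Combined with $a_1(\Lambda) < h-1$, this immediately yields
$$
a_1(\Lambda(m)) \;<\; (h-1) + 2(m+1)(h-1) \;=\; (2m+3)(h-1),
$$
which is the first displayed inequality (the $m=-1$ case reduces to the base bound, since $\Lambda(-1)=\Lambda$ for $\Lambda$ already an ideal).

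Finally, for the second inequality, I would just add the bounds for consecutive indices and compare with the hypothesis on $p$. Applying the first inequality to both indices $m-1$ and $m$,
$$
a_1(\Lambda(m-1)) + a_1(\Lambda(m)) \;<\; (2m+1)(h-1) + (2m+3)(h-1) \;=\; (4m+4)(h-1).
$$
The hypothesis $p \geq (2m+3)(h-1)$ rearranges to $2p - 2(h-1) \geq 2(2m+3)(h-1) - 2(h-1) = (4m+4)(h-1)$, equivalently $2p - 2h + 2 \geq (4m+4)(h-1)$. Chaining these two estimates gives $a_1(\Lambda(m-1)) + a_1(\Lambda(m)) < 2p - 2h + 2$, completing the proof.
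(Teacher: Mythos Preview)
Your proof is correct and follows exactly the approach the paper intends: combine the base bound $a_1(\Lambda)<h-1$ (already noted in the proof of Corollary~\ref{cortonewlemma}) with Lemma~\ref{blowingup} to get the first inequality, then add the cases $m-1$ and $m$ and compare with the hypothesis on $p$ for the second. The paper simply declares this ``an easy consequence of Lemma~\ref{blowingup}'' without writing out the arithmetic, so your write-up is a faithful expansion of that remark.
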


The positive characteristic version of Theorem \ref{KoszulforgeneralizedqSchur}
requires that the LCF holds on $\Reg\cap\Jan$. By Andersen-Janzten-Soergel \cite{AJS}
this formula holds if $p\gg 0$ (depending on $\Phi$). Some specific bounds are provided in \cite{Fiebig}. If $\Gamma$
is a finite non-empty poset ideal in the poset $\Resreg$, there is a finite dimensional algebra $B=U_\Gamma$,
which is a quotient of 
the distribution (Hopf) algebra $U=\text{Dist}(G)$ of $G$ such that $B$-mod identifies with the full subcategory of
finite dimensional rational $G$-modules which have composition factors $L(\gamma)$, $\gamma\in\Gamma$. 
This fact follows from \cite[\S\S2,3]{DS} (which was quoted \S8 for the similar quantum result), but
the reader may wish to consult the earlier treatment in \cite[\S3.2]{Do1} by Donkin (where $\text{Dist}(G)$
is denoted $\text{hy}(G)$ and called the hyperalgebra of $G$). 
Necessarily,
the algebra $B$ is a QHA with standard, costandard, and irreducible modules the corresponding
 modules  in $G$-mod.   Of course, the restricted enveloping algebra $u$
of $G$ is a Hopf subalgebra of $\text{Dist}(G)$.

We remark that the categories $\Bmod$, $\gr B$-mod, and
$\gr B$-grmod all have compatible dualities for the algebras $B=U_\Gamma$ below for any poset. See 
the parallel discussion in \S8.

\begin{thm}\label{charpversion} Assume that $G$ is a simple simply connected algebraic group over a field of characteristic $p\geq 2h-2$. Also, assume that the LCF holds for all $p$-regular weights in $\Jan$.  Let $\Gamma$ be a poset ideal in $\Resreg$, and let $N$ be the global dimension of $B:=U_\Gamma$ or $N=2$ in case
this global dimension is 1. (By Proposition \ref{charp}, $N\leq (h-1)|\Phi|$.)  Assume also that $p\geq 2N(h-1)-1$.  
Then $\gr B$ has a graded Kazhdan-Lusztig theory. In particular, the algebra $\gr B$ is a QHA with a Koszul
grading. Its standard modules  have the form  $\gr\Delta(\lambda)$,
$\lambda\in\Gamma$, and they are linear in $\gr B$-grmod. \end{thm}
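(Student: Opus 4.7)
The plan is to apply Theorem \ref{Koszultheorem} to $B=U_\Gamma$, following exactly the quantum template of Theorem \ref{KoszulforgeneralizedqSchur}. The conclusions of the present theorem will then be read off from Corollary \ref{grcorollary} (together with Theorem \ref{SKLtheorem}). The duality hypothesis is automatic, since $\Bmod$ inherits from $\Gmod$ the standard Chevalley duality fixing each irreducible. The (KL) hypothesis on $\Bmod$, with respect to the length function $\lambda\mapsto l(w_\lambda)$, is exactly a reformulation of the LCF on the regular ideal $\Gamma\subseteq\Reg\cap\Jan$; see Remark \ref{LCF} and \cite[Thm.~5.3, Cor.~5.4]{CPS1}. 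So the substantive task is to exhibit a pair $(A,\fa)$ satisfying Hypothesis \ref{hypothesisofsection6} and the projective-resolution condition of Hypothesis \ref{section7hypothesis}(2).

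Imitating \S8, I would set $\Lambda:=\Gamma(N-1)$, the iterated blow-up ideal defined just before \eqref{neededresolution}, and $A:=U_\Lambda$. Since $\Gamma\subseteq\Resreg$, all weights in $\Lambda$ are $p$-regular, and $A$ is a finite-dimensional split QHA via \cite[\S\S2,3]{DS}. Let $\fa$ be the image in $A$ of the restricted enveloping algebra $u$; since every non-regular block of $u$ annihilates $\Amod[\Lambda]$, one checks that $\fa$ is naturally isomorphic to the subalgebra $u'\subseteq u$ consisting of the $p$-regular blocks. The Hopf compatibility \eqref{compatable} is realized with $U=\dist(G)$ in the top row, and Hypothesis \ref{hypothesisofsection6}(4) holds over the algebraically closed base. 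By hypothesis the LCF holds throughout $\Reg\cap\Jan$, so \cite[Prop.~18.17]{AJS} implies that $u'$, hence $\fa$, is Koszul and in particular tightly graded. The radical-matching identity $(\rad\fa)A=\rad A$ is supplied by Corollary \ref{cortonewlemma}, whose hypothesis $a_1(\Lambda)<p-h+1$ is controlled by Lemma \ref{blowingup} and Corollary \ref{firstcorollary}, together with the numerical bound $p\geq 2N(h-1)-1$ and the standing assumption $p\geq 2h-2$.

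The hard part is Hypothesis \ref{section7hypothesis}(2). For each $\nu\in\Gamma$, Lemma \ref{newnewlemma}(b), applied with $\Gamma$ in the role of ``$\Lambda$'' and $\Gamma(0)$ in the role of ``$\Lambda(0)$'', identifies the $A$-projective cover of $L(\nu)$ with the truncation to $\Amod$ of $Q(\nu_0)\otimes L(\nu_1)^{(1)}$; this module is $\fa$-projective because $Q(\nu_0)|_u$ is projective and $L(\nu_1)^{(1)}|_u$ is trivial, so the tensor product restricts to a direct sum of copies of $Q(\nu_0)|_u$. Iterating step by step, the $i$-th syzygy has composition factors inside $\Gamma(i-1)$ and admits a projective cover of the same tensor-product form via Lemma \ref{newnewlemma}(b) applied to the pair $(\Gamma(i-1),\Gamma(i))$; each $P_i$ is thus $A$-projective (since $\Gamma(i)\subseteq\Lambda$) and simultaneously $\fa$-projective. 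The entire partial resolution $0\to\Omega_{N-1}\to P_{N-2}\to\cdots\to P_0\to L(\nu)\to 0$ therefore lives in $\Amod$ with every $P_i|_\fa$ projective, as required. The delicate dimensional control is the main obstacle: at each step $0\leq i\leq N-1$ one must verify $a_1(\Gamma(i-1))+a_1(\Gamma(i))<2p-2h+2$ to legitimize Lemma \ref{newnewlemma}(b), and it is precisely the bound $p\geq 2N(h-1)-1$ (together with Corollary \ref{firstcorollary}) that is calibrated to make this uniformly succeed. With all hypotheses verified, Theorem \ref{Koszultheorem} yields the (SKL$'$) property for $B$, and Corollary \ref{grcorollary} (combined with Theorem \ref{SKLtheorem}) produces the graded Kazhdan-Lusztig theory, the Koszul QHA grading on $\gr B$, and the linear standard modules $\gr\Delta(\lambda)$ for $\lambda\in\Gamma$.
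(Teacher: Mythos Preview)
Your strategy is exactly the paper's: verify Hypothesis \ref{section7hypothesis} for a suitable pair $(A,\fa)$, then invoke Theorem \ref{Koszultheorem} and Corollary \ref{grcorollary}. The duality and (KL) inputs are handled correctly. However, your index bookkeeping in the choice of $\Lambda$ is one step off, and this actually matters for the numerical bound.

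The paper sets $\Lambda=\Resreg(N-2)$ (taking $\Resreg(0)$ if $N=1$), not $\Gamma(N-1)$. With your choice, the hardest numerical check is at the top level: to get $A$-projectivity of the $P_i$ you ultimately need either $a_1(\Gamma(N-1))<p-h+1$ (via Lemma \ref{newnewlemma}(a)) or $a_1(\Gamma(N-2))+a_1(\Gamma(N-1))<2p-2h+2$ (via Lemma \ref{newnewlemma}(b)). Corollary \ref{firstcorollary} controls these only when $p\geq(2N+1)(h-1)$, which is strictly stronger than the theorem's hypothesis $p\geq 2N(h-1)-1$. With the paper's $\Lambda=\Resreg(N-2)$ the relevant bound is $a_1(\Resreg(N-2))<(2N-1)(h-1)$, and this matches the stated hypothesis. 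So following the quantum template \emph{literally} costs you one fattening step and the theorem as stated does not close. A second, related slip: your sentence ``each $P_i$ is thus $A$-projective (since $\Gamma(i)\subseteq\Lambda$)'' is not a valid inference---projectivity in $A_{\Gamma(i)}$-mod does not propagate upward to the larger $A_\Lambda$-mod. What you actually need is a \emph{single} application of Lemma \ref{newnewlemma}(a) with the full $\Lambda$ (giving $\Ext^1_G(Q(\mu_0)\otimes L(\mu_1)^{(1)},L(\tau))=0$ for all $\tau\in\Lambda$); the iterated level-by-level use of part (b) is not doing the work you attribute to it. Finally, note that with $\Lambda=\Gamma(N-1)$ there is no guarantee that $\Resreg(0)\subseteq\Lambda$, so the claimed isomorphism $\fa\cong u'$ needs an extra word (the paper's choice of $\Resreg(N-2)$ avoids this, since $\Resreg(0)\subseteq\Resreg(N-2)$ for $N\geq2$).
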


\begin{proof} Let $u'$ be the sum of the $p$-regular blocks of the restricted enveloping algebra $u$ of $G$. Since the LCF holds and $p>h$ (excluding $p=h=2$, a trivial case), the algebra $u'$ is a Koszul algebra by \cite[Prop. 18.17, p. 256]{AJS}. If $B$ has global dimension 0, the theorem holds trivially, so we may assume that $N\geq 2$. Define
$\Lambda=\Resreg(N-2)$.
 Set $A:=U_\Lambda$. If $ \lambda_0\in X^+_1\cap \Res$, the condition that $p\geq 2h-2$ means that  the $u'$-projective
cover $Q(\lambda_0)$ of $L(\lambda_0)$ is a rational $G$-module. In addition, $\Resreg(0)\subseteq
\Lambda$, so that $Q(\lambda_0)\in\Amod$.\footnote{The condition $\Resreg(0)\subseteq \Lambda$ is
similar to and plays the same role here as the assumption that $\Lambda$ be $e$-fat in \S8.} So $u'\to A$ is an injection, mapping
the Koszul algebra $u'$ isomorphically onto its image $\fa$ in $A$.  

The condition that $p\geq 2N(h-1)-1$ is readily checked, using Lemma \ref{blowingup}, to verify that $a_1(\Lambda)<p-h+1$, so Corollary
\ref{cortonewlemma} implies that $(\rad\fa)A=\rad A$. Thus,  Hypothesis \ref{hypothesisofsection6} holds for $(A,\fa)$. 
Also, Lemma \ref{newnewlemma} and Corollary \ref{firstcorollary} imply that condition (2) of Hypothesis \ref{section7hypothesis} holds. 
Note that $\Gamma\subseteq\Lambda$. By assumption, $\Bmod$ satisfies the (KL) property.  Theorem \ref{Koszultheorem} implies that $B$ satisfies
(SKL$'$). Now the theorem follows from the discussion of \S4 on graded Kazhdan-Lusztig theories.
 \end{proof}

\begin{rems}\label{finaleremarks} (a) We expect that the condition that $p\geq 2N(h-1)-1$ in 
Theorem \ref{charpversion} can be removed, just leaving the conditions that $p\geq 2h-2$ and that $p$ is large enough that the LCF holds for all $p$-regular weights in $\Jan$.
The improvement should be a 
consequence of our studies \cite{PS5.5} and \cite{PS5}  of integral versions of the algebras $\gr B$ and $p$-filtrations. The conclusion in Theorem \ref{charpversion} that $\gr B$ is a QHA can already be obtained in this
paper under weaker conditions; see Theorem \ref{alternativetheorem} below.

 Of course, current bounds \cite{Fiebig} on $p$ required for the validity of the LCF are much larger even than the current requirement that $p\geq 2N(h-1)-1$ in
Theorem \ref{charpversion}.

(b) Under the same hypotheses as those of Theorem \ref{charpversion}, the analogue of Corollary \ref{inverse} holds. In particular, the homological dual
$B^!:=\Ext^\bullet_G(B/\rad B, B/\rad B)$,
is a Koszul QHA algebra, isomorphic to the homological dual of $\gr B$, $B=U_\Gamma$. In addition, for
$\lambda\in\Gamma$, the
multiplicities $[\gr\Delta(\lambda):L(\mu)(n)]$ in the radical filtration of $\Delta(\lambda)$ are given explicitly
by inverse Kazhdan-Lusztig polynomial coefficients.\footnote{A similar multiplicity result for an explicit semisimple series 
  can be obtained
by reduction mod $p$ from the quantum radical filtration in \S8 for Weyl modules $\Delta(\lambda)$ with $\lambda\in\Jan$ a regular weight, when $p>h$ and the LCF holds on $\Jan$. Parity considerations imply the
needed complete reducibility.  (No claim is made here that this series is
the characteristic $p$ radical series.) As in Corollary \ref{inverse}(b) and its proof, this semisimple  series can be translated to an explicit semisimple series for Weyl modules associated to singular weights in $\Jan$.
} (A generic version of this radical filtration multiplicity result
had been proved in \cite{AK} and \cite{Lin3}.)  

(c) To obtain an analog for the Schur algebra $S(n,r)$, fix $n$ and choose
$p$ large enough so that the LCF holds for all regular weights in
the Jantzen region, for $G$ of type $SL_n$. Let $N=h|\Phi|=n^2(n-1)/2$, and also assume that
$p\geq 2N(n-1)-1$. (As in (a), we expect this condition can be replaced by $p\geq 2n-2$.)  Now let $r\geq p$ be any integer such that (in the notation of \S10),
the regular weights in $\Gamma_{n,r}$ are contained in $\Gamma_{\text{\rm res,reg}}$. (For example,
 $r-p<n$ works.)  Then the above result is applicable for any regular block (or product of blocks) of $B_{\Gamma_{n,r}}\cong
 S(n,r)$ (the latter isomorphism following as in \S10 for the $q$-Schur algebra).

Potentially, such a characteristic $p$-analogue of Theorem \ref{qSchurtheorem}  holds without any $p$-chamber regularity condition on the weights, or any condition on $p$, other than perhaps $p^2>r$---the hypothesis of the   James conjecture \cite{Jam}).
However,  such a generalization to singular weights is beyond our current methods. (Computer calculations by Carlson, found at
${\text{\rm {\tt{http://www.math.uga.edu/$\sim$jfc/schur.html}}}}$,
support the speculation that $\gr S(n,r)$ is Koszul under the hypothesis that $p^2>r$.)
\end{rems}

 In the following result, Theorem \ref{grBisQHA} is used to improve the
 bound on $p$ in Theorem \ref{charpversion} as regards the conclusion that $\gr B$ is a QHA.
 
\begin{thm}\label{alternativetheorem} Assume that $G$ is a simple simply connected algebraic group over a field of characteristic $p\geq 4h-5$. Also, assume that the LCF holds for all $p$-regular weights in $\Jan$.  Let $\Gamma$ be a poset ideal in $\Resreg$ and let  $B:=U_\Gamma$. Then $\gr B$ is a QHA with weight poset $\Gamma$ whose standard modules are the $\gr\Delta(\lambda)$,
$\lambda\in\Gamma$. \end{thm}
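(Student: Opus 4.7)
The plan is to apply Theorem~\ref{grBisQHA} to a carefully chosen pair $(A,\fa)$. Since Theorem~\ref{grBisQHA} outputs only the QHA structure on $\gr B$ together with the identification of its standard modules, its hypotheses are strictly weaker than those of Theorem~\ref{Koszultheorem}: it requires Hypothesis~\ref{hypothesisofsection6} plus the $\fa$-projectivity of the PIMs $P_A(\gamma)$ for $\gamma\in\Gamma$, but no length-$N$ resolution condition. This is precisely why a weaker hypothesis on $p$ suffices than in Theorem~\ref{charpversion}.

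First I would set $\Lambda := \Resreg(0)$, the poset ideal in $\Reg$ generated by the weights $\mathfrak{f}_p(\xi) = 2(p-1)\rho + w_0\xi_0 + p\xi_1$, $\xi\in\Resreg$. Put $A := U_\Lambda$ and let $\fa$ be the image of the restricted enveloping algebra $u$ under the quotient $\Dist(G)\twoheadrightarrow A$. Because $\Resreg \subseteq \Lambda$, in particular $X^+_1\cap\Resreg \subseteq \Lambda$, each $u'$-PIM $Q(\mu_0)$ (for $\mu_0\in X^+_1\cap\Resreg$) lifts to a rational $G$-module lying in $\Amod$; since the non-regular blocks of $u$ annihilate $\Amod$ (as $\Lambda\subseteq\Reg$), the map $u\to A$ identifies $\fa$ with the $p$-regular direct summand $u'$ of $u$. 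The algebra $B$ is then recovered as the QHA quotient of $A$ corresponding to the poset ideal $\Gamma\subseteq\Resreg\subseteq\Lambda$.

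Next, I would verify Hypothesis~\ref{hypothesisofsection6} for $(A,\fa)$. Conditions (1) and (4) are routine. For (2), the assumed LCF on $\Reg\cap\Jan$ combined with $p>h$ (automatic from $p\geq 4h-5$ for $h\geq 2$) gives, via \cite[Prop.~18.17]{AJS}, that $u'\cong\fa$ is Koszul and hence tightly graded; the Hopf-compatibility diagram (\ref{compatable}) is inherited from $(\Dist(G),u)$. For (3), Corollary~\ref{cortonewlemma} requires $a_1(\Lambda)<p-h+1$. The integer refinement of Corollary~\ref{firstcorollary} at $m=-1$ yields $a_1(\Resreg)\leq h-2$, and feeding this into the proof of Lemma~\ref{blowingup} gives $a_1(\Lambda) = a_1(\Resreg(0))\leq (h-2)+(2h-3)=3h-5$; the hypothesis $p\geq 4h-5$ then delivers $a_1(\Lambda)\leq p-h$ as required.

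Finally, I would show that for each $\gamma\in\Gamma$ the PIM $P_A(\gamma)$ is $\fa$-projective. The estimate $a_1(\Resreg)+a_1(\Resreg(0))\leq (h-2)+(3h-5)=4h-7 < 2p-2h+2$ (using $p\geq 4h-5$) allows Lemma~\ref{newnewlemma}(b) to identify $P_A(\gamma)$ with the $G$-module $Q(\gamma_0)\otimes L(\gamma_1)^{(1)}$. Restricted to $u$, this module is a direct sum of copies of the $u$-PIM $Q(\gamma_0)$, since the Frobenius twist $L(\gamma_1)^{(1)}$ is $u$-trivial; hence $P_A(\gamma)$ is $u$-projective and therefore $\fa$-projective. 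All the hypotheses of Theorem~\ref{grBisQHA} being in place, the theorem follows at once. The main obstacle in the argument is the delicate numerical bookkeeping needed to achieve the sharp bound $p\geq 4h-5$ rather than the weaker $p\geq 4h-4$ that a bald application of Corollary~\ref{firstcorollary} would produce: this hinges on extracting the integer refinement $a_1(\Resreg)\leq h-2$ from the strict inequality $a_1(\Resreg)<h-1$ and propagating it through Lemma~\ref{blowingup} to save exactly the one unit needed.
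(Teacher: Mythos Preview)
Your proof is correct and follows essentially the same approach as the paper: set $\Lambda=\Resreg(0)$, $A=U_\Lambda$, take $\fa\cong u'$, verify Hypothesis~\ref{hypothesisofsection6} (using Corollary~\ref{cortonewlemma} and the bound $a_1(\Lambda)\leq 3h-5$), check via Lemma~\ref{newnewlemma}(b) that each $P_A(\gamma)\cong Q(\gamma_0)\otimes L(\gamma_1)^{(1)}$ is $\fa$-projective, and invoke Theorem~\ref{grBisQHA}. The paper's own proof is a terse three lines that delegates all of this to the proof of Theorem~\ref{filteringWeyl}(a), where the same numerical estimates and identification of $P_A(\gamma)$ appear; you have simply written them out explicitly.
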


We postpone the proof of this theorem until the following result is established. 

 \begin{thm}\label{filteringWeyl} (a) Assume that $G$ is a simple simply connected algebraic group over a field of characteristic $p\geq 4h-5$. Assume also that
the LCF holds for all $\lambda\in\Gamma_{\text{\rm res,reg}}\subseteq\Jan$. For any
 $\lambda\in\Gamma_{\text{\rm res,reg}}$, there is a positive grading
$\Delta(\lambda)=\bigoplus_{n\geq 0}\Delta(\lambda)_n$
of $\Delta(\lambda)$ as a module for the Koszul algebra $u' $. As a graded $u'$-module, $\Delta(\lambda)$
is generated in grade 0. As a $u'/\rad u'$-module
$\Delta(\lambda)_0\cong L(\lambda)\cong L(\lambda_0)\otimes L(\lambda_1)^{(1)}.$

(b) Additionally, assume that the full (i.~e., not just the regular part $u'$) restricted enveloping algebra $u$ is Koszul (which holds for $p\gg h$ by \cite{Riche}).
Then for any $\lambda\in\Jan$, $\Delta(\lambda)$ has a positive grading as a module for $u$. As a graded $u$-module,
$\Delta(\lambda)$ is generated in grade 0. Also, as a $u/\rad u$-module, $\Delta(\lambda)_0\cong L(\lambda)\cong L(\lambda_0)\otimes L(\lambda)^{(1)}$ (the irreducible
$G$-module of highest weight $\lambda$).
\end{thm}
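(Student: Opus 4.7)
My strategy is to deduce both parts from Theorem~\ref{MainInfinitesimalThm} by exhibiting a pair $(A,\fa)$ satisfying Hypothesis~\ref{hypothesisofsection6} for which $P_A(\lambda)$, the projective cover of $L(\lambda)$ in $\Amod$, restricts to a projective $\fa$-module.

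For part (a) I would take $\Lambda := \Resreg$, set $A := U_{\Lambda(0)}$, and let $\fa$ be the image of $u'$ in $A$. Koszulity of $u'$ follows from the LCF hypothesis together with \cite[Prop.~18.17]{AJS}. The map $u' \to \fa$ is an isomorphism: for $\xi_0 \in X_1^+ \cap \Resreg$ the highest weight $\mathfrak{f}_p(\xi_0)$ of $Q(\xi_0)$ lies in $\Lambda(0)$, so $Q(\xi_0) \in \Amod$, giving faithfulness; normality of $\fa$ in $A$ is inherited from that of $u$ in $\dist(G)$, noting that on $\Amod$ (all composition factors $p$-regular) $u$ and $u'$ act through the same quotient. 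The equality $(\rad \fa)A = \rad A$ reduces by Corollary~\ref{cortonewlemma} to the inequality $a_1(\Lambda(0)) < p - h + 1$; combining the bound $a_1(\Resreg) \leq h - 2$ from the discussion preceding Corollary~\ref{cortonewlemma} with the strict version of Lemma~\ref{blowingup} at $m = 0$ gives $a_1(\Lambda(0)) \leq 3h - 5$, so this inequality is precisely $p \geq 4h - 5$, our hypothesis. These same bounds force $a_1(\Lambda) + a_1(\Lambda(0)) < 2p - 2h + 2$, so Lemma~\ref{newnewlemma}(b) identifies $P_A(\lambda) = Q(\lambda_0) \otimes L(\lambda_1)^{(1)}$. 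Since the Frobenius twist $L(\lambda_1)^{(1)}$ is $G_1$-trivial, this module restricts to $\dim L(\lambda_1)$ copies of $Q(\lambda_0)$ as a $u$-module, hence is $u'$-projective. Theorem~\ref{MainInfinitesimalThm} applied with $\Theta := \{\nu \in \Lambda(0) : \nu \leq \lambda\}$ then endows $\Delta(\lambda) = P_\Theta(\lambda)$ with an $\fa$-grading generated in grade $0$ whose grade-$0$ piece is $A_0$-stable; since $(\rad \fa)\Delta(\lambda) = \rad \Delta(\lambda)$ and $\Delta(\lambda)$ has simple $A$-head $L(\lambda)$, this grade-$0$ piece must coincide with $L(\lambda) \cong L(\lambda_0) \otimes L(\lambda_1)^{(1)}$ as an $\fa_0$-module.

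For part (b) the strategy is parallel, with $u$ in place of $u'$: one chooses a sufficiently rich finite poset ideal $\Lambda \subseteq X^+$ containing $\lambda$ and the weights $\mathfrak{f}_p(\xi_0)$ for all $\xi_0 \in X_1^+$ (so that every PIM for $u$ is an $A$-module), sets $A := U_{\Lambda(0)}$, and takes $\fa$ to be the image of $u$ in $A$. Koszulity of $\fa$ now comes from the added Koszulity hypothesis on $u$; injectivity of $u \to \fa$, normality of $\fa$, the identity $(\rad \fa)A = \rad A$, and the $\fa$-projectivity of $P_A(\lambda) = Q(\lambda_0) \otimes L(\lambda_1)^{(1)}$ are verified by the same chain of arguments as in part (a), and Theorem~\ref{MainInfinitesimalThm} supplies the required $u$-grading with grade-$0$ piece $L(\lambda) \cong L(\lambda_0) \otimes L(\lambda_1)^{(1)}$.

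The principal obstacle is the interlocking numerology controlling the bounds on $a_1$. In part (a), both $(\rad \fa)A = \rad A$ (via Corollary~\ref{cortonewlemma}) and $\fa$-projectivity of $P_A(\lambda)$ (via Lemma~\ref{newnewlemma}(b)) impose upper bounds on $a_1(\Lambda(0))$, and the tighter of these, regulated by Lemma~\ref{blowingup}, is exactly what pins the threshold at $p \geq 4h - 5$. In part (b) the difficulty is sharper: Jantzen-region weights outside $\Gamma_{\text{\rm res}}$ can have $(\lambda_1, \alpha_0^\vee)$ as large as $p - h + 1$, so an uncritical enlargement of $\Lambda$ violates the radical condition; one must therefore either truncate $\Lambda$ in a manner tailored to the individual $\lambda$, or lean on the fact that $u$ is fully Koszul (rather than merely $u'$) to absorb the weaker slack. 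Once Hypothesis~\ref{hypothesisofsection6} is in place, the appeal to Theorem~\ref{MainInfinitesimalThm} is mechanical.
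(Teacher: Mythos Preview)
Your argument for part (a) is essentially identical to the paper's: the same choice $A=U_{\Resreg(0)}$ with $\fa$ the image of $u'$, the same verifications via Lemma~\ref{newnewlemma}(b), Corollary~\ref{cortonewlemma}, Lemma~\ref{blowingup}, and Corollary~\ref{firstcorollary}, and the same appeal to Theorem~\ref{MainInfinitesimalThm}. You even supply the extra detail (left implicit in the paper) explaining why the grade-$0$ piece is $L(\lambda)$.

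For part (b), the paper's proof consists of the single sentence ``The proof is similar.'' You go further and correctly identify the obstacle: for $\lambda\in\Jan$ one can have $(\lambda_1,\alpha_0^\vee)$ as large as $p-h+1$, so the inequality $a_1(\Lambda(0))<p-h+1$ needed for Corollary~\ref{cortonewlemma} (and the analogous bound for Lemma~\ref{newnewlemma}(b)) cannot be obtained from $p\geq 4h-5$ alone by a uniform choice of $\Lambda$. Your two suggested fixes, however, are not worked out: tailoring $\Lambda$ to the individual $\lambda$ does not obviously help, since any poset ideal containing $\lambda$ already forces $a_1(\Lambda)$ to be as large as $p-h+1$; and ``leaning on'' full Koszulity of $u$ does not touch the numerical radical condition at all. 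So your proposal for (b) flags a genuine issue but does not close it --- though, to be fair, the paper gives no more detail here than you do, and the parenthetical ``which holds for $p\gg h$'' in the hypothesis of (b) may be doing silent work.
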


\begin{proof} (a): The condition $p\geq 4h-5\geq 2h-2$ guarantees  
  that the $u$-projective cover $Q(\lambda_0)$ of $L(\lambda_0)$ is a $G$-module for  $\lambda_0\in X^+_1$.
 (See the discussion above Lemma \ref{newnewlemma}.) Let $\Gamma=\Resreg$ and $\Lambda=\Gamma(0)$.  Also, Lemma \ref{newnewlemma}(b) shows that $Q=Q(\lambda_0)\otimes
 L(\lambda_1)^{(1)}$ is the projective cover of $L(\lambda)$ in $A_{\Lambda}$-mod for any $\lambda\in\Gamma$.  Observe that $a_1(\Gamma) + a_1(\Lambda)= a_1(\Gamma)+a_1(\Lambda(0)) < 2p-2h+2$ by Corollary \ref{firstcorollary} since $p\geq 4h-5
 \geq 3h-3$. So, by Lemma \ref{newnewlemma}(b), $Q:=Q(\lambda_0)\otimes L(\lambda_1)^{(1)}$ is the projective
 cover in $A:=A_\Lambda$-mod of $L(\lambda)$. 
  Of course, $Q$ is also $\fa$-projective, where $\fa$ is the isomorphic image of $u'$ in $A$.
 
 Finally, to apply Theorem \ref{MainInfinitesimalThm} with $A=A_{\Lambda}$ and $\lambda\in \Gamma$, it is necessary 
 to check that $(\rad a)A=\rad A$ (which is part (3) of Hypothesis \ref{hypothesisofsection6}). This follows from Corollary \ref{cortonewlemma}, since $a_1(\Lambda)< a_1(\Gamma)
 +2(h-1)\leq h-2+2(h-1)= (4h-5) - h+1\leq p-h+1$ by Lemma \ref{blowingup}. This proves (a).
 
 (b): The proof is similar.\end{proof}

\noindent
{\it  Proof of Theorem \ref{alternativetheorem}:} 
Let $\Lambda=\Resreg(0)$ and let $\Gamma$ be the given poset ideal. The proof of
Theorem \ref{filteringWeyl} shows Hypothesis \ref{hypothesisofsection6} holds for  $(A,\fa)$,
where $A=U_\Lambda$, provided $p\geq 4h-5$. The theorem follows from Theorem \ref{grBisQHA}. \qed

\medskip
In both the above two proofs, it was only needed that $\fa$ be tightly graded. We expect Theorem \ref{filteringWeyl}(a) can be
improved to include all $p$-regular dominant weights (at least when $p\geq 2h-2$ and the LCF holds in the Jantzen region).

\end{document}